\documentclass{article}
\usepackage[utf8]{inputenc}
\usepackage[english]{babel}
\usepackage{latexsym}
\usepackage{amsfonts}
\usepackage{soul}
\usepackage{mathrsfs}
\usepackage{amsthm}
\usepackage[all]{xy}
\usepackage{mathtools}
\usepackage{color}
\usepackage{amssymb, mathrsfs, amsfonts, amsmath}
\usepackage{mathtools}
\usepackage{graphicx}
\usepackage[font=small,labelfont=bf]{caption}
\usepackage{epstopdf}
\usepackage[pdfpagelabels,hyperindex]{hyperref}
\usepackage{xcolor}
\usepackage{float}
\usepackage{pgfplots}
\usepackage{listings}
\usepackage{longtable}
\usepackage{mathrsfs}
\usepackage{amsbsy}
\usepackage{tikz}
\usetikzlibrary{calc}
\usepackage{array,tabularx,float, multicol,multirow,graphicx}
\usetikzlibrary{calc,intersections,arrows.meta,through,patterns}
\usetikzlibrary{decorations.pathmorphing}
\usepackage{pgfplots}
\pgfplotsset{compat=1.18}
\usetikzlibrary{calc,through,intersections,patterns,patterns.meta}
\usepackage{wasysym}
\usepackage{stackrel}
\pgfplotsset{compat=newest}
\usetikzlibrary{calc,through,intersections,patterns,patterns.meta}
\usepackage{pgfplots}
\usepgfplotslibrary{groupplots}
\pgfplotsset{compat=1.18}
\usepackage{enumitem}

\newtheorem{theorem}{Theorem}[section]
\newtheorem{thm}{Theorem}[section]
 \newtheorem{cor}[thm]{Corollary} 
 \newtheorem{lem}[thm]{Lemma}
 \newtheorem{remark}[thm]{Remark}
 \newtheorem{prop}[thm]{Proposition}

 \newtheorem{definition}[thm]{Definition}
 \newcommand{\grad}{\nabla\,}
 \newcommand{\diver}{{{\rm{div}}}}
 
\usepackage{graphicx} 
\usepackage{eucal}
\usepgfplotslibrary{groupplots}

\usepackage[english]{babel}

\usepackage[letterpaper,top=2cm,bottom=2cm,left=3cm,right=3cm,marginparwidth=1.75cm]{geometry}

\usepackage{amsmath}
\usepackage{graphicx}

\providecommand{\keywords}[1]{%
  \small\textbf{Keywords ---} #1
}

\title{Barta Theorem for the $p$-Laplacian \\and Geometric Applications}
\author{Paulo Henryque C. Silva \thanks{Universidade Federal do Ceará. Departamento de Matemática. Fortaleza - CE, Brazil. \texttt{paulohenryque@alu.ufc.br}}%
}

\begin{document}
\maketitle
    
\begin{abstract}
\noindent In this article, we develop a Barta-type formulation for the $p$-Laplacian on Riemannian manifolds, extending the approach of Cheung-Leung \cite{cheung} and  Bessa--Montenegro \cite{BessaMontenegro}  from the linear to the nonlinear setting. This framework yields sharp lower bounds for the $p$-fundamental tone without any assumptions on boundary regularity. As applications, we obtain nonlinear extensions of Cheng’s eigenvalue comparison theorem and  the Cheng--Li--Yau estimate for $p \geq 2$ in the context of minimal immersions. In particular, under the above assumptions, the domain $\Omega$ is $p$-stable for the Schr\"odinger-type operator associated with the potential $\mathcal{V} = ||A||^{p}$, where $A$ denotes the second fundamental form of the minimal immersion. In addition, we establish a lower bound for the $p$-fundamental tone in the setting where the immersion has locally bounded mean curvature. Finally, we provide a Kazdan--Kramer type characterization of the $p$-fundamental tone, offering a unified and geometric perspective on spectral bounds for the operator $p$-Laplacian.
\end{abstract}

\textbf{AMS Subject Classification: 47J10, 35P30, 53A10, 58J50.} 

 \keywords{$p$-Laplacian; $p$-Barta inequality; $p$-Cheng's comparison theorem, $p$-stability.}

\section{Introduction} 
Quasilinear operators play a fundamental role in both mathematical analysis
and physical modeling, due to their broad range of applications. They naturally
arise in the description of several nonlinear phenomena, including non-Newtonian
flows, nonlinear diffusion processes, and saturation effects. In this work,
we investigate the quasilinear operator known as the \emph{$p$-Laplacian}, on a Riemannian $m$-manifold $(M,g)$, defined by
\begin{equation}
    \triangle_p \varphi \coloneq \operatorname{div}(|\nabla \varphi |^{p-2}\nabla \varphi),
\end{equation}
with $p\in(1,\infty)$. In the case $p=2$, we recover the classical Laplace--Beltrami operator on $M$. 
When $p>2$, the $p$-Laplacian is associated with \emph{slow diffusion} phenomena, in which the diffusivity increases with the magnitude of the gradient $|\nabla \varphi|$. For $p \in (1,2)$, the $p$-Laplacian describes \emph{fast diffusion} processes, where the diffusivity exhibits an inverse dependence on $|\nabla \varphi|$. Standard references for the foundational study of the
$p$-Laplacian include \cite{anane, le, lindqvist, lindqvist92}.

Let $(M,g)$ be a Riemannian manifold and $\Omega \subset M$ be a bounded domain, possibly with boundary $\partial \Omega$. The \emph{eigenvalue problem} associated with the
$p$-Laplacian $\triangle_p$ is given by
\begin{equation}\label{Dirichleteigenvalue}
\begin{cases}
\triangle_p \varphi = -\lambda\,|\varphi|^{p-2}\varphi 
& \text{in} \ \Omega,\\
\varphi = 0 
& \text{on} \ \partial\Omega, \quad \text{if} \quad \partial \Omega \neq \emptyset.
\end{cases}
\end{equation}
The real number $\lambda$ is called an \emph{eigenvalue of the $p$-Laplacian} $\triangle_p$ on $M$. For $p \in (1,\infty)$, it follows from \cite{lindqvist} that the Dirichlet problem
\eqref{Dirichleteigenvalue} admits a non-trivial solution, and from \cite[Theorem 4.5]{le} that any such
solution has regularity only of class $\mathcal{C}^{1,\alpha}(\Omega)$ for some
$\alpha \in (0,1)$. Therefore, it is natural to interpret solutions to \eqref{Dirichleteigenvalue} in the weak sense. More precisely, we say that $\varphi \in W^{1,p}_0(\Omega)$ is a \emph{weak solution} to \eqref{Dirichleteigenvalue} if
\begin{equation}\label{eqweak}
\int_{\Omega} \left\langle |\grad\varphi |^{p-2} \grad \varphi, \grad \eta \right\rangle d\mu = \lambda \int_{\Omega} | \varphi |^{p-2} \varphi \eta \, d\mu,  
\end{equation}
for any test function $\eta \in \mathcal{C}_0^{\infty}(\Omega)$. Here we are writing the metric as $g = \langle \cdot , \cdot \rangle$ and  $d\mu$ denotes the Riemannian volume measure induced by the metric $g$ on $\Omega$. The Sobolev space $\mathrm{W}^{1,p}_0(\Omega)$ denotes the completion of $\mathcal{C}_0^{\infty}(\Omega)$ with respect to the norm 
\begin{equation*}
    \| \varphi \|_{1,p} = \left( \int_{\Omega} \left( |\varphi|^p + |\nabla \varphi|^p \right) d\mu \right)^{1/p}.
\end{equation*}
Let $\lambda_{1,p}^{\mathcal{D}}(\Omega)$ be the infimum of all values $\lambda$ for which \eqref{Dirichleteigenvalue} admits a nontrivial solution, whenever $\partial \Omega \neq \emptyset$. The Dirichlet spectrum of the $p$-Laplacian consists of a nondecreasing sequence $\{\lambda_{k,p}(\Omega)\}_{k \in \mathbb{N}}$, with $\lambda_{k,p}(\Omega) \to +\infty$ as $k \to \infty$, obtained via the Ljusternik--Schnirelman principle, see   \cite{garzia-peral}. Moreover, the first eigenvalue is isolated,  simple,   admits an eigenfunction of fixed sign, and possesses a variational characterization (see \cite{anane, lindqvist}),
\begin{equation}\label{eq:Rayleigh quotient}
    \lambda_{1,p}^{\mathcal{D}}(\Omega)
    = \inf \left\{ \frac{\displaystyle \int_{\Omega} |\nabla \varphi |^p d\mu} {\displaystyle \int_{\Omega} |\varphi|^p d\mu}; \, \varphi \in \mathrm{W}^{1,p}_0(\Omega), \, \varphi \not\equiv 0
    \right\}.
\end{equation} Clearly,  \eqref{eq:Rayleigh quotient} is valid for  arbitrary open subsets $\Omega \subseteq M$ and defines what is called the \emph{$p$-fundamental tone} $\lambda_{1,p}(\Omega)$ of  $\Omega $, although $\Omega$ may not have Dirichlet eigenvalues.
When $\Omega \subset M$ has a nonempty and relatively compact boundary, the $p$-fundamental tone coincides with the first Dirichlet eigenvalue,  $\lambda_{1,p}(\Omega) = \lambda_{1,p}^{\mathcal{D}}(\Omega).$ Several significant results involving the $p$-fundamental tone have been obtained in recent years. For the case $p=2$, we refer to \cite{BessaMontenegro, BessaMontenegro2003}, while for $p \in (2,\infty)$ we mention \cite{LimaMontenegroSantos2010, Mao2014, matei}. A rather interesting way to think about equation \eqref{Dirichleteigenvalue},  $p=2$,  is by recalling the problem of \textit{vibrating drums}: let $\mathcal{V} \colon \Omega \times \mathbb{R}_+ \to \mathbb{R}$ describe the vibration of a drumhead satisfying the wave equation
\begin{equation}\label{SoundEquation}
\begin{cases}
\mathcal{V}_{tt} = \alpha^2 \triangle \mathcal{V}
& \text{in } \Omega \times \mathbb{R}_+,\\
\mathcal{V} = 0
& \text{on } \partial\Omega.
\end{cases}
\end{equation}
Here, the constant $\alpha$ depends on the physical characteristics of the membrane. 
Looking for solutions of the form $\mathcal{V}(x,t) = \mathcal{T}(t)\vartheta(x)$, we obtain an ordinary Sturm--Liouville equation for $\mathcal{T}(t)$ and a Dirichlet eigenvalue problem \eqref{Dirichleteigenvalue} for the function $\vartheta(x)$. 
In this context, $\lambda_{1}^{\mathcal{D}}(\Omega)$ represents exactly the lowest vibration frequency of the membrane (the deepest sound emitted by the drum). 
The following theorem then addresses a bound on the deepest sound emitted by the drum through an admissible test function.

\begin{theorem}[Barta's inequality \cite{Barta1937}]\label{thm:barta}
Let \(\Omega\) be a bounded domain in a  Riemannian $m$-manifold $M$ with piecewise smooth non-empty boundary \(\partial \Omega\), and let \(\eta \in \mathcal{C}^{2}(\Omega)\cap \mathcal{C}^{0}(\overline{\Omega})\) satisfy \(\eta>0\) in \(\Omega\) and \(\eta|_{\partial \Omega}=0\). Let \(\lambda_{1}^{\mathcal{D}}(\Omega)\) denote the first Dirichlet eigenvalue of \(M\). Then
\begin{equation}\label{eq:barta}
\inf_{\Omega}\!\left\{-\frac{\triangle \eta}{\eta} \right\} \leq
\lambda_{1}^{\mathcal{D}}(\Omega) \leq  \sup_{\Omega}\!\left\{-\frac{\triangle \eta}{\eta} \right\}. 
\end{equation}
Moreover, equality holds if and only if \(\eta\) is a first eigenfunction of \(\Omega\).
\end{theorem}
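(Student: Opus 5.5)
The plan is the classical Barta argument: compare $\eta$ with a positive first eigenfunction $u$ through the vector field $X=-\nabla\log\eta$. Let $u>0$ be the first Dirichlet eigenfunction, so $\triangle u=-\lambda_1^{\mathcal D}(\Omega)u$ and $u|_{\partial\Omega}=0$; it is smooth in $\Omega$, and by the piecewise smooth boundary assumption it lies in $\mathcal C^0(\overline\Omega)\cap W^{1,2}_0(\Omega)$. Set
\begin{equation*}
X=-\nabla\log\eta=-\frac{\nabla\eta}{\eta},
\end{equation*}
which is $\mathcal C^1$ in $\Omega$. A direct computation gives
\begin{equation*}
\operatorname{div}X-|X|^2=-\frac{\triangle\eta}{\eta}.
\end{equation*}

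\textbf{Key identity.} Multiply by $u^2$ and integrate over an exhaustion $\Omega_\varepsilon\Subset\Omega$ by smooth subdomains. Integration by parts gives
\begin{equation*}
\int u^2\operatorname{div}X=\int_{\partial\Omega_\varepsilon}u^2\langle X,\nu\rangle-\int 2u\langle\nabla u,X\rangle.
\end{equation*}
Completing the square, $-2u\langle\nabla u,X\rangle-u^2|X|^2=|\nabla u|^2-|\nabla u+uX|^2$. Hence
\begin{equation*}
\int_{\Omega_\varepsilon}u^2\Big(-\frac{\triangle\eta}{\eta}\Big)=\int_{\Omega_\varepsilon}|\nabla u|^2-\int_{\Omega_\varepsilon}|\nabla u+uX|^2+\int_{\partial\Omega_\varepsilon}u^2\langle X,\nu\rangle .
\end{equation*}
Letting $\varepsilon\to0$ and using $\int|\nabla u|^2=\lambda_1^{\mathcal D}\int u^2$, we obtain
\begin{equation*}
\lambda_1^{\mathcal D}\int u^2\ \ge\ \inf_\Omega\Big(-\frac{\triangle\eta}{\eta}\Big)\int u^2 ,
\end{equation*}
since the square term is nonnegative. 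This is the lower bound. Equality forces $\nabla u=-uX=u\nabla\log\eta$, so $\nabla(u/\eta)=0$ and $\eta$ is a multiple of $u$.

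\textbf{Main obstacle.} The delicate step is the boundary term $\int_{\partial\Omega_\varepsilon}u^2\langle\nabla\eta,\nu\rangle/\eta$, since $\eta\to0$ at the boundary. I would first take $\Omega_\varepsilon=\{\eta>\varepsilon\}$ for regular values $\varepsilon$ (Sard). On this set $\nu=-\nabla\eta/|\nabla\eta|$, so $\langle X,\nu\rangle=|\nabla\eta|/\eta\ge0$ and the boundary term has a favorable sign. This direction, however, gives the inequality we do not need: it only shows that the other terms are bounded above by the left side. The cleaner approach is therefore to avoid boundary terms altogether. Multiply by $\phi^2$ with $\phi\in C_0^\infty(\Omega)$ and let $\phi\to u$ in $W^{1,2}_0$. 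Cutoffs $\phi=u\,\psi_\varepsilon$ work here, because the inequality
\begin{equation*}
\int\phi^2\Big(-\frac{\triangle\eta}{\eta}\Big)\le\int|\nabla\phi|^2
\end{equation*}
holds for every compactly supported $\phi$, as the completed square shows. Passing to the limit in $W^{1,2}_0$ then gives the lower bound, since the infimum is a constant bound.

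\textbf{Upper bound.} Assume $\sup_\Omega(-\triangle\eta/\eta)=c<\infty$. Then $-\triangle\eta\le c\,\eta$, and $\eta\in\mathcal C^0(\overline\Omega)$ vanishes on $\partial\Omega$. Use $\eta$ itself as a test function in the Rayleigh quotient, integrating by parts on $\{\eta>\varepsilon\}$ applied to $(\eta-\varepsilon)^+$:
\begin{equation*}
\int|\nabla(\eta-\varepsilon)^+|^2=\int(\eta-\varepsilon)^+(-\triangle\eta)\le c\int\eta\,(\eta-\varepsilon)^+ .
\end{equation*}
There is no boundary term, because $(\eta-\varepsilon)^+$ vanishes on the level set. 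Letting $\varepsilon\to0$, Fatou's lemma and monotone convergence give $\eta\in W^{1,2}_0(\Omega)$ with $\int|\nabla\eta|^2\le c\int\eta^2$. Hence $\lambda_1^{\mathcal D}\le c$.

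\textbf{Equality.} If equality holds in the upper bound, $\eta$ attains the Rayleigh quotient, so it is a first eigenfunction by simplicity. Conversely, if $\eta$ is a first eigenfunction, then $-\triangle\eta/\eta$ is constantly equal to $\lambda_1^{\mathcal D}$, and both inequalities are equalities.
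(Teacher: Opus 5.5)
Your proposal is correct. For the lower bound it follows the paper's route in vector-field notation. With $X=-\nabla\log\eta$, your completed square $|\nabla\phi+\phi X|^2$ is exactly the $p=2$ Picone integrand $|\nabla\phi|^2-\langle\nabla\eta,\nabla(\phi^2/\eta)\rangle$ from \eqref{piconeclassico}. Integrating $\phi^2\operatorname{div}X$ by parts for $\phi\in\mathcal{C}_0^\infty(\Omega)$ amounts to testing $\triangle\eta$ against $\phi^2/\eta$ and then using density, as in the proof of Theorem~\ref{thm:pbarta}. This is also why that half needs neither boundary regularity nor $\eta|_{\partial\Omega}=0$.

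The upper bound is where you go beyond the paper, which only cites Barta for it; Theorem~\ref{thm:pbarta} generalizes only the lower half. Your truncation argument, which inserts $\eta$ itself into the Rayleigh quotient, works. It is exactly where $\eta\in\mathcal{C}^0(\overline{\Omega})$ and $\eta|_{\partial\Omega}=0$ are used: they make $\{\eta\ge\varepsilon\}$ compact in $\Omega$. The bound $\int_{\{\eta>\varepsilon\}}|\nabla\eta|^2\le c\int\eta^2$ then gives $\nabla\eta\in L^2$ and $(\eta-\varepsilon)^+\to\eta$ in $W^{1,2}$, so $\eta\in W^{1,2}_0(\Omega)$. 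Equality in the upper bound makes $\eta$ a Rayleigh minimizer, hence a first eigenfunction.

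Two details should be tidied.

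\textbf{Equality in the lower bound.} You derive $\nabla u=-uX$ inside the exhaustion identity that you later abandon. Its boundary term $\int_{\{\eta=\varepsilon\}}u^2|\nabla\eta|/\eta$ is never controlled. Redo the argument in the compact-support setting, as the paper does for Theorem~\ref{thm:pbarta}. Suppose $\lambda_1^{\mathcal D}(\Omega)=c_0:=\inf_\Omega(-\triangle\eta/\eta)$ and $\mathcal{C}_0^\infty(\Omega)\ni\phi_n\to u$ in $W^{1,2}_0(\Omega)$. Then $\int|\nabla\phi_n+\phi_nX|^2\le\int|\nabla\phi_n|^2-c_0\int\phi_n^2\to0$. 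Fatou along an a.e.\ convergent subsequence gives $\nabla u+uX=0$ a.e., i.e.\ $\nabla(u/\eta)=0$.

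\textbf{Regular level sets.} Do not rely on Sard. For real-valued functions on an $m$-manifold it requires $\mathcal{C}^m$ regularity, so for $\eta\in\mathcal{C}^2$ it is not available once $m\ge3$. You do not need regular level sets, because $(\eta-\varepsilon)^+$ is Lipschitz with compact support in $\Omega$. It can be inserted directly into the weak identity $\int\langle\nabla\eta,\nabla\zeta\rangle=-\int\zeta\,\triangle\eta$. This gives $\int_{\{\eta>\varepsilon\}}|\nabla\eta|^2=\int(\eta-\varepsilon)^+(-\triangle\eta)$ with no boundary term.
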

Figure \eqref{figbarta} illustrates the importance of selecting appropriate test functions $\eta$ in Theorem \eqref{thm:barta} to obtain effective control of the first Dirichlet eigenvalue of the Laplacian

\begin{figure}[htb!]
		\centering
		\def\r{7}
		\begin{tikzpicture}[scale=2.2,>=stealth]
			\def\u{1}
			\def\v{1}
			
			\path[draw,name path=l1] (-0,0) -- (2,0);
			\path[draw,name path=l2] (0,0) -- (0,1.5*\u);
			\path[draw,name path=l3,dashed,very thin] (2,0) -- (2,1.5*\u);
			\path[draw,domain=0.13:1.88,blue,name path=para,samples=500] plot ({\x},{-0.3+2.38*pow(\x-1,2)});
			
			\draw[] (0,0) to[out=0,in=120] (0.5,0.6) to[out=-60,in=180] (0.8,0.3) to[out=0,in=180] (1.3,0.9) to[out=0,in=180] (2,0);
			\fill (2,0) circle (0.5pt);
			
    			\node (t1) at (0.42, 1.38) {{\scriptsize $-\frac{\triangle \eta}{\eta}(y)$}};
			\node (t2) at (1.3,1) {{\scriptsize $\eta(y)$}};
			
			\begin{scope}[xshift=2.5cm]
				\path[draw,name path=l1] (-0,0) -- (2,0);
				\path[draw,name path=l2] (0,0) -- (0,1.5*\u);
				\path[draw,name path=l3,dashed,very thin] (2,0) -- (2,1.5*\u);
				\path[draw,domain=0.1:1.88,blue,name path=para,samples=500] plot ({\x},{0.7+1.0*pow(\x-1,2)});

				\path[draw,domain=0:2,name path=para1,samples=500,y=0.4cm] plot ({\x},{\x*(2-\x)});
				
				\node (t3) at (0.44,1.38) {{\scriptsize $-\frac{\triangle \eta}{\eta}(y)$}};
				\node (t4) at (1,0.48) {{\scriptsize $\eta(y)$}};
			\end{scope}
			
			\begin{scope}[xshift=5cm]
				\path[draw,name path=l1] (-0,0) -- (2,0);
				\path[draw,name path=l2] (0,0) -- (0,1.5*\u);
				\path[draw,name path=l3,dashed,very thin] (2,0) -- (2,1.5*\u);
				\path[draw,domain=0:2,blue,name path=para,samples=500] plot ({\x},{1.5});
				
				\path[draw,domain=0:2,name path=para1,samples=500,y=0.6cm] plot ({\x},{\x*(2-\x)});
				
				\node (t5) at (1.0,1.38) {{\scriptsize $-\frac{\triangle \eta}{\eta}(y)$}};
				\node (t6) at (1,0.7) {{\scriptsize $\eta(y)$}};
				\fill[blue] (0,1.5) circle (0.5pt);
			\end{scope}

		\end{tikzpicture}
    \caption{Examples of admissible test functions on the interval $[0,1]$.}
    \label{figbarta}
\end{figure}
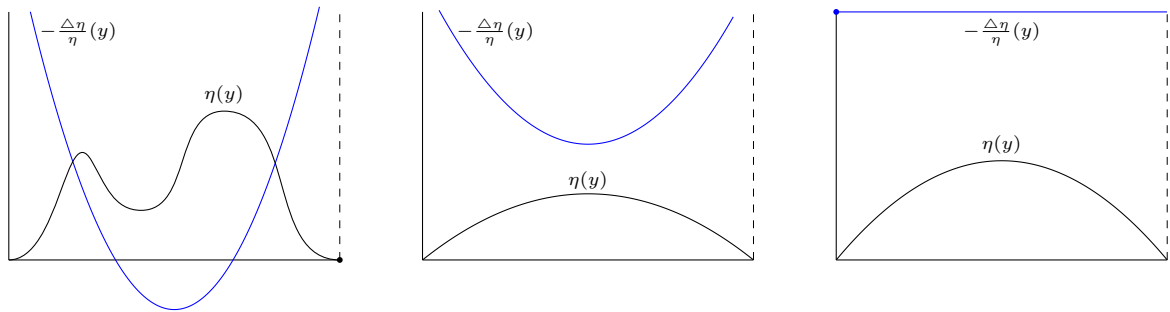

The lower inequality \eqref{eq:barta} above admits a natural extension to arbitrary  domains $\Omega \subseteq M$, as established by Bessa--Montenegro in \cite{BessaMontenegro}. Moreover, a version of Theorem \ref{thm:barta} for the $p$-Laplacian on domains $\Sigma \subset \mathbb{R}^n$ can be found in \cite[Section 2.2] {AllegrettoHuang1998}. In this manuscript, adapting the techniques developed in
\cite{AllegrettoHuang1998, BessaMontenegro}, we establish an analog of \cite[Proposition 2.1]{BessaMontenegro} for the $p$-fundamental tone. Throughout the following theorems concerning the $p$-Laplacian operator, we assume that $p \in (1,\infty)$, unless otherwise stated.

\begin{theorem}[$p$-Barta's inequality]\label{thm:pbarta} Let $(M,g)$ be a smooth Riemannian manifold, and let $\Omega \subset M$ be a domain. Assume that $\eta \in \mathcal{C}^{1+\alpha}(\Omega)\cap\mathcal{C}^{0}(\overline{\Omega})$ with $\triangle_p \eta \in \mathcal{C}^0(\Omega)$
and $\eta>0$ in $\Omega$.
Then the $p$-fundamental tone $\lambda_{1,p}(\Omega)$ satisfies
\begin{equation}\label{eq:pBarta}
\lambda_{1,p}(\Omega)
\ge
\inf_{\Omega}
\left\{
-\frac{\triangle_p \eta}{\eta^{p-1}}
\right\}.
\end{equation}
Moreover, if $\Omega$ is bounded, the equality in \eqref{eq:pBarta}
holds if and only if $\eta=\beta u$ for some constant $\beta>0$,
where $u \in W^{1,p}_0(\Omega)$ is a nonnegative minimizer of the Rayleigh quotient.
\end{theorem}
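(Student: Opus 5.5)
We prove the inequality \eqref{eq:pBarta} with a Picone-type identity for the $p$-Laplacian, in the spirit of Allegretto--Huang, and then identify the equality case.

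Set $c=\inf_\Omega\{-\triangle_p\eta/\eta^{p-1}\}$. We may assume $c>-\infty$, since otherwise there is nothing to prove; if $c\le 0$ the bound is trivial, so it suffices to treat $c>0$. By density of $\mathcal{C}^\infty_0(\Omega)$ in $W^{1,p}_0(\Omega)$ and the definition \eqref{eq:Rayleigh quotient}, it is enough to show
\[
\int_\Omega|\nabla\varphi|^p\,d\mu\ \ge\ c\int_\Omega|\varphi|^p\,d\mu \qquad \text{for every } \varphi\in\mathcal{C}^\infty_0(\Omega).
\]
Replacing $\varphi$ by $|\varphi|$, which is Lipschitz with compact support and has $|\nabla|\varphi||=|\nabla\varphi|$ a.e., we may take $\varphi\ge0$.

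Since $\eta>0$ is continuous, it has a positive lower bound on $K=\operatorname{supp}\varphi$. Hence $\psi=\varphi^p/\eta^{p-1}$ is a compactly supported Lipschitz function, and it is admissible as a test function in the weak form of $\triangle_p\eta$. Here $\triangle_p\eta\in\mathcal C^0$ is understood in the distributional sense, and the weak form extends from $\mathcal{C}_0^\infty$ to $W^{1,p}$ functions with compact support by approximation. This gives
\[
\int_\Omega\Big\langle|\nabla\eta|^{p-2}\nabla\eta,\nabla\Big(\tfrac{\varphi^p}{\eta^{p-1}}\Big)\Big\rangle\,d\mu=\int_\Omega(-\triangle_p\eta)\,\frac{\varphi^p}{\eta^{p-1}}\,d\mu\ \ge\ c\int_\Omega\varphi^p\,d\mu .
\]
The key step is Picone's pointwise inequality (Allegretto--Huang): for $\varphi\ge0$ and $\eta>0$ differentiable,
\[
L(\varphi,\eta):=|\nabla\varphi|^p-\Big\langle|\nabla\eta|^{p-2}\nabla\eta,\nabla\Big(\tfrac{\varphi^p}{\eta^{p-1}}\Big)\Big\rangle\ \ge 0 .
\]
To prove it, expand the gradient and write
\[
L=|\nabla\varphi|^p-p\Big(\tfrac{\varphi}{\eta}\Big)^{p-1}|\nabla\eta|^{p-2}\langle\nabla\eta,\nabla\varphi\rangle+(p-1)\Big(\tfrac{\varphi}{\eta}\Big)^p|\nabla\eta|^p .
\]
By Cauchy--Schwarz, $L$ is bounded below by $|\nabla\varphi|^p-p\,a^{p-1}|\nabla\varphi|+(p-1)a^p$ with $a=\tfrac{\varphi}{\eta}|\nabla\eta|$. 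This quantity is nonnegative by Young's inequality $p\,a^{p-1}b\le (p-1)a^p+b^p$. Integrating $L\ge0$ against the previous display yields the desired inequality, and therefore \eqref{eq:pBarta}.

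For the equality case with $\Omega$ bounded, the ``if'' direction is immediate. If $\eta=\beta u$ with $u$ a first eigenfunction, then $-\triangle_p\eta=\lambda_{1,p}\eta^{p-1}$, so the infimum equals $\lambda_{1,p}$. For the ``only if'' direction, suppose $c=\lambda_{1,p}(\Omega)$. Take the positive first eigenfunction $u$ (simplicity and positivity as recalled in the introduction) and apply the argument above with $\varphi$ replaced by cutoffs approximating $u$. This gives
\[
0=\int|\nabla u|^p-\lambda_{1,p}\int u^p\ \ge\ \int L(u,\eta)\,d\mu\ \ge 0,
\]
so $L(u,\eta)=0$ a.e. Equality in both Cauchy--Schwarz and Young then forces $\nabla u=\tfrac{u}{\eta}\nabla\eta$, that is, $\nabla(u/\eta)=0$. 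Since $\Omega$ is connected, $u/\eta$ is constant.

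\textbf{Main obstacle.} The delicate point is this limiting argument, since $u\in W^{1,p}_0$ only and $\eta$ may degenerate at $\partial\Omega$, so $u^p/\eta^{p-1}$ need not be an admissible test function globally. I would first take $\varphi=u\zeta_k$ with $\zeta_k\in\mathcal{C}^\infty_0$ exhausting $\Omega$ and apply Fatou's lemma to $L(u\zeta_k,\eta)\ge0$. On each compact set $K$, the integrals $\int_K L(u\zeta_k,\eta)$ are bounded by $\int|\nabla(u\zeta_k)|^p-c\int(u\zeta_k)^p\to0$. This gives $L(u,\eta)=0$ on each $K$, so no boundary regularity is needed.
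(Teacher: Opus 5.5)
Your argument follows the paper's proof closely. For \eqref{eq:pBarta} you integrate Picone's inequality against the weak form of $\triangle_p\eta$ and use density. In the bounded case you approximate the positive minimizer $u$ and then analyse equality in Picone; your Cauchy--Schwarz and Young defects are exactly the paper's terms $\mathcal{B}$ and $\mathcal{A}$.

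The step that fails as written is the one you single out as the main obstacle. The bound $\int_K L(u,\eta)\,d\mu\le\int_\Omega|\nabla(u\zeta_k)|^p\,d\mu-c\int_\Omega(u\zeta_k)^p\,d\mu$ is fine. But its right-hand side tends to $\int_\Omega|\nabla u|^p-c\int_\Omega u^p=0$ only if $u\zeta_k\to u$ in $W^{1,p}(\Omega)$, that is, only if $\int_\Omega u^p|\nabla\zeta_k|^p\,d\mu\to 0$. For cutoffs that merely exhaust $\Omega$ this is false on irregular domains, which is exactly where you claim no boundary regularity is needed.

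A concrete counterexample: take $\Omega=B_1\setminus\{0\}\subset\mathbb{R}^2$ and $p=2$. A point has zero capacity, so $W^{1,2}_0(\Omega)=W^{1,2}_0(B_1)$, and the minimizer is the disk eigenfunction, with $u(0)>0$. Choose cutoffs with $\zeta_k=0$ on $B_{1/(2k)}$ and $\zeta_k(x)=2k|x|-1$ on $B_{1/k}\setminus B_{1/(2k)}$. Then $\int u^2|\nabla\zeta_k|^2\to 3\pi\,u(0)^2$, hence $\int|\nabla(u\zeta_k)|^2-c\int(u\zeta_k)^2\to 3\pi\,u(0)^2>0$. So your estimate does not force $L(u,\eta)=0$.

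The fix is to drop the product structure and use what the definition of $W^{1,p}_0(\Omega)$ gives you, as the paper does.
\begin{itemize}
\item Take $\varphi_k\in\mathcal{C}^\infty_0(\Omega)$ with $\varphi_k\to u$ in $W^{1,p}$, replace them by $|\varphi_k|$, and pass to a subsequence with $\varphi_k\to u$ and $\nabla\varphi_k\to\nabla u$ a.e. Since $u>0$, $\nabla|\varphi_k|=\nabla\varphi_k$ eventually at a.e.\ point.
\item Your first step gives $\int_\Omega L(|\varphi_k|,\eta)\,d\mu\le\int_\Omega|\nabla\varphi_k|^p\,d\mu-c\int_\Omega|\varphi_k|^p\,d\mu\to 0$.
\item $L(\cdot,\eta)\ge 0$ depends continuously on $(\varphi,\nabla\varphi)$ wherever $\eta>0$, so Fatou's lemma yields $\int_\Omega L(u,\eta)\,d\mu\le 0$, hence $L(u,\eta)=0$ a.e.
\end{itemize}
The rest of your equality argument then goes through. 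Your device of bounding by the constant $c$ before taking limits is cleaner than the paper's limit passage. The paper keeps the weight $-\triangle_p\eta/\eta^{p-1}$, which may be unbounded near $\partial\Omega$, inside the limit, and so tacitly needs a Fatou argument there as well.
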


Another elegant application of Theorem \eqref{thm:barta} is the classical theorem of Cheng \cite{Cheng1975}, which establishes an important comparison result for Dirichlet eigenvalues (see also \cite[Theorem 1.3]{BessaMontenegro}).

\begin{theorem}[Cheng \cite{Cheng1975}] \label{thm:cheng} Let $M$ be an $m$-dimensional Riemannian manifold and let $B_M(p,r)$ be the geodesic ball centered at $p_0$ with radius $r<\operatorname{inj}(p_0)$.  
Assume that the sectional curvatures of $M$ on $B_M(p_0,r)$ are bounded above by $c$, and let  $\mathbb{M}^m(c)$ denote the simply connected $m$-dimensional space form of constant sectional curvature $c$. 
Then,
\begin{equation}
\lambda_1^{\mathcal{D}} \big( B_M(p_0,r) \big) \ge \lambda_1^{\mathcal{D}} \big(B(r) \big).
\end{equation}
Where $B(r)$ is the ball of radius $r$ in the model space $\mathbb{M}^m(c)$.
\end{theorem}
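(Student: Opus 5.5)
The statement to prove is Cheng's comparison theorem (Theorem \ref{thm:cheng}). The plan is to apply the lower bound in Barta's inequality (Theorem \ref{thm:barta}), or its general form from \cite{BessaMontenegro}, to a transplanted model eigenfunction. Let $\phi\colon[0,r]\to\mathbb{R}$ be the radial first Dirichlet eigenfunction of $B(r)\subset\mathbb{M}^m(c)$. It satisfies
\begin{equation*}
\phi''(t)+(m-1)\frac{S_c'(t)}{S_c(t)}\,\phi'(t)+\lambda_1^{\mathcal{D}}(B(r))\,\phi(t)=0,\qquad \phi'(0)=0,\quad \phi(r)=0,
\end{equation*}
with $\phi>0$ on $[0,r)$. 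Here $S_c$ is the usual generalized sine, and in the spherical case we note $r<\pi/\sqrt{c}$, so that $S_c>0$ on $(0,r]$.

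\textbf{Step 1: sign of $\phi'$.} We first show $\phi'\le 0$ on $[0,r]$. Write the equation as $(S_c^{m-1}\phi')'=-\lambda_1 S_c^{m-1}\phi$ and integrate from $0$; since $\phi>0$ the right side is negative, so $S_c^{m-1}\phi'<0$ on $(0,r]$.

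\textbf{Step 2: the test function.} Define $\eta(x)=\phi(\rho(x))$ on $B_M(p_0,r)$, where $\rho=d(p_0,\cdot)$. Because $r<\operatorname{inj}(p_0)$, $\rho$ is smooth away from $p_0$. Also $\phi$ is even and smooth at $0$, so $\eta\in\mathcal{C}^2$, $\eta>0$ inside, and $\eta=0$ on the boundary. Using $|\nabla\rho|=1$,
\begin{equation*}
\triangle\eta=\phi''(\rho)+\phi'(\rho)\,\triangle\rho .
\end{equation*}

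\textbf{Step 3: Hessian comparison (the main obstacle).} We need the Laplacian comparison $\triangle\rho\ge (m-1)S_c'(\rho)/S_c(\rho)$, valid inside the injectivity radius when sectional curvature is at most $c$. This is the Rauch-type comparison: along each radial geodesic, the Jacobi fields are compared to those of the model, and this bounds the Hessian of $\rho$ from below by $\frac{S_c'}{S_c}(g-d\rho\otimes d\rho)$. The proof must make sure no conjugate points occur before $r$, which follows from $r<\operatorname{inj}(p_0)$ together with $r<\pi/\sqrt{c}$ when $c>0$. The index form argument then gives the inequality. I expect this step to carry the real content; the rest is algebra.

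\textbf{Step 4: conclude.} Since $\phi'\le0$, the comparison in Step 3 gives
\begin{equation*}
\triangle\eta\le \phi''+(m-1)\frac{S_c'}{S_c}\phi'=-\lambda_1^{\mathcal{D}}(B(r))\,\eta,
\end{equation*}
that is, $-\triangle\eta/\eta\ge\lambda_1^{\mathcal{D}}(B(r))$ everywhere in $B_M(p_0,r)\setminus\{p_0\}$. At $p_0$ the same bound holds by continuity. The left inequality of Theorem \ref{thm:barta} then yields
\begin{equation*}
\lambda_1^{\mathcal{D}}(B_M(p_0,r))\ge\inf\left\{-\frac{\triangle\eta}{\eta}\right\}\ge\lambda_1^{\mathcal{D}}(B(r)).
\end{equation*}
The ball $B_M(p_0,r)$ has smooth boundary, since $\rho$ is smooth with $|\nabla\rho|=1$ there. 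Alternatively, the boundary-free version from \cite{BessaMontenegro} can be used to avoid any regularity discussion.
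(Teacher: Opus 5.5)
Your proof is correct. It is essentially the argument the paper gives for its $p$-Laplacian version, Theorem~\ref{thm:Cheng}, specialized to $p=2$: transplant the radial model eigenfunction, combine $\phi'\le 0$ with the Laplacian comparison $\triangle\rho\ge (m-1)S_c'/S_c$, and apply the lower Barta bound. The only difference is that $r<\operatorname{inj}(p_0)$ removes the cut locus, so you can use the classical Barta inequality directly instead of the vector-field version (Theorem~\ref{Thm:BMS} with Proposition~\ref{LemmaSobolev}) that the paper needs there.

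One small correction: for $c>0$, the bound $r<\pi/\sqrt{c}$ does not follow from $r<\operatorname{inj}(p_0)$. Flat $\mathbb{R}^m$ has curvature $\le c$ and infinite injectivity radius. So state it as the usual implicit hypothesis of Cheng's theorem; the excluded case is degenerate, since for $m\ge 2$ the model ``ball'' then has zero fundamental tone.
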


Through Theorem \ref{thm:pbarta}, we obtain geometric applications for the $p$-Laplacian that naturally extend the results established by Bessa--Montenegro in \cite[Section 2]{BessaMontenegro}. Theorem \ref{thm:cheng} was originally extended to arbitrary domains in the work \cite{BessaMontenegro}. The following theorem can be regarded as a $p$-Laplacian analog of Cheng's theorem. Its proof follows the strategy introduced by Bessa--Montenegro \cite[Theorem 1.8]{BessaMontenegro}, combined with the arguments developed by Lima--Montenegro--Santos \cite{LimaMontenegroSantos2010}. Additional comparison results can be found in \cite{BarrosoBessa2006, BessaMontenegro2003, matei, Mao2014}.

\begin{theorem}\label{thm:Cheng}
    Let $(M^m,g)$ be a Riemannian manifold, and let $p_0\in N$. Assume that the radial sectional curvature of $M$ satisfies $K(\partial_t,v) \leq c$, $x\in B_M(p_0,r)\setminus \operatorname{Cut}(p_0)$, $v\perp \partial_t,\ |v|\leq1$, where $\partial_t$ denotes the unit vector radial field along the minimal geodesics issued from $p_0$. Let $\mathbb M^m(c)$ be the simply connected $n$-dimensional space form of constant sectional curvature $c$, and suppose that $\mathcal{H}^{m-1}\big(\operatorname{Cut}(p_0)\cap B_M(p_0,r)\big)=0$. Then the bottom of the Dirichlet spectrum of the geodesic ball $B_M(p_0,r)$ satisfies
\begin{equation}
\lambda_{1,p}\big(B_M(p_0,r)\big) \geq \lambda_{1,p}^{\mathcal{D}}\big(B(r)\big).    
\end{equation}\label{comparison}
Moreover, equality holds if and only if the geodesic balls
$B_M(p_0,r)$ and $B(r)$ are isometric.
\end{theorem}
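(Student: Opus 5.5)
The plan is to apply Theorem \ref{thm:pbarta} with a test function transplanted from the model space. Let $v$ be the positive first Dirichlet eigenfunction of $\triangle_p$ on $B(r)\subset\mathbb M^m(c)$, with eigenvalue $\lambda:=\lambda_{1,p}^{\mathcal D}(B(r))$. By simplicity of the first eigenvalue and rotational invariance of the model ball, $v$ is radial, $v=v(t)$. It solves the ODE
\[
(|v'|^{p-2}v')' + (m-1)\frac{\operatorname{sn}_c'(t)}{\operatorname{sn}_c(t)}|v'|^{p-2}v' = -\lambda v^{p-1},
\]
with $v'(0)=0$ and $v(r)=0$. A standard ODE/maximum principle argument gives $v'<0$ on $(0,r)$. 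We then set $\eta(x)=v(d(p_0,x))$ on $B_M(p_0,r)$. This function is positive, vanishes on the boundary, and is $\mathcal C^{1,\alpha}$ away from $\operatorname{Cut}(p_0)$.

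Off the cut locus, the radial structure gives
\[
\triangle_p\eta = (|v'|^{p-2}v')'(t)+|v'|^{p-2}v'(t)\,\Delta t.
\]
The Hessian comparison theorem under $K(\partial_t,v)\le c$ yields $\Delta t\ge (m-1)\operatorname{sn}_c'/\operatorname{sn}_c$. Since $|v'|^{p-2}v'\le 0$, this gives $\triangle_p\eta\le -\lambda\eta^{p-1}$, that is, $-\triangle_p\eta/\eta^{p-1}\ge\lambda$ pointwise on $B_M(p_0,r)\setminus\operatorname{Cut}(p_0)$.

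\textbf{Main obstacle: the cut locus.} Theorem \ref{thm:pbarta} as stated needs $\eta\in\mathcal C^{1+\alpha}$ with $\triangle_p\eta$ continuous on all of $\Omega$, which fails across $\operatorname{Cut}(p_0)$. I would therefore not apply the theorem verbatim. Instead I would rerun its proof in weak form, following \cite{BessaMontenegro} and \cite{LimaMontenegroSantos2010}: show that $\triangle_p\eta\le-\lambda\eta^{p-1}$ holds in the distributional sense on the whole ball.

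To do this, integrate against a nonnegative test function $\phi$ over the star-shaped domain $B_M(p_0,r)\setminus\operatorname{Cut}(p_0)$, exhausted by smooth domains approaching the cut locus from inside along geodesics. The boundary terms on the cut locus have the form $\phi|v'|^{p-2}v'\langle\partial_t,\nu\rangle$. These have a good sign: $v'\le0$, and $\partial_t$ points outward toward the cut locus along minimizing geodesics. The hypothesis $\mathcal H^{m-1}(\operatorname{Cut}(p_0)\cap B_M(p_0,r))=0$ is used to discard whatever is not controlled by this sign, so that the divergence theorem applies.

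With the weak supersolution inequality in hand, the Picone/Allegretto–Huang argument behind Theorem \ref{thm:pbarta} goes through. For $\varphi\in\mathcal C_0^\infty$ one tests with $|\varphi|^p/(\eta+\varepsilon)^{p-1}$ and applies Picone's inequality $|\nabla\varphi|^p\ge\langle|\nabla\eta|^{p-2}\nabla\eta,\nabla(|\varphi|^p/\eta^{p-1})\rangle$. Letting $\varepsilon\to0$ gives $\int|\nabla\varphi|^p\ge\lambda\int|\varphi|^p$, hence $\lambda_{1,p}(B_M(p_0,r))\ge\lambda$.

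For equality, the equality case of Theorem \ref{thm:pbarta} (the ball is bounded) forces $\eta$ to be a first eigenfunction. The pointwise inequality must then be an equality, so $\Delta t=(m-1)\operatorname{sn}_c'/\operatorname{sn}_c$ wherever $v'\neq0$, which is all of $(0,r)$. Rigidity in the Laplacian comparison then makes the metric in polar coordinates equal to $dt^2+\operatorname{sn}_c^2(t)g_{S^{m-1}}$ and the cut locus empty in the ball. Hence $B_M(p_0,r)$ is isometric to $B(r)$; the converse is immediate.
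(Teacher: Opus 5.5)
Your strategy matches the paper's in substance: transplant the radial model eigenfunction, get $-\triangle_p\eta/\eta^{p-1}\ge\lambda_{1,p}^{\mathcal D}(B(r))$ off the cut locus from $\Delta t\ge(m-1)S_c'/S_c$, then run Picone. The step that handles the cut locus is wrong, however: the boundary terms there have the \emph{bad} sign. Take $Y=|\nabla\eta|^{p-2}\nabla\eta=|v'|^{p-2}v'\,\partial_t$ and let $U_\epsilon$ be your exhaustion of $B_M(p_0,r)\setminus\operatorname{Cut}(p_0)$. For $\phi\ge0$,
\[
\int_{U_\epsilon}\langle Y,\nabla\phi\rangle\,d\mu=\int_{U_\epsilon}\phi\,(-\triangle_p\eta)\,d\mu+\int_{\partial U_\epsilon}\phi\,|v'|^{p-2}v'\,\langle\partial_t,\nu\rangle\,d\sigma .
\]
Since $v'\le0$ and $\langle\partial_t,\nu\rangle\ge0$, the last term is $\le0$. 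It therefore works against the inequality you need, $\int\langle Y,\nabla\phi\rangle\,d\mu\ge\lambda\int\eta^{p-1}\phi\,d\mu$. Geometrically, near a typical cut point $\eta=\max_i v(t_i)$ over the local distance sheets. That is a convex kink, so the cut locus carries a \emph{positive} singular part of $\triangle_p\eta$. A favorable sign would also make the Hausdorff hypothesis superfluous, and the inequality is false without it. On the flat torus $M=\mathbb R^m/\mathbb Z^m$ with $c=0$ and $r>\operatorname{diam}M$, the ball $B_M(p_0,r)=M$ is closed, so $\lambda_{1,p}=0<\lambda_{1,p}^{\mathcal D}(B(r))$. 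There $\operatorname{Cut}(p_0)$, the image of the faces of the cube centered at $p_0$, has positive $\mathcal H^{m-1}$-measure.

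The missing idea is that $\mathcal H^{m-1}(\operatorname{Cut}(p_0)\cap B_M(p_0,r))=0$ must dispose of the \emph{whole} cut-locus contribution, not just a remainder. Proposition \ref{LemmaSobolev} does exactly this. $Y$ is bounded, $\mathcal C^1$ off the closed set $\{p_0\}\cup\operatorname{Cut}(p_0)$, and has locally integrable divergence. Cover $\operatorname{Cut}(p_0)\cap\operatorname{supp}\phi$ by balls $B(x_i,\rho_i)$ with $\sum_i\rho_i^{m-1}$ arbitrarily small; cutting off there costs at most $C\|Y\|_\infty\sum_i\rho_i^{m-1}$. Hence the weak $p$-Laplacian of $\eta$ equals the pointwise one, $\eta$ is a weak supersolution on the whole ball, and your Picone step goes through. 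A radial exhaustion by itself does not show that the limiting flux through the cut locus vanishes; you still need such a covering. The paper proceeds the same way: it applies the proposition to $X=-|\nabla\psi|^{p-2}\nabla\psi/\psi^{p-1}$ and then uses the vector-field bound of Theorem \ref{Thm:BMS}, which is the same Young/Picone inequality written in terms of $X$.

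There are also two smaller points in your equality case. First, you invoke the equality part of Theorem \ref{thm:pbarta} for an $\eta$ that, as you noted, violates its regularity hypotheses. That argument has to be redone in weak form; it only uses the equality case of Picone, which is fine for locally Lipschitz $\eta$. Second, $\Delta t=(m-1)S_c'/S_c$ off the cut locus does not by itself make the cut locus empty, since it holds exactly on a flat torus. You need an extra step, e.g.\ that $\eta=\beta u$ is $\mathcal C^1$ whereas $\nabla\eta$ jumps at any cut point joined to $p_0$ by two distinct minimizing geodesics. These are the only cut points in the ball, because $K\le c$ excludes conjugate points there.
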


Another result in the same spirit as Theorem \ref{thm:cheng} is the classical
Cheng--Li--Yau Theorem (see \cite{ChengLiYau1984}), which provides a lower bound
for the first Dirichlet eigenvalue of a minimal immersion $M^{m} \hookrightarrow \mathbb{N}^{n}(c)$.

\begin{theorem}[Cheng--Li--Yau]\label{thm:ChengLiY}
Let $M^{m}\subset \mathbb{M}^{n}(c)$ be an immersed $m$-dimensional minimal submanifold
of the $n$-dimensional space form of constant sectional curvature
$c\in\{-1,0,1\}$, and let $K\subset M^{m}$ be a compact domain with $\mathcal{C}^{2}$ boundary.
Let
\[
r:=\inf_{p_0\in K}\,\sup_{z\in K}\,
\operatorname{dist}_{\mathbb{M}^{n}(c)}(p_0,z)>0,
\]
be the outer radius of $K$.
If $c=1$, assume additionally that $r \leq \pi/2$.
Then
\begin{equation}\label{eq:ChengLiYau}
    \lambda_{1}^{\mathcal{D}}(K)\ge \lambda_{1}^{\mathcal{D}}\big(B(r)\big).
\end{equation}
Moreover, the equality in \eqref{eq:ChengLiYau} holds if and only if
$M$ is totally geodesic in $\mathbb{M}^{n}(c)$ and
$K=B(r)$.
\end{theorem}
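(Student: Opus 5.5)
The plan is to apply the lower bound in Barta's inequality (Theorem \ref{thm:barta}) to a transplanted radial eigenfunction. Fix $p_0\in\mathbb{M}^n(c)$ realizing the outer radius, so that $K\subset \overline{B_{\mathbb{M}^n(c)}(p_0,r)}$. Let $\rho=\operatorname{dist}_{\mathbb{M}^n(c)}(p_0,\cdot)$, and let $\varphi(t)$ be the positive, radial first Dirichlet eigenfunction of $B(r)\subset\mathbb{M}^m(c)$. Then
\[
\varphi''+(m-1)\frac{C_c'}{C_c}\varphi'=-\lambda\varphi,\qquad \varphi'(0)=0,\quad \varphi'<0 \text{ on } (0,r),
\]
with $C_c(t)=t,\ \sinh t,\ \sin t$ for $c=0,-1,1$. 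Since $\rho$ may reach $r$ on $\partial K$, I will work with $B(r+\varepsilon)$, where $\varepsilon>0$ is small (and $r+\varepsilon<\pi$ if $c=1$), and let $\varepsilon\to0$ at the end using continuity of $\lambda_1^{\mathcal D}(B(r))$ in $r$. The test function is $\eta=\varphi\circ\rho|_M$. It is positive on $K$, and it is $\mathcal C^2$ even at $p_0$ (if $p_0\in M$) because $\varphi$ extends evenly.

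Next I compute $\triangle_M\eta$. With $\nabla\rho$ the ambient gradient and $\nabla^M\rho$ its tangential part,
\[
\triangle_M\eta=\varphi''|\nabla^M\rho|^2+\varphi'\,\triangle_M\rho,\qquad \triangle_M\rho=\sum_i \operatorname{Hess}\rho(e_i,e_i)+\langle \vec H,\nabla\rho\rangle .
\]
Minimality kills the mean curvature term. The Hessian comparison in the space form is an equality, $\operatorname{Hess}\rho=\frac{C_c'}{C_c}(g-d\rho\otimes d\rho)$, so
\[
\triangle_M\rho=\frac{C_c'}{C_c}\bigl(m-|\nabla^M\rho|^2\bigr).
\]
Substituting the model ODE gives the key identity
\[
\triangle_M\eta=-\lambda\eta+\bigl(1-|\nabla^M\rho|^2\bigr)\Bigl(\frac{C_c'}{C_c}\varphi'-\varphi''\Bigr)\circ\rho .
\]
Since $|\nabla^M\rho|\le1$, the bound $-\triangle_M\eta/\eta\ge\lambda_1^{\mathcal D}(B(r+\varepsilon))$ reduces to a one-variable fact: $q:=\varphi''-\frac{C_c'}{C_c}\varphi'\ge0$ on $[0,r+\varepsilon)$. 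Given that, Theorem \ref{thm:barta} gives $\lambda_1^{\mathcal D}(K)\ge\lambda_1^{\mathcal D}(B(r+\varepsilon))$, and letting $\varepsilon\to0$ gives \eqref{eq:ChengLiYau}.

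\textbf{Main obstacle: the sign of $q$.} By the ODE, $q=-\lambda\varphi-m\frac{C_c'}{C_c}\varphi'$, so I need $-\varphi'\,C_c'/C_c\ge\lambda\varphi/m$. Integrating $(-\varphi' C_c^{m-1})'=\lambda\varphi C_c^{m-1}$ and using that $\varphi$ is decreasing gives
\[
-\varphi'(t)C_c^{m-1}(t)\ge\lambda\varphi(t)\int_0^tC_c^{m-1}.
\]
It then suffices to have $\int_0^tC_c^{m-1}\ge C_c^m/(mC_c')$. This is an equality for $c=0$, and for $c=-1$ it follows by differentiating. For $c=1$ this crude bound points the wrong way. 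There I would argue directly on $q$: differentiate the ODE to get a first-order linear inequality for $q$, with $q(0)=0$ and $q'(0)\ge 0$ from the expansion at $t=0$. Then use $\cot t>0$ on $(0,\pi/2]$ and a maximum-principle (first-zero) argument to exclude $q<0$. This is exactly where the hypothesis $r\le\pi/2$ enters. I expect this step to be the delicate one.

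For the equality case, equality in Barta forces $\eta$ to be a first eigenfunction, so the error term must vanish. Since $q>0$ in the interior (from the strict version of the above), this gives $|\nabla^M\rho|\equiv1$ on $K$. Hence $M$ contains the geodesics from $p_0$ through each of its points, so $M$ is a totally geodesic cone, i.e.\ totally geodesic, and then $K=B(r)$ by the definition of outer radius and the boundary condition $\eta|_{\partial K}=0$.
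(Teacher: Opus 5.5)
Your route is the one the paper itself follows: it is the $p=2$ case of the proof of Theorem~\ref{thm:pminsub_ball}, which is the Bessa--Montenegro argument. You transplant the radial eigenfunction, kill the mean curvature term by minimality, use the exact Hessian of $\rho$, and reduce everything to $q=\varphi''-\frac{C_c'}{C_c}\varphi'\ge 0$, which is equivalent to \eqref{restricmodel} at $p=2$. Your identity for $\triangle_M\eta$ is correct. Your integration argument for $c\in\{0,-1\}$ is also correct, and simpler than the paper's auxiliary functions $\mathcal{V}_c$.

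\textbf{The spherical case is a genuine gap.} This is exactly the step you left open, and the mechanism you suggest does not close it. Since $q$ is even in $t$, the facts $q(0)=q'(0)=0$ carry no information. Differentiating the ODE exactly, using $\kappa'=-c-\kappa^2$ for $\kappa=C_c'/C_c$, gives
\[
q'+m\frac{C_c'}{C_c}\,q=(mc-\lambda)\,\varphi', \qquad\text{i.e.}\qquad \bigl(C_c^m q\bigr)'=(mc-\lambda)\,C_c^m\varphi' .
\]
For $c=1$ the sign of the right-hand side is that of $\lambda-m$, not that of $\cot t$. Indeed, if $r>\pi/2$ then $\lambda<m$ and $q<0$ on all of $(0,r)$, including where $\cot t>0$. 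The missing idea is
\[
\lambda=\lambda_1^{\mathcal D}(B(r))\ge\lambda_1^{\mathcal D}(B(\pi/2))=m \quad\text{for } r\le\pi/2,
\]
which follows because the hemisphere has first eigenfunction $\cos t$, together with domain monotonicity; the inequality is strict for $r<\pi/2$. With this, $C_c^m q$ increases from $0$. Hence $q\ge 0$ in all three cases at once, and $q>0$ on $(0,r)$ except when $c=1$, $r=\pi/2$.

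\textbf{The $\varepsilon$-enlargement breaks the borderline and equality cases.} When $c=1$, $r=\pi/2$, you have $\lambda_1^{\mathcal D}(B(r+\varepsilon))<m$. The identity above then gives $q_\varepsilon<0$ on the whole interval, and the Barta bound is lost. For the equality case the trick fails for every $c$: if $\lambda_1^{\mathcal D}(K)=\lambda_1^{\mathcal D}(B(r))>\lambda_1^{\mathcal D}(B(r+\varepsilon))$, there is no equality in Barta to exploit.

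Instead, take $\varphi$ on $B(r)$ itself and use the lower bound of Theorem~\ref{thm:pbarta} with $p=2$, which only needs $\eta>0$ in the interior. (Theorem~\ref{thm:barta} as stated requires $\eta|_{\partial\Omega}=0$, so it did not apply to your $\eta$ anyway.) Interior positivity holds because at an interior maximum of $\rho|_K$ one would have $\triangle_M\rho=m\,C_c'/C_c>0$, which is impossible. In the case $c=1$, $r=\pi/2$, use instead $\triangle_M\cos\rho=-m\cos\rho$, the strong maximum principle, and $p_0\in K$. The equality clause of Theorem~\ref{thm:pbarta} then gives $(1-|\nabla^M\rho|^2)\,q\equiv 0$. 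Your cone argument goes through whenever $q>0$, that is, for $c\le 0$ or for $c=1$ with $r<\pi/2$.

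\textbf{Rigidity fails at $c=1$, $r=\pi/2$.} There $\varphi=\cos t$ and $q\equiv 0$, so no rigidity can be extracted, and in fact it is false. On every minimal $M\subset S^n$ one has $\triangle_M\cos\rho=-m\cos\rho$. Take, for example, $M=S^1(\sqrt{1/3})\times S^2(\sqrt{2/3})\subset S^4$ and $p_0\in M$. Then $0$ is a regular value of $\cos\rho|_M$. The closure $K$ of the connected set $\{\cos\rho>0\}$ has smooth boundary, and $\lambda_1^{\mathcal D}(K)=3=\lambda_1^{\mathcal D}(B(\pi/2))$. Its outer radius is exactly $\pi/2$, since a smaller one would force $\lambda_1^{\mathcal D}(K)>3$. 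Yet $M$ is not totally geodesic. So the equality statement needs $r<\pi/2$ when $c=1$.
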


The above theorem admits a natural extension to the fundamental tone, as established by \cite{BessaMontenegro}. The following result extends this theorem to the nonlinear setting of the $p$-Laplacian operator. Due to the $p$-homogeneity of the operator $\triangle_p$, a restriction arises on the radius $r_\star(c)$ of the ball in the $m$-dimensional model space $\mathbb{M}^m(c)$ of constant sectional curvature $c$ where the comparison holds. In Theorem 1.10 of \cite{BessaMontenegro}, the restriction on $r_\star(c)$ appears naturally when the model space is spherical, i. e, $c>0$; in this case, one has $r_\star(1) < \pi/2$, while for $c \leq 0$ there is no dependence on the curvature of the space form, that is, $r_\star(c)=r$. The theorem stated below provides a natural extension of \cite[Theorem 1.10]{BessaMontenegro} to the range $p\in[2,\infty)$. In particular, we are able to preserve the comparison in the case $c>0$ for all $r\in(0,\pi/2)$. However, when $c=0$ or $c<0$, the dependence on the model space $\mathbb{M}^m(c)$ induced by the nonlinearity of the operator cannot be removed, leading to geometric obstructions to the comparison beyond a certain critical radius $r_\star(c)<r$. Additional results in the literature providing bounds for the $p$-fundamental tone can be found in \cite{BarrosoBessa2006, DuMao2017, matei, PHCsilvapPinch}.

\begin{theorem}\label{thm:pminsub_ball}
Let $(N^n,g)$ be an $n$-dimensional Riemannian manifold and fix $p_0\in N$. Assume that the radial sectional curvatures of $N$ with respect to $p_0$ satisfy $K_N(\partial_t,v) \leq c$ for every $x\in B_N(p_0,r)\setminus \mathrm{Cut}(p_0)$ and every unit vector 
$v\perp \partial_t$. Suppose moreover that $\mathcal{H}^{n-1}\!\big(\mathrm{Cut}(p_0)\cap B_N(p_0,r)\big)=0$. Let $\varphi:M^m\hookrightarrow N^n$ be a minimal immersion and let 
$\Omega\subset \varphi^{-1}(B_N(p_0,r))$ be a connected component. 
Assume that the extrinsic distance function $t(x)=\operatorname{dist}_N(\varphi(x),p_0)$ satisfies $|\nabla^M t|\ge k>0$ in $\Omega$. Let $B(r)\subset \mathbb{M}^m(c)$ denote the geodesic ball of radius $r$ in the 
$m$-dimensional simply connected space form of constant curvature $c$. 
Then for every $2\le p<\infty$ and $r\le r_\ast(c)$, it holds that
\[
\lambda_{1,p}(\Omega)
\ge
k^{p-2} \cdot \lambda_{1,p}^{\mathcal D}(B(r)).
\]
If equality holds for some bounded $\Omega$, then necessarily 
$k=1$ and $\Omega$ is isometric to $B(r)$.
\end{theorem}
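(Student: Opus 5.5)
We apply Theorem \ref{thm:pbarta} on $\Omega$ with the transplanted model eigenfunction. Let $v\colon[0,r]\to\mathbb{R}$ be the positive radial first Dirichlet eigenfunction of $\triangle_p$ on $B(r)\subset\mathbb{M}^m(c)$, normalized by $v(0)=1$. Write $\lambda:=\lambda_{1,p}^{\mathcal D}(B(r))$. Then $v$ solves
\[
(|v'|^{p-2}v')' + (m-1)\frac{C_c(t)}{S_c(t)}|v'|^{p-2}v' = -\lambda v^{p-1},
\]
where $S_c$ is the usual solution of $S''+cS=0$ with $S(0)=0$, $S'(0)=1$, and $C_c=S_c'$. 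The first step is to establish, by a Lima--Montenegro--Santos type ODE argument, that $v'\le 0$ on $(0,r)$. For $r\le r_\ast(c)$ we also need a sign condition, namely $(|v'|^{p-2}v')'\le 0$ (equivalently $v''\le 0$). The radius $r_\ast(c)$ is precisely the range where this concavity, or a suitable variant such as $v'' \le 0$ together with $v'\le0$, holds. We set $\eta(x)=v(t(x))$ on $\Omega$. Then $\eta>0$, $\eta\in\mathcal C^{1+\alpha}$ off $\mathrm{Cut}(p_0)$, and the Hausdorff condition lets the divergence/Barta argument ignore the cut locus, as in \cite{BessaMontenegro}.

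Next we compute $\triangle_p\eta$ on $M$. We have $\nabla^M\eta=v'\nabla^M t$, so $|\nabla^M\eta|^{p-2}\nabla^M\eta=|v'|^{p-2}v'|\nabla^M t|^{p-2}\nabla^M t$. Taking the divergence gives
\[
\triangle_p\eta=(|v'|^{p-2}v')'|\nabla t|^{p}+|v'|^{p-2}v'\big(|\nabla t|^{p-2}\triangle_M t+\langle\nabla|\nabla t|^{p-2},\nabla t\rangle\big).
\]
For a minimal immersion, $\triangle_M t=\sum_{i}\mathrm{Hess}_N t(e_i,e_i)$. The Hessian comparison under $K_N(\partial_t,\cdot)\le c$ gives
\[
\triangle_M t\ge \frac{C_c}{S_c}\big(m-|\nabla t|^2\big)\ge (m-1)\frac{C_c}{S_c}.
\]
The term $\langle\nabla|\nabla t|^{p-2},\nabla t\rangle$ is the delicate one. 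We rewrite it as $(p-2)|\nabla t|^{p-4}\mathrm{Hess}_M t(\nabla t,\nabla t)$, and since $\mathrm{Hess}_M t=\mathrm{Hess}_N t$ on tangent vectors plus a second fundamental form term $\langle \nabla^N t, \alpha\rangle$, we try to control it from below by the same Hessian comparison. Here $p\ge2$ is needed so that this term has the favorable sign when multiplied by $|v'|^{p-2}v'\le0$. The factor $|v'|^{p-2}v'$ is nonpositive, so lower bounds on $\triangle_M t$ turn into upper bounds on $\triangle_p\eta$.

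Collecting the terms and using $|\nabla t|\ge k$, $|\nabla t|\le 1$ and $(|v'|^{p-2}v')'\le 0$, we aim for
\[
-\triangle_p\eta\ge |\nabla t|^{p-2}\Big[-(|v'|^{p-2}v')'-(m-1)\tfrac{C_c}{S_c}|v'|^{p-2}v'\Big]\ge k^{p-2}\lambda\,\eta^{p-1}.
\]
Theorem \ref{thm:pbarta} then yields the inequality. For the equality case, suppose equality holds on a bounded $\Omega$. By the rigidity in Theorem \ref{thm:pbarta}, $\eta$ is a first eigenfunction, and every inequality used must be an equality. Then $|\nabla t|\equiv k$ must coincide with $|\nabla t|=1$, forced by equality in $\triangle_M t\ge(m-1)C_c/S_c$, which needs $|\nabla t|^2=1$; this gives $k=1$. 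Equality in the Hessian comparison then gives constant radial curvature $c$ and a totally geodesic radial structure, hence $\Omega$ is isometric to $B(r)$.

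The main obstacle will be the sign and ODE analysis: controlling the $(p-2)$ Hessian term and showing $(|v'|^{p-2}v')'\le0$ on $(0,r]$. This is exactly what determines $r_\ast(c)$. I would first try to prove it by comparing with the ODE above, deducing $(|v'|^{p-2}v')'=-\lambda v^{p-1}-(m-1)\frac{C_c}{S_c}|v'|^{p-2}v'$. I would then bound the second term using monotonicity of $\int_0^t S_c^{m-1}v^{p-1}$ against $S_c^{m-1}$, which is valid up to a critical radius depending on $c$.
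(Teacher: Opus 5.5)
There is a genuine gap, and it sits in the sign analysis you flag as the crux. Write $s=|\nabla^M t|$ and $\kappa=S_c'/S_c$.

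\textbf{The sign condition is the wrong one.} Your target inequality is correct, but concavity does not give it. If $(|v'|^{p-2}v')'\le 0$, then $s\le 1$ yields $-(|v'|^{p-2}v')'s^{p}\le -(|v'|^{p-2}v')'s^{p-2}$. That is an upper bound, so the term $(|v'|^{p-2}v')'s^{p-2}(1-s^2)$ is a deficit rather than a surplus. This deficit has to be absorbed by two pieces you discard: the extra $\kappa(1-s^2)$ in $\triangle_M t\ge\kappa(m-s^2)=(m-1)\kappa+\kappa(1-s^2)$, and the bound $\operatorname{Hess}_N t(\nabla^M t,\nabla^M t)\ge\kappa s^2(1-s^2)$ in the $(p-2)$-term. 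Doing this bookkeeping with the model ODE gives
\[
-\triangle_p\eta\ \ge\ s^{p-2}\lambda v^{p-1}+(p-1)\,s^{p-2}(1-s^2)\,|v'|^{p-2}\bigl(v''-\kappa v'\bigr).
\]
So what you need is $v''\ge\kappa v'$, equivalently $(p+m-2)\kappa|v'|^{p-2}v'+\lambda v^{p-1}\le 0$. This is exactly condition \eqref{restricmodel}, to which the paper reduces the problem and which it uses to define $r_\ast(c)$.

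Moreover, concavity fails near the boundary for every radius. At $t=r$ we have $v(r)=0$ and $v'(r)<0$ by Hopf, so the ODE gives $(|v'|^{p-2}v')'(r)=(m-1)\kappa(r)|v'(r)|^{p-1}>0$. Hence your plan to prove $(|v'|^{p-2}v')'\le 0$ on $(0,r]$ via monotonicity of $\int_0^t S_c^{m-1}v^{p-1}$ cannot succeed. No $r_\ast(c)$ can be described as a range of concavity.

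\textbf{The second fundamental form term in the $(p-2)$-part is not controlled.} You correctly write $\operatorname{Hess}_M t(e_1,e_1)=\operatorname{Hess}_N t(e_1,e_1)+\langle\nabla^N t,\mathrm{II}(e_1,e_1)\rangle$. However, the Hessian comparison on $N$ says nothing about $\langle(\nabla^N t)^{\perp},\mathrm{II}(e_1,e_1)\rangle$. Minimality only kills the trace $\sum_i\mathrm{II}(e_i,e_i)$, and this term has no sign; it is bounded only by $\sqrt{1-s^2}\,\|A\|$. It enters $-\triangle_p\eta$ with coefficient $(p-2)|v'|^{p-1}s^{p-2}$, so it disappears only for $p=2$. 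For $p>2$ you need an additional idea or hypothesis, such as a bound on $\|A\|$. Be aware that the paper does not supply one: its formula $\operatorname{Hess}\psi(e_1,e_1)=\omega''\cos^2\alpha+\omega'\sin^2\alpha\,\operatorname{Hess}_N t(\partial_\theta,\partial_\theta)$ also omits this term.

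\textbf{A smaller point on the cut locus.} The function $\eta$ is not $\mathcal C^{1+\alpha}$ with continuous $\triangle_p\eta$ across $\varphi^{-1}(\mathrm{Cut}(p_0))$, so Theorem \ref{thm:pbarta} does not apply as stated. The paper instead uses the vector-field version (Theorem \ref{Thm:BMS} with Remark \ref{rmk:BMS} and Proposition \ref{LemmaSobolev}). It applies this to $X=-|\nabla\psi|^{p-2}\nabla\psi/\psi^{p-1}$, which satisfies $(1-p)|X|^q+\operatorname{div}X=-\triangle_p\psi/\psi^{p-1}$ off the cut locus.
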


\begin{remark}
The condition $|\nabla^{M}t(x)| \ge k >0$ arises naturally in the study of complete submanifolds with controlled extrinsic geometry. In \cite{BessaBarbosaMontenegro2007}, the authors prove that if a complete 
submanifold $M^m \subset \mathbb{R}^n$ satisfies $t(x)|\alpha(x)| < 1$ outside a compact set, where $t(x)=\operatorname{dist}_{\mathbb{R}^n}(0,\varphi(x))$, then the extrinsic distance function has no critical points at infinity. In particular, $|\nabla^M t|$ does not vanish outside a compact set. More generally, the notion of asymptotically extrinsically tamed submanifolds introduced in \cite[Theorem 1.1]{BessaGimenoPalmer2017} provides a geometric framework ensuring that the radial direction does not become tangent to $M$ at infinity. Consequently, the angle function $\cos{\alpha(x)}=\langle \nabla^N t, \nu\rangle$ remains non-degenerate, which guarantees the validity of the hypothesis imposed in Theorem \ref{thm:pminsub_ball} for a larger class of minimal immersed submanifolds.
\end{remark}

\begin{remark}
When $p=2$, the constant $k^{p-2}$ equals $1$, and we do not need  the condition $|\nabla^{M}t(x)| \ge k >0$ and Theorem \eqref{thm:pminsub_ball} reduces precisely to the comparison result of \cite[Theorem 1.10]{BessaMontenegro}.
\end{remark}

The notion of stability plays a central role in the theory of minimal hypersurfaces.
In the classical case $p=2$, a domain $D$ in a minimal hypersurface $M \subset \mathbb{R}^{n+1}$ is said to be stable if the second variation of area 
is nonnegative or, equivalently, if the first Dirichlet eigenvalue of the 
Jacobi operator $L=\triangle + \|A\|^2$ is nonnegative. 
This condition has deep geometric consequences. In particular, Schoen \cite{SchoenYau1994} proved that if 
$B_M(p,r)$ is stable, then
\[
\|A\|^2(p) \le \frac{C}{r^2},
\]
for some universal constant $C>0$. Related curvature estimates and rigidity phenomena were further developed by Schoen--Simon--Yau \cite{SchoenSimonYau1975} and Simons \cite{Simons}. More recently, comparison results for the first Dirichlet eigenvalue on minimal submanifolds under radial curvature bounds (see \cite[Corollary 1.12]{BessaMontenegro2003}) provided a converse-type criterion for stability in terms of upper bounds on $\|A\|^2$. Motivated by these developments and by the nonlinear spectral theory of the $p$-Laplacian, we now establish a $p$-analogue of this stability criterion 

\begin{definition}[$p$-stability]
Let $\Omega \subset M$ be a domain and let $\mathcal V \in L^\infty(\Omega)$ 
with $\mathcal V \ge 0$. 
We say that $\Omega$ is $p$-stable with respect to the potential $\mathcal V$ if for every $u \in \mathcal{C}_0^\infty(\Omega)$ we have
\[
Q_p(u) \coloneq \int_{\Omega} \Big( |\nabla u|^p - \mathcal V(x) |u|^p \Big) \ge 0
\]

\end{definition}
\begin{cor}[A $p$-stability criterion]\label{p-stability}
Assume the hypotheses of Theorem \ref{thm:pminsub_ball}.
Let $\varphi : M^m \hookrightarrow N^n$ be a minimal immersion and let 
$\Omega$ be a connected component of 
$\varphi^{-1}(B_N(p_0,r))$.
Fix $2 \le p < \infty$ and set $\mathcal V(x) = \|A(x)\|^p$ where $A$ denotes the second fundamental form of the immersion. If
\[
\sup_{x \in \Omega} \|A(x)\|^p 
\leq k^{p-2} \cdot \lambda_{1,p}^{\mathcal D}(B(r)).
\]
Then, $\Omega$ is $p$-stable with respect to the potential $\|A\|^p$.
\end{cor}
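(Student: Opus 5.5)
The corollary follows directly from Theorem \ref{thm:pminsub_ball} combined with the variational characterization \eqref{eq:Rayleigh quotient} of the $p$-fundamental tone. Since $\Omega$ is a connected component of $\varphi^{-1}(B_N(p_0,r))$ and all hypotheses of Theorem \ref{thm:pminsub_ball} are assumed (including $|\nabla^M t|\ge k>0$ on $\Omega$ and $r\le r_\ast(c)$), that theorem yields
\[
\lambda_{1,p}(\Omega)\ \ge\ k^{p-2}\,\lambda_{1,p}^{\mathcal D}(B(r)).
\]
The task is then to turn this spectral lower bound into nonnegativity of $Q_p$.

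Fix $u\in\mathcal C_0^\infty(\Omega)$ with $u\not\equiv 0$; the case $u\equiv0$ is trivial. Because $\mathcal C_0^\infty(\Omega)\subset W^{1,p}_0(\Omega)$, the Rayleigh quotient characterization \eqref{eq:Rayleigh quotient}, which holds for arbitrary open $\Omega$, gives
\[
\int_\Omega |\nabla u|^p\,d\mu\ \ge\ \lambda_{1,p}(\Omega)\int_\Omega |u|^p\,d\mu\ \ge\ k^{p-2}\,\lambda_{1,p}^{\mathcal D}(B(r))\int_\Omega|u|^p\,d\mu .
\]
The hypothesis $\sup_\Omega\|A\|^p\le k^{p-2}\lambda_{1,p}^{\mathcal D}(B(r))$ gives the pointwise bound $\mathcal V(x)|u(x)|^p=\|A(x)\|^p|u(x)|^p\le k^{p-2}\lambda_{1,p}^{\mathcal D}(B(r))\,|u(x)|^p$. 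Integrating this bound and subtracting it from the previous inequality gives $Q_p(u)\ge 0$. Hence $\Omega$ is $p$-stable with respect to $\|A\|^p$.

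Two points should be checked. First, $\mathcal V=\|A\|^p$ must be in $L^\infty(\Omega)$ and nonnegative, as the definition of $p$-stability requires. Nonnegativity is obvious, and boundedness follows from the finiteness of the assumed supremum. Second, the corollary does not restate the hypothesis $|\nabla^M t|\ge k$, but it is included in "the hypotheses of Theorem \ref{thm:pminsub_ball}". There is no real obstacle here: all the analytic content sits in Theorem \ref{thm:pminsub_ball}, and the corollary is a one-line comparison of integrands.
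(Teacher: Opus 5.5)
Your proof is correct and follows the paper's argument: apply Theorem \ref{thm:pminsub_ball} to get $\lambda_{1,p}(\Omega)\ge k^{p-2}\lambda_{1,p}^{\mathcal D}(B(r))$, use the Rayleigh quotient to get $\int_\Omega|\nabla u|^p\ge\lambda_{1,p}(\Omega)\int_\Omega|u|^p$ for $u\in\mathcal C_0^\infty(\Omega)$, and bound $\int_\Omega\|A\|^p|u|^p$ by the supremum. You also correctly note that this Rayleigh inequality follows directly from the definition of the infimum, since $\mathcal C_0^\infty(\Omega)\subset W^{1,p}_0(\Omega)$, so the density argument the paper cites is not needed.
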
 
\begin{remark}
When $p=2$, we recover precisely the classical stability criterion of \cite{BessaMontenegro2003}. Indeed, in this case the functional $Q_2(u)$ coincides with the quadratic form associated with the Jacobi operator $L=\triangle+\|A\|^2$ of a minimal hypersurface in Euclidean space. Thus, the condition $\sup_{\Omega}\|A\|^2 \leq \lambda_{1,2}^{\mathcal D}(B(r))$ is equivalent to the nonnegativity of the first Dirichlet eigenvalue of $L$ on $\Omega$, which is exactly the stability criterion proved in \cite[Corollary 1.12]{BessaMontenegro2003}.
\end{remark}

Moreover, in the Euclidean case, Theorem \ref{thm:pminsub_ball} yields a quantitative lower bound for the extrinsic radius of any
connected component $\Omega$ of $\varphi^{-1}\!\big(B_{\mathbb{R}^{n}}(p_{0},r)\big)$ (see Figure \eqref{confinement}). More precisely, there exists a radius $\widetilde r>0$ such that $\varphi(\Omega)\not\subset B_{\mathbb{R}^{n}}(p_{0},\widetilde r)$.

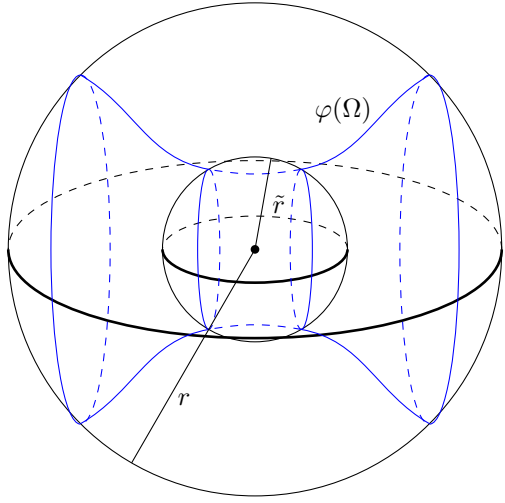
\begin{figure}[htb!]
    \centering
    \begin{tikzpicture}[scale=0.82,>=stealth]
		
		\path[fill] (0,0) circle (2 pt);
		\def\v{1.5}
		\path[draw,name path=c1] (0,0) circle (\v cm);
		\draw[dashed] (\v,0) arc (0:180:\v cm and 0.36*\v cm);
		\draw[line width=1pt] (\v,0) arc (360:180:\v cm and 0.36*\v cm);
		\def\k{4}
		\path[draw,name path=c2] (0,0) circle (\k cm);
		\draw[dashed] (\k,0) arc (0:180:\k cm and 0.36*\k cm);
		\draw[line width=1pt] (\k,0) arc (360:180:\k cm and 0.36*\k cm);
		
		\def\u{60}
		\path[draw,name path=arc1, blue] (\u:\v cm) arc (90:-90:0.12*\v cm and 0.865*\v cm);
		\path[draw,name path=arc2,dashed, blue] (\u:\v cm) arc (90:270:0.12*\v cm and 0.865*\v cm);
		\def\w{60}
		\path[draw,name path=arc1,dashed, blue] (180-\u:\v cm) arc (90:-90:0.12*\v cm and 0.865*\v cm);
		\path[draw,name path=arc2, blue] (180-\u:\v cm) arc (90:270:0.12*\v cm and 0.865*\v cm);
		
		\def\a{45}
		\path[draw,name path=arc1, blue] (\a:\k cm) arc (90:-90:0.12*\k cm and 0.708*\k cm);
		\path[draw,name path=arc2,dashed, blue] (\a:\k cm) arc (90:270:0.12*\k cm and 0.708*\k cm);
		
		\def\a{45}
		\path[draw,name path=arc1,dashed, blue] (180-\a:\k cm) arc (90:-90:0.12*\k cm and 0.708*\k cm);
		\path[draw,name path=arc2, blue] (180-\a:\k cm) arc (90:270:0.12*\k cm and 0.708*\k cm);
		
		\draw[blue] (\a:\k cm) to[out=210,in=10] (\u:\v cm);
		\draw[dashed,blue] (\u:\v cm) to[out=190,in=-10] (180-\u:\v cm);
		\draw[blue] (180-\u:\v cm) to[out=170,in=-30] (180-\a:\k cm);

		\draw[blue] (-\a:\k cm) to[out=150,in=-10] (-\u:\v cm);
		\draw[dashed,blue] (-\u:\v cm) to[out=170,in=10] (-180+\u:\v cm);
		\draw[blue] (-180+\u:\v cm) to[out=190,in=30] (-180+\a:\k cm);
		
		\draw (0,0) -- (-120:\k cm) node[pos=0.7,right] {$r$};
		\draw (0,0) -- (80:\v cm) node[pos=0.5,right] {$\Tilde{r}$};

		\node[right] (t1) at (70:0.60*\k) {$\varphi(\Omega)$};
		\end{tikzpicture}
\caption{Euclidian region of non-confinement for $\varphi(\Omega)$.}
    \label{confinement}
\end{figure}

\begin{cor}\label{radiuscor}
Assume the hypotheses of Theorem \ref{thm:pminsub_ball}
in the Euclidean case ($c=0$).
Let $\varphi : M^m \hookrightarrow \mathbb{R}^n$ be a minimal immersion and let 
$\Omega_{r}$ be a connected component of 
$\varphi^{-1}(B_{\mathbb{R}^n}(p_0,r))$.
Then
\[
r \geq \left( k^{p-2} \cdot \frac{\lambda_{1,p}^{\mathcal D}(B_1^m)}{\lambda_{1,p}(\Omega_{r})}\right)^{1/p},
\]
where $B_1^m$ denotes the unit ball in $\mathbb{R}^m$.
\end{cor}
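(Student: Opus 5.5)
The plan is to combine Theorem \ref{thm:pminsub_ball} for $c=0$ with the scaling behaviour of the Dirichlet $p$-fundamental tone of Euclidean balls. In the Euclidean case the radial curvature hypothesis holds trivially with $c=0$. The cut locus is empty, so $\mathcal{H}^{n-1}(\mathrm{Cut}(p_0)\cap B)=0$. There is no restriction coming from $r_\ast(0)$, or, if the restriction is genuine, we work in the admissible range $r\le r_\ast(0)$ granted by the hypotheses. Applying Theorem \ref{thm:pminsub_ball} to the connected component $\Omega_r$ then gives
\[
\lambda_{1,p}(\Omega_r)\ \ge\ k^{p-2}\,\lambda_{1,p}^{\mathcal D}\big(B^m(r)\big),
\]
where $B^m(r)\subset\mathbb{R}^m$ is the Euclidean ball of radius $r$.

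The second step is the scaling identity
\[
\lambda_{1,p}^{\mathcal D}\big(B^m(r)\big)=r^{-p}\,\lambda_{1,p}^{\mathcal D}(B_1^m).
\]
To prove it, use the Rayleigh quotient \eqref{eq:Rayleigh quotient}. The dilation $x\mapsto rx$ gives a bijection $u\mapsto u_r(x)=u(x/r)$ from $W^{1,p}_0(B_1^m)$ onto $W^{1,p}_0(B^m(r))$. Under this map,
\[
\int_{B^m(r)}|\nabla u_r|^p = r^{m-p}\int_{B_1^m}|\nabla u|^p,
\qquad
\int_{B^m(r)}|u_r|^p = r^{m}\int_{B_1^m}|u|^p .
\]
So each quotient is multiplied by $r^{-p}$, and taking infima gives the identity.

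Combining the two steps yields $\lambda_{1,p}(\Omega_r)\ge k^{p-2}r^{-p}\lambda_{1,p}^{\mathcal D}(B_1^m)$. This bound has content only when $\lambda_{1,p}(\Omega_r)>0$, which follows from the same inequality because its right-hand side is positive. Solving for $r$ gives
\[
r^p\ \ge\ k^{p-2}\,\frac{\lambda_{1,p}^{\mathcal D}(B_1^m)}{\lambda_{1,p}(\Omega_r)},
\]
and taking $p$-th roots gives the stated bound.

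The main obstacle is the range restriction $r\le r_\ast(0)$ in Theorem \ref{thm:pminsub_ball}. The statement ``assume the hypotheses of Theorem \ref{thm:pminsub_ball}'' should be read as including it. Otherwise one needs to know that for $c=0$ the comparison holds at every radius; I expect this follows from homogeneity, since the flat model is scale invariant. With that restriction understood, the remaining steps are a direct substitution together with the elementary scaling computation.
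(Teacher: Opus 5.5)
Your proof is correct and is essentially the paper's argument: apply Theorem~\ref{thm:pminsub_ball} with $c=0$ to get $\lambda_{1,p}(\Omega_r)\ge k^{p-2}\lambda_{1,p}^{\mathcal D}(B^m(r))$, prove $\lambda_{1,p}^{\mathcal D}(B^m(r))=r^{-p}\lambda_{1,p}^{\mathcal D}(B_1^m)$ by dilating the Rayleigh quotient (the paper uses the inverse map $u\mapsto u(r\,\cdot)$ from $W^{1,p}_0(B^m(r))$ to $W^{1,p}_0(B_1^m)$), and rearrange. You read ``the hypotheses of Theorem~\ref{thm:pminsub_ball}'' as including the range condition on $r$, as the paper does, and you check $\lambda_{1,p}(\Omega_r)>0$ before dividing, a point the paper leaves implicit.
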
 

Another noteworthy result in the same direction of providing lower bounds for the fundamental tone is presented in \cite{BessaMontenegro2003}. The estimates obtained by the authors apply in a setting where the immersion $M \hookrightarrow N$ is locally bounded mean curvature. 
Subsequently, Bessa--Silvana \cite[Definition 1.5]{Bessa-Silvana} introduced the following notion of locally  bounded mean curvature $M \stackrel{\varphi}{\hookrightarrow} N\times\mathbb{R} $.
\begin{definition}
An isometric immersion $\varphi \colon M \hookrightarrow N \times \mathbb{R}$ is said to have locally bounded mean curvature if, for every $p \in N$ and every $r>0$, the quantity
\[
h(p,r) \coloneqq 
\sup \left\{ |H(x)| \; ; \; x \in \varphi(M)\cap \bigl( B_N(p,r)\times \mathbb{R} \bigr) \right\}
\]
is finite. Here, $B_N(p,r)$ denotes the geodesic ball in $W$ of radius $r$ centered at $p$.
\end{definition} 
Under these assumptions, the authors of \cite[Theorem 1.6]{Bessa-Silvana} establish an estimate for the bottom of the spectrum by means of \cite[Lemma 2.3]{BessaMontenegro2003} for a complete immersed $m$-submanifold of $N \times \mathbb{R}$ with
locally bounded mean curvature. In the present work, we prove an analogous estimate for the $p$-fundamental tone by using the result of \cite[Theorem 1.1]{LimaMontenegroSantos2010}.
\begin{theorem}\label{thmlowerptone}
Let $\varphi\colon M \hookrightarrow N \times \mathbb{R}$ be a complete  $m$-submanifold immersed with locally bounded mean curvature into $ N \times \mathbb{R}$. Assume that $N$ has a radial sectional curvature bounded above by $K_N(\partial_t,v) \leq c$ along the geodesics issuing from a point $x_0\in N$. Let $\Omega(r)\subset \varphi^{-1}\left(B_N(x_0,r)\times\mathbb{R}\right)$ be a connected component and suppose that $r \leq \min \left\{\operatorname{inj}_N(x_0), \pi/2\sqrt{c} \right\}$. Additionally, suppose that $r$ is chosen
\begin{enumerate}
\item If $|h(x_0,r)|< \Lambda < \infty$ let $r \leq (S_c / S'_c)(\Lambda/(m-2))$;
\item If $\displaystyle \lim_{r \to \infty}h(x_0,r) = \infty$ let $r \leq (S_c / S'_c)(h(x_0,r_{0})/(m-2))$, where $r_0$ is so that $(m-2)(S'_c/S_c)-h(x_0,r_0)=0.$
\end{enumerate}
Then, for every $p>1$, whenever either $1$ or $2$ holds, it follows that
\begin{equation}
 \lambda_{1,p}(\Omega(r)) \geq    \frac{1}{p^{p}}
\left((m-2)\frac{S'_c}{S_c}(r)-h(x_0,r)\right)^{p} > 0.
\end{equation}
\end{theorem}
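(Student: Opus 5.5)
The plan is to prove Theorem \ref{thmlowerptone} through a divergence-type lower bound for the $p$-fundamental tone, in the spirit of \cite[Theorem 1.1]{LimaMontenegroSantos2010}. That result states: if $X$ is a bounded vector field on $\Omega$ with $\operatorname{div} X \ge c_0 > 0$ (in the weak sense) and $\|X\|_\infty \le 1$, then $\lambda_{1,p}(\Omega) \ge (c_0/p)^p$.

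We can also derive this directly. Take $u \in \mathcal C^\infty_0(\Omega)$, integrate $\operatorname{div}(|u|^p X)$, and use Hölder:
\[
c_0\!\int |u|^p \le \int |u|^p\operatorname{div}X = -p\!\int |u|^{p-2}u\langle \nabla u, X\rangle \le p\Big(\int|u|^p\Big)^{\frac{p-1}{p}}\Big(\int|\nabla u|^p\Big)^{\frac1p}.
\]
Rearranging gives the Rayleigh-quotient bound. This is the $p$-analogue of Barta's inequality (Theorem \ref{thm:pbarta}) in divergence form.

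\textbf{Choice of vector field.} Let $t(y) = \operatorname{dist}_N(x_0,y)$ and write $\pi_1 \colon N \times \mathbb R \to N$ for the projection. Set $\tilde t = t \circ \pi_1 \circ \varphi$ on $M$, and take
\[
X = \nabla^M \tilde t,
\]
so that $|X| \le 1$. Since $r < \operatorname{inj}_N(x_0)$, $t$ is smooth away from $x_0$. At $x_0$ itself we handle the singularity by a standard cutoff or capacity argument, which is harmless when $m \ge 2$.

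\textbf{Computing the divergence.} The chain rule for immersions gives
\[
\Delta_M \tilde t = \sum_{i=1}^m \operatorname{Hess}_{N\times\mathbb R}\, \bar t\,(e_i,e_i) + \langle \vec H, \bar\nabla \bar t\rangle,
\]
where $\bar t = t \circ \pi_1$ and $\{e_i\}$ is an orthonormal frame of $TM$. The Hessian of $\bar t$ vanishes in the $\partial_s$ direction of the $\mathbb R$ factor and in the radial direction. In the directions tangent to $N$ and orthogonal to $\partial_t$, the Hessian comparison theorem with $K_N(\partial_t,v) \le c$ gives the lower bound $(S'_c/S_c)(t)\,|v^\perp|^2$.

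Among the $m$ vectors $e_i$, at most two directions can be lost: the radial $\partial_t$ direction and the vertical $\partial_s$ direction. Consequently
\[
\sum_{i=1}^m \operatorname{Hess}\bar t\,(e_i,e_i) \ge (m-2)\frac{S'_c}{S_c}(t).
\]
This is the computation in \cite[Lemma 2.3]{BessaMontenegro2003} and \cite{Bessa-Silvana}. Together with $|\langle \vec H, \bar\nabla \bar t\rangle| \le |H| \le h(x_0,r)$, we get on $\Omega(r)$
\[
\operatorname{div} X \ge (m-2)\frac{S'_c}{S_c}(t) - h(x_0,r) \ge (m-2)\frac{S'_c}{S_c}(r) - h(x_0,r).
\]
The last step uses two facts: $S'_c/S_c$ is decreasing, and $t \le r \le \pi/(2\sqrt c)$ keeps $S'_c \ge 0$. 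If $H$ is normalized so that the mean curvature vector carries a factor $m$, the constant should be adjusted accordingly.

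\textbf{Positivity and conclusion.} The hypotheses 1 or 2 on $r$ are exactly what ensures $c_0 := (m-2)(S'_c/S_c)(r) - h(x_0,r) > 0$. Indeed, $(S'_c/S_c)(r) > h/(m-2)$ is equivalent, via the monotonicity of $S_c/S'_c$, to $r < (S_c/S'_c)(h/(m-2))$, where that expression is read as the inverse function. In case 1 we use $h < \Lambda$. In case 2 we use the choice of $r_0$ together with the monotonicity of $h(x_0,\cdot)$, which gives $h(x_0,r) \le h(x_0,r_0)$ for $r \le r_0$. Applying the divergence lemma then yields
\[
\lambda_{1,p}(\Omega(r)) \ge \frac{c_0^p}{p^p} > 0.
\]

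\textbf{Main obstacle.} The delicate step is the Hessian trace estimate with the factor $(m-2)$: it requires a careful decomposition of each $e_i$ into its radial, vertical, and spherical components in $N \times \mathbb R$, and a check that the total lost weight is at most $2$. This follows from $\sum_i |e_i^\perp|^2 \ge m-2$, since the projection onto the orthogonal complement of a 2-plane removes at most 2 from the trace.
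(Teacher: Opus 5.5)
Your proposal is correct and follows essentially the same route as the paper. It uses the same field $X=\nabla^M\tilde t$ and the same Jorge--Koutroufiotis decomposition, in which Hessian comparison loses only the radial and vertical directions, giving $\operatorname{div}X\ge (m-2)\frac{S_c'}{S_c}(r)-h(x_0,r)$, followed by the Lima--Montenegro--Santos bound, which you usefully re-derive via H\"older (and you are more careful than the paper about the singular set $\{\tilde t=0\}$). One nitpick: in case 2 your argument only yields $c_0\ge 0$, and $c_0=0$ exactly at $r=r_0$, so the strict positivity claimed in the statement needs $r<r_0$ --- this is a defect of the statement rather than of your method.
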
 

As a consequence of Theorem \ref{thmlowerptone}, which provides a lower bound for the $p$-fundamental tone, we obtain the following $p$-stability criterion for immersions with locally bounded mean curvature
\begin{cor}\label{corpstaby}
Assume the hypotheses of Theorem \ref{thmlowerptone}. Let $\varphi \colon M \hookrightarrow \mathbb{R}^{n}$ be a minimal immersion into Euclidean space. Suppose that the second fundamental form $A$ in $\Omega_r$ satisfies
\begin{equation*}
    \|A\| \leq \frac{m-1}{pr}.
\end{equation*}
Then $\Omega_r$ is $p$-stable with respect to the potential $\|A\|^{p}$.
\end{cor}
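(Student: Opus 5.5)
The plan is to reduce $p$-stability to a lower bound on the $p$-fundamental tone, in the same way Corollary \ref{p-stability} follows from Theorem \ref{thm:pminsub_ball}. For $u\in\mathcal{C}_0^\infty(\Omega_r)$, the variational characterization \eqref{eq:Rayleigh quotient} gives
\[
\int_{\Omega_r}|\nabla u|^p\,d\mu \ge \lambda_{1,p}(\Omega_r)\int_{\Omega_r}|u|^p\,d\mu .
\]
It then suffices to show that $\sup_{\Omega_r}\|A\|^p\le \lambda_{1,p}(\Omega_r)$. Indeed, in that case
\[
Q_p(u)\ge \Big(\lambda_{1,p}(\Omega_r)-\sup_{\Omega_r}\|A\|^p\Big)\int_{\Omega_r}|u|^p\,d\mu\ge 0 .
\]
Note also that $\|A\|^p\in L^\infty(\Omega_r)$ by the assumed bound, so the potential is admissible in the definition of $p$-stability.

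The lower bound on $\lambda_{1,p}(\Omega_r)$ comes from Theorem \ref{thmlowerptone}. A minimal immersion into $\mathbb{R}^n=\mathbb{R}^{n-1}\times\mathbb{R}$ has $H\equiv 0$, so it has locally bounded mean curvature with $h(x_0,r)=0$, and hypothesis 1 holds for any $\Lambda>0$. Here $N=\mathbb{R}^{n-1}$ and $c=0$, so $S_c(t)=t$ and $S'_c/S_c=1/t$. The curvature and injectivity-radius restrictions on $r$ are then vacuous, and the theorem yields
\[
\lambda_{1,p}(\Omega_r)\ \ge\ \frac{1}{p^p}\Big(\frac{m-2}{r}\Big)^p .
\]

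The obstacle is the constant. The corollary assumes $\|A\|\le (m-1)/(pr)$, so I need $\lambda_{1,p}(\Omega_r)\ge \big((m-1)/(pr)\big)^p$, and the Theorem \ref{thmlowerptone} bound with $m-2$ is weaker. I expect that $m-2$ reflects the product splitting $N\times\mathbb{R}$. In the Euclidean setting I would redo that estimate directly with the extrinsic distance in $\mathbb{R}^n$: take the vector field $X=\nabla^M(t^2/2)$, or equivalently $\eta=t$, for which $\operatorname{div}_M X=m$ when $H=0$. In the Lima--Montenegro--Santos style argument \cite{LimaMontenegroSantos2010}, the field $X/t$ has $\operatorname{div}_M(X/t)\ge (m-1)/t\ge (m-1)/r$ and $|X/t|\le 1$. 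The standard divergence argument then gives
\[
\lambda_{1,p}(\Omega_r)\ge \Big(\frac{\inf\operatorname{div}_M(X/t)}{p\,\sup|X/t|}\Big)^p\ge\Big(\frac{m-1}{pr}\Big)^p .
\]
That divergence argument integrates $\operatorname{div}(|u|^pX/t)$ and applies Hölder. If the paper's Theorem \ref{thmlowerptone} is intended to give exactly this in the Euclidean case, I would simply invoke it with the appropriate reading of the constant.

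Once the sharper bound $\lambda_{1,p}(\Omega_r)\ge ((m-1)/(pr))^p$ is in hand, combining it with $\|A\|^p\le ((m-1)/(pr))^p$ gives $Q_p(u)\ge 0$ for every $u\in\mathcal{C}_0^\infty(\Omega_r)$, which is the claimed $p$-stability.
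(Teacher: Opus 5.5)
Your reduction $Q_p(u)\ge\bigl(\lambda_{1,p}(\Omega_r)-\sup_{\Omega_r}\|A\|^p\bigr)\int_{\Omega_r}|u|^p$ is the same as the paper's, and your concern about the constant is justified. The paper's proof cites \eqref{lowestimateptone}, but with $c=0$ and $h\equiv 0$ that estimate gives only $\bigl((m-2)/(pr)\bigr)^p$. So the hedge of reading the constant ``appropriately'' is not available.

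The gap is in your repair. Under the hypotheses of Theorem \ref{thmlowerptone}, $\Omega_r$ is a component of $\varphi^{-1}\bigl(B_{\mathbb{R}^{n-1}}(x_0,r)\times\mathbb{R}\bigr)$, which is a solid cylinder and not a ball. The Euclidean distance $t$ to a point is unbounded there, so the step $(m-1)/t\ge(m-1)/r$ fails. Your argument would only handle components of $\varphi^{-1}(B_{\mathbb{R}^n}(x_0,r))$, which are smaller domains.

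Suppose you switch to the distance $\tilde t$ to the axis $\{x_0\}\times\mathbb{R}$, so that $\tilde t<r$ on $\Omega_r$. Then the normalized field only gives
\[
\Delta_M\tilde t=\frac{m-|\nabla^M\tilde t|^2-|\partial_s^\top|^2}{\tilde t}\ \ge\ \frac{m-2}{\tilde t}.
\]
Equality holds for an $m$-plane containing the axis, because both $\nabla\tilde t$ and $\partial_s$ can be tangent to $M$. This is exactly where the $m-2$ comes from, so the $X/t$ route reproduces the weaker constant.

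The missing idea is to \emph{not} normalize. Take $X=\nabla^M(\tilde t^{\,2}/2)$, the tangential part of $(x-x_0,0)$. It is smooth on all of $\Omega_r$, since $\tfrac12|x-x_0|^2$ is smooth, including across the axis. Its Euclidean Hessian is $|v|^2-\langle v,\partial_s\rangle^2$, so minimality gives $\operatorname{div}_M X=m-|\partial_s^\top|^2\ge m-1$, while $|X|\le\tilde t<r$. Theorem \ref{LimaSantosMontenegro} then gives
\[
\lambda_{1,p}(\Omega_r)\ge\Bigl(\frac{m-1}{pr}\Bigr)^p\ge\sup_{\Omega_r}\|A\|^p .
\]
Equivalently, you can integrate $\operatorname{div}(|u|^pX)$ and apply H\"older, as you describe. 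Your first paragraph then completes the proof.
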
 

Finally, by means of Theorem \ref{thm:pbarta}, we derive a nonlinear Kazdan--Kramer \cite{KazdanKramer} type characterization of the first Dirichlet eigenvalue of the $p$-Laplacian. In this framework, the existence of a blow-up solution constrains the source term to belong to a sharp spectral interval determined by $\lambda_{1,p}^{\mathcal D}(M)$, with spectral rigidity in the extremal regimes. Observe that, when $p=2$, we recover exactly the result of \cite[Theorem 1.13]{BessaMontenegro}.

\begin{theorem}\label{Thm:pKazdame}
Let $M$ be a bounded Riemannian manifold with smooth boundary, and let 
$\Psi \in \mathcal{C}^0(\overline{M})$. Consider the problem
\begin{equation}\label{intro:semilineareq}
\begin{cases}
\triangle_p u - (p-1)|\nabla u|^p = \Psi & \text{in } M;\\
u = +\infty & \text{on } \partial M.
\end{cases}
\end{equation}
If \eqref{intro:semilineareq} admits a smooth solution, then $\inf_M \Psi \leq \lambda_{1,p}^{\mathcal{D}}(M) \leq \sup_M \Psi$. Moreover, if either $\inf_M \Psi = \lambda_{1,p}^{\mathcal{D}}(M)$ or $\sup_M \Psi = \lambda_{1,p}^{\mathcal{D}}(M)$, then $\Psi \equiv \lambda_{1,p}^{\mathcal{D}}(M)$. Finally, if $\Psi \equiv \lambda$ is constant, then problem \eqref{intro:semilineareq} admits a solution if and only if $\lambda = \lambda_{1,p}^{\mathcal{D}}(M)$.
\end{theorem}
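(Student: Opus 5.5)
The plan is a Cole--Hopf-type substitution that turns \eqref{intro:semilineareq} into a $p$-Laplacian eigenvalue-type equation for a positive function vanishing on $\partial M$. Then Theorem \ref{thm:pbarta} and its upper-bound counterpart apply. Set $\eta = e^{-u}$. Since $u\to+\infty$ at $\partial M$, $\eta>0$ in $M$, $\eta$ extends continuously by $0$ to $\partial M$, and $\eta\in \mathcal C^{1+\alpha}(M)\cap\mathcal C^0(\overline M)$. We have $\nabla\eta=-\eta\nabla u$, so $|\nabla\eta|^{p-2}\nabla\eta = -\eta^{p-1}|\nabla u|^{p-2}\nabla u$. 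Hence
\begin{equation*}
\triangle_p\eta = -\eta^{p-1}\triangle_p u-(p-1)\eta^{p-2}\langle\nabla\eta,|\nabla u|^{p-2}\nabla u\rangle = -\eta^{p-1}\bigl(\triangle_p u-(p-1)|\nabla u|^p\bigr) = -\eta^{p-1}\Psi .
\end{equation*}
Thus $-\triangle_p\eta/\eta^{p-1}=\Psi$ in $M$, and $\triangle_p\eta\in\mathcal C^0(M)$.

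For the lower inequality, apply Theorem \ref{thm:pbarta} directly: $\lambda_{1,p}^{\mathcal D}(M)=\lambda_{1,p}(M)\ge\inf_M\Psi$.

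For the upper inequality, Theorem \ref{thm:pbarta} gives only one side, so I would prove the reverse Barta bound with a Picone-type argument. Let $v>0$ be the first Dirichlet eigenfunction, with $\int_M v^p=1$. The Picone identity for the $p$-Laplacian gives $|\nabla v|^p\ge\langle|\nabla\eta|^{p-2}\nabla\eta,\nabla(v^p/\eta^{p-1})\rangle$ pointwise, with equality iff $v/\eta$ is constant. The main obstacle is justifying the integration by parts near $\partial M$, where $\eta\to0$. I would integrate over the sublevel domains $\{\eta>\varepsilon\}$, which amounts to truncating $u$ at large levels, and pass to the limit, using that $v^p/\eta^{p-1}$ is controlled via Hopf's lemma. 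The smoothness of $\partial M$ gives $v\sim \mathrm{dist}(\cdot,\partial M)$. A delicate point is bounding $\eta$ from below near the boundary; if this is problematic, I would instead swap roles and apply Picone with $\eta$ as the test function in the weak equation for $v$. Granting the boundary terms vanish in the limit, the integrated identity yields
\begin{equation*}
\lambda_{1,p}^{\mathcal D}(M)=\int_M|\nabla v|^p\ge\int_M-\frac{\triangle_p\eta}{\eta^{p-1}}v^p=\int_M\Psi v^p .
\end{equation*}
This is false as an upper bound in the needed direction, so for the upper inequality I would instead run the argument with $\eta$ as a competitor in the Rayleigh quotient. The better route is to reverse Picone: since $v^p/\eta^{p-1}$ is admissible, we get $\lambda_{1,p}^{\mathcal D}\ge\int\Psi v^p\ge\inf\Psi$ again. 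For the sup bound, I would use the weak equation of $v$ tested against $\eta^p/v^{p-1}$ (Picone with roles exchanged), which gives $\int|\nabla\eta|^p\ge\lambda_{1,p}^{\mathcal D}\int\eta^p$. I would combine this with the identity $\int|\nabla\eta|^p=\int\Psi\eta^p$, obtained from integrating $-\triangle_p\eta\,\eta$. This yields $\sup\Psi\int\eta^p\ge\lambda_{1,p}^{\mathcal D}\int\eta^p$. Here too the boundary justification, including the integrability of $|\nabla\eta|^p$, is where the truncation work lies.

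For the rigidity, if $\inf\Psi=\lambda_{1,p}^{\mathcal D}$, then $\int(\Psi-\lambda)v^p=0$ with $\Psi-\lambda\ge0$ and $v>0$, so $\Psi\equiv\lambda$; the sup case is analogous using $\eta^p$. For constant $\Psi\equiv\lambda$, the first part forces $\lambda=\lambda_{1,p}^{\mathcal D}$. Conversely, take a positive first eigenfunction $v$, smooth enough in the interior, and set $u=-\log v$. The computation above reversed shows $u$ solves the equation, and $u=+\infty$ on $\partial M$ since $v=0$ there.
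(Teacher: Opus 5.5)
Your argument is correct, and for the two-sided bound it follows the paper. The paper makes the same substitution ($\varphi=e^{-u}$, your $\eta$) and obtains $-\triangle_p\varphi/\varphi^{p-1}=\Psi$. It applies Theorem~\ref{thm:pbarta} to get $\inf_M\Psi\le\lambda_{1,p}^{\mathcal D}(M)$. For the upper bound it tests the equation with $\varphi$ itself and compares with the Rayleigh quotient, which is what your paragraph finally settles on once the detours through reversed Picone and Hopf's lemma are dropped.

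The genuine difference is in the rigidity. The paper treats only the case $\inf_M\Psi=\lambda_{1,p}^{\mathcal D}(M)$, via the equality case of Theorem~\ref{thm:pbarta}: $\varphi$ is a multiple of the first eigenfunction, hence $\Psi\varphi^{p-1}=\lambda_{1,p}^{\mathcal D}(M)\varphi^{p-1}$. You instead integrate. For the $\inf$ case you use $\lambda_{1,p}^{\mathcal D}(M)\int_M v^p\ge\int_M\Psi v^p$ against the positive eigenfunction $v$. For the $\sup$ case you use $\lambda_{1,p}^{\mathcal D}(M)\int_M\eta^p\le\int_M|\nabla\eta|^p=\int_M\Psi\eta^p$. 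This avoids the pointwise analysis of equality in Picone's identity. It also proves the $\sup$ rigidity, which the statement asserts but the paper's proof never addresses.

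The two justifications you leave open are routine and need neither Hopf's lemma nor a lower bound for $\eta$ near $\partial M$.
\begin{itemize}
\item The proof of Theorem~\ref{thm:pbarta} already gives $\int_M|\nabla\phi|^p\ge\int_M\Psi\phi^p$ for $0\le\phi\in\mathcal C_0^\infty(M)$. This passes to $v$ by density in $W_0^{1,p}(M)$, since both sides are continuous in that norm.
\item Test the equation with $(\eta-\varepsilon)^+$, which is compactly supported in $M$ because $\eta\in\mathcal C^0(\overline M)$ vanishes on $\partial M$. This gives $\int_{\{\eta>\varepsilon\}}|\nabla\eta|^p=\int_M\Psi\eta^{p-1}(\eta-\varepsilon)^+$. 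Letting $\varepsilon\to0$ yields $|\nabla\eta|\in L^p(M)$, $\eta\in W_0^{1,p}(M)$ (as the $W^{1,p}$-limit of $(\eta-\varepsilon)^+$), and the energy identity. The paper merely asserts this point.
\end{itemize}

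Your display $\lambda_{1,p}^{\mathcal D}(M)\ge\int_M\Psi v^p$ is not ``false''. It is simply the wrong direction for the upper bound, and it is exactly what your $\inf$-rigidity uses. Finally, as in the paper, $u=-\log v$ in the converse is only a $\mathcal C^{1,\alpha}_{\mathrm{loc}}$ weak solution, so ``solution'' in the last assertion has to be read in that sense.
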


\section{Bartas’s Theorem for \texorpdfstring{\(p\)--}aLaplacian}

Picone's classical inequality was initially developed in \cite[p. 18]{Picone1909} to provide a new proof of Sturm's Theorem. The tool has become quite useful, being used in various contexts (see \cite{AllegrettoHuang1998, AroraGiacomoniWarnault2020, Jaros2015, lindqvist, Tadie2009}). The inequality in its classical form establishes that: if $u: \Omega \to \mathbb{R}$ and $v: \Omega \to \mathbb{R}$ are two differentiable functions such that $u \geq 0$ and $v > 0$, then in each connected component of $\Omega$ we have
\begin{equation}\label{piconeclassico}
| \nabla u |^2 + \dfrac{u^2}{v^2} | \nabla v |^2 - 2 \dfrac{u}{v} \left\langle \nabla u, \nabla v \right\rangle = | \nabla u |^2 - \left\langle \nabla v, \nabla \left( \dfrac{u^2}{v} \right) \right\rangle \geq 0.
\end{equation}
Moreover, the equality in \eqref{piconeclassico} holds if, and only if, $u$ is proportional to $v$. Note that the lower bound of the Barta inequality given in \eqref{eq:barta} can be obtained through \eqref{piconeclassico} by taking $u$ as the first eigenfunction of the Laplacian and integrating by parts the right-hand side of the equation \eqref{piconeclassico}. Subsequently, the inequality \eqref{piconeclassico} was extended in the following work \cite[Theorem 1.1]{AllegrettoHuang1998} ensuring that if $u \geq 0$ and $v > 0$ are differentiable functions, then
\begin{equation}\label{p-picone}
| \nabla u |^p + (p-1)\dfrac{u^p}{v^p} | \nabla v |^p - p \dfrac{u^{p-1}}{v^{p-1}} | \nabla v |^{p-2} \left\langle \nabla u, \nabla v \right\rangle = | \nabla u |^p - | \nabla v |^{p-2} \left\langle \nabla v, \nabla \left( \dfrac{u^p}{v^{p-1}} \right) \right\rangle \geq 0.
\end{equation}
Furthermore, the equality holds if and only if $u$ is proportional to $v$ in each connected component of the domain $\Omega$. When $p=2$ in \eqref{p-picone} we recover the classical Picone inequality \eqref{piconeclassico}. In Section 2 of \cite{AllegrettoHuang1998} we observed that from the inequality given in \eqref{p-picone} we obtain the following results for the $p$-Laplacian: simplicity of the principal eigenvalue, Barta's inequalities, Hardy's inequalities, and nonexistence of solutions. Based on \cite[Lemma 2.1]{AllegrettoHuang1998} the following result is a natural extension of \cite[Proposition 2.1]{BessaMontenegro} and \cite[Corollary 2.1]{AllegrettoHuang1998}, using, respectively, the same approximation technique in \cite{BessaMontenegro} and Picone's inequality for the $p$-Laplacian \eqref{p-picone} to extend Barta's inequality for the $p$-fundamental tone in bounded Riemannian domains.

\medskip
\noindent \textbf{Theorem 1.2.} \textit{Let $(M,g)$ be a smooth Riemannian manifold, and let $\Omega \subset M$ be a domain. Assume that $\eta \in \mathcal{C}^{1+\alpha}(\Omega)\cap\mathcal{C}^{0}(\overline{\Omega})$ with $\triangle_p \eta \in\mathcal{C}^0(\Omega)$
and $\eta>0$ in $\Omega$.
Then the $p$-fundamental tone $\lambda_{1,p}(\Omega)$ satisfies
\begin{equation}\label{eq:pBarta}
\lambda_{1,p}(\Omega)
\ge
\inf_{\Omega}
\left\{
-\frac{\triangle_p \eta}{\eta^{p-1}}
\right\}.
\end{equation}
Moreover, if $\Omega$ is bounded, the equality in \eqref{eq:pBarta}
holds if and only if $\eta=\beta u$ for some constant $\beta>0$,
where $u \in W^{1,p}_0(\Omega)$ is a nonnegative minimizer of the Rayleigh quotient.
}
\begin{proof} Let $\Omega \subset M$ be a domain. Let $\varphi \in \mathcal{C}_0^{\infty}(\Omega)$ with $\varphi \ge 0$ and choose $\eta \in \mathcal{C}^{1+\alpha}(\Omega)$ with $\triangle_p \eta \in \mathcal{C}^{0}(\Omega)$ such that $\eta > 0$ in $\Omega$. Since $\operatorname{supp}(\varphi) \Subset \Omega$ and $\eta$ is continuous and strictly positive in $\Omega$, there exists $c>0$ such that $\eta \ge c$ on $\operatorname{supp}(\varphi)$. Hence, the function $\varphi^p/\eta^{p-1}$ is well defined and belongs to $W^{1,p}(\Omega)$. Moreover, since its support is contained compactly in $\Omega$, we conclude that $\varphi^p/\eta^{p-1} \in W_0^{1,p}(\Omega)$. Therefore, it may be used as a test function in the weak formulation of the $p$-Laplacian. Applying Picone’s identity \eqref{p-picone} and integrating over $\Omega$, we obtain
\begin{equation}\label{huang&allegeto1}
\begin{aligned}
   0 &\leq  \int_{\Omega} \left( | \nabla \varphi |^p + (p-1)\dfrac{ \varphi^p}{\eta^p} | \nabla \eta |^p - p \dfrac{ \varphi^{p-1}}{\eta^{p-1}} | \nabla \eta |^{p-2} \left\langle \nabla  \varphi, \nabla \eta \right\rangle \right)  d\mu \\
   &=\int_{\Omega} | \nabla \varphi |^p - | \nabla \eta |^{p-2} \left\langle \nabla \eta, \nabla \left( \dfrac{ \varphi^p}{\eta^{p-1}} \right) \right\rangle d\mu \\
   &= \int_{\Omega} | \nabla \varphi |^p d\mu - \int_{\Omega} \left\langle | \nabla \eta |^{p-2} \nabla \eta, \nabla \left( \dfrac{ \varphi^p}{\eta^{p-1}} \right) \right\rangle d\mu \\
    &= \int_{\Omega} | \nabla \varphi |^p d\mu + \int_{\Omega}(\triangle_p \eta ) \cdot \dfrac{ \varphi^p}{\eta^{p-1}} \ d\mu  \\
    &\leq \int_{\Omega} | \nabla \varphi |^p d\mu - \inf_{\Omega} \left\{-\dfrac{\triangle_p \eta}{\eta^{p-1}} \right\} \cdot \int_{\Omega}| \varphi|^pd\mu. 
    \end{aligned}
\end{equation}
Where $d\mu$ denotes the Riemannian volume measure on $\Omega$. Consequently, from \eqref{huang&allegeto1}, for every $\varphi \in \mathcal{C}_0^{\infty}(\Omega)$, we obtain
\begin{equation}\label{huang&allegeto}
   \dfrac{\displaystyle\int_{\Omega} | \nabla \varphi |^p d\mu}{\displaystyle\int_{\Omega}| \varphi|^pd\mu}\geq \inf_{\Omega} \left\{ -\frac{\triangle_p \eta}{\eta^{p-1}}  \right\}.
\end{equation}
Taking the infimum over $\varphi \in \mathcal{C}_0^\infty(\Omega)\setminus\{0\}$
in \eqref{huang&allegeto} and using the density of $\mathcal{C}_0^\infty(\Omega)$ in $W_0^{1,p}(\Omega)$ (see \cite[Corollary A.4]{le}), we conclude that \eqref{eq:pBarta} holds.
Assume that $\Omega$ is bounded and that $\eta=\beta u$ for some $\beta>0$,
where $u\in W_0^{1,p}(\Omega)$ is a nonnegative minimizer of the Rayleigh quotient, i.e.,
\[
\lambda_{1,p}(\Omega) =\frac{\displaystyle \int_\Omega \ |\nabla u|^pd\mu}{\displaystyle \int_\Omega |u|^p d\mu}.
\]
Since $u$ satisfies $\triangle_p u = -\lambda_{1,p}(\Omega)\,u^{p-1}$ in $\Omega$, the $(p-1)$-homogeneity of the $p$-Laplacian, $\triangle_p(\beta u)=\beta^{p-1}\triangle_p u$, yields the following identity
\[
-\triangle_p \eta
= -\triangle_p(\beta u)
= -\beta^{p-1}\triangle_p u
= \beta^{p-1}\lambda_{1,p}(\Omega)\,u^{p-1}
=\lambda_{1,p}(\Omega)\eta^{\,p-1}
\quad\text{in }\Omega
\]
in the weak sense. In particular, wherever $u>0$ (equivalently, $\eta>0$),
\[
\lambda_{1,p}(\Omega) = -\frac{\triangle_p\eta}{\eta^{p-1}}.
\]
Hence, $\lambda_{1,p}(\Omega) = \inf_\Omega\left\{-\triangle_p\eta/\eta^{p-1}\right\}$, and equality holds in \eqref{eq:pBarta}. It remains to prove the converse implication. Assume that equality holds in \eqref{eq:pBarta}. Since $\Omega$ is bounded, the infimum in the Rayleigh quotient is achieved. By standard variational arguments, $u$ is a weak solution of $\triangle_p u = -\lambda_{1,p}(\Omega)\,u^{p-1}$ in $\Omega$. Let $\{\varphi_n\}_{n\in\mathbb{N}}\subset \mathcal{C}_0^\infty(\Omega)$ be a sequence with $\varphi_n\ge0$ and
$\varphi_n\to u$ strongly in $W_0^{1,p}(\Omega)$. For each $n$, Picone's identity yields
\[
0\le \int_\Omega \Bigl(|\nabla \varphi_n|^p
-|\nabla \eta|^{p-2}\bigl\langle \nabla\eta,\nabla(\varphi_n^p/\eta^{p-1})\bigr\rangle\Bigr)d\mu.
\]
Proceeding as in \eqref{huang&allegeto1}, and using the weak formulation of $\triangle_p\eta$ with the test
function $\varphi_n^p/\eta^{p-1}\in W_0^{1,p}(\Omega)$, it follows that
\[
0 \leq \int_\Omega |\nabla \varphi_n|^p d\mu
-\int_\Omega \left(-\frac{\triangle_p\eta}{\eta^{p-1}}\right)|\varphi_n|^p d\mu.
\]
Passing to the limit as $n\to\infty$ and using $\varphi_n\to u$ in $W_0^{1,p}(\Omega)$ gives the following 
\begin{equation}\label{inebar1}
0 \leq \int_\Omega |\nabla u|^p d\mu
-\int_\Omega \left(-\frac{\triangle_p\eta}{\eta^{p-1}}\right) |u|^p d\mu.    
\end{equation}
Let $\lambda \coloneq \inf_{\Omega}\left\{-\triangle_p\eta/\eta^{p-1}\right\}$. Since equality holds in \eqref{eq:pBarta}, we have $\lambda_{1,p}(\Omega)=\lambda$.
Moreover,
\begin{equation}\label{inebar2}
\int_\Omega |\nabla u|^p d\mu=\lambda_{1,p}(\Omega)\int_\Omega |u|^p d\mu
=\lambda \int_\Omega |u|^p d\mu \leq \int_\Omega\left(-\dfrac{\triangle_p\eta}{\eta^{p-1}}\right)|u|^p d\mu.    
\end{equation}
Combining this with the previous inequality \eqref{inebar1} and \eqref{inebar2} we conclude that equality must hold in both inequalities. Hence, the nonnegative integrand in Picone's identity must vanish a.e. in $\Omega$, and therefore
\begin{equation}\label{equalitypicone}
| \nabla u |^p + (p-1)\frac{u^p}{\eta^p}| \nabla \eta |^p
- p \frac{u^{p-1}}{\eta^{p-1}}
\left\langle \nabla u, |\nabla \eta |^{p-2}\nabla \eta \right\rangle = 0.
\end{equation}
Henceforth, all equalities and inequalities are understood to hold almost everywhere in $\Omega$. Thus, we can rewrite the equation \eqref{equalitypicone} as follows
\begin{equation*}
\underbrace{| \nabla u |^p - p \frac{u^{p-1}}{\eta^{p-1}}
| \nabla u | | \nabla \eta |^{
p-2} | \nabla \eta |
     + (p-1)\frac{u^p}{\eta^p}| \nabla \eta |^p}_{\mathcal{A}} + \underbrace{p \frac{u^{p-1}}{\eta^{p-1}}| \nabla \eta |^{p-2}
\left( | \nabla u |  | \nabla \eta | - \left\langle \nabla u , \nabla \eta \right\rangle \right)}_{\mathcal{B}} =0.
\end{equation*}

Using Young's inequality ($\displaystyle ab \leq a^p/p + b^q/q$ where $q$ is the Hölder conjugate of $p$) with $a \coloneq | \nabla u |$ and $b \coloneq \left( u | \nabla \eta |/\eta \right)^{p-1}$, we guarantee that $\mathcal{A}(x) \geq 0$. Note also that $\mathcal{B}(x)\geq 0$, therefore, $\mathcal{A}(x) = \mathcal{B}(x)=0$ a.e. in $\Omega$. From $\mathcal{B}=0$ we obtain the following
\[
|\nabla u|\cdot|\nabla\eta|=\langle \nabla u,\nabla\eta\rangle,
\]
hence $\nabla u$ and $\nabla\eta$ are parallel a.e.\ in $\Omega$. In particular, there exists a measurable function $\alpha:\Omega\to[0,\infty)$ such that
\begin{equation}\label{parallel}
\nabla \eta = \alpha \cdot \nabla u.
\end{equation} 
Set $\Sigma \coloneq \{x\in\Omega : |\nabla u(x)|\,|\nabla\eta(x)|>0\}$. 
On $\Sigma$ we may divide by $|\nabla\eta|$, and since $\mathcal{A}=0$ a.e.\ in $\Omega$ we can divide $\mathcal{A}$ by $(u|\nabla\eta|/\eta)^p$ to obtain, for a.e.\ $x\in\Sigma$,
\begin{equation}\label{quociente}
\left(\frac{\eta}{u}\frac{|\nabla u|}{|\nabla\eta|}\right)^p
- p\frac{\eta}{u}\frac{|\nabla u|}{|\nabla\eta|}
+ (p-1)=0 .
\end{equation}
Let  $y \coloneq \left( \eta | \nabla u | \right)/\left( u | \nabla \eta | \right)$. Then \eqref{quociente} becomes
\begin{equation}\label{eqdio}
y^p - py + (p-1)=0. 
\end{equation}
Since $t=1$ is a positive root of $t^p-pt+(p-1)=0$ and the function
$t\mapsto t^p$ is strictly convex on $(0,\infty)$, this equation admits a unique positive root. Hence $y=1$ a.e.\ in $\Sigma$, and therefore
\begin{equation}\label{a}
|\nabla u|=\frac{u}{\eta}\,|\nabla\eta|.
\end{equation}
On $\Omega\setminus\Sigma$ we have $|\nabla u|\,|\nabla\eta|=0$. If there exists $x$ such that $|\nabla\eta(x)|=0$, then \eqref{equalitypicone} reduces to
$|\nabla u(x)|^p=0$; similarly, if $|\nabla u(x)|=0$, then it reduces to
$(p-1)(u^p/\eta^p)|\nabla\eta(x)|^p=0$. Hence $|\nabla u|=|\nabla\eta|=0$ a.e. on $\Omega\setminus\Sigma$. Furthermore, note that if  $x \in \Sigma$, we have $\alpha(x)>0$ and combining equations \eqref{parallel} and \eqref{a}, we obtain $\alpha= \eta/u$. Since $u,\eta>0$ in $\Omega$ and $u,\eta\in \mathcal{C}^{1,\alpha}_{\mathrm{loc}}(\Omega)$,
we have $\log u,\log\eta \in \mathcal{C}^{1,\alpha}_{\mathrm{loc}}(\Omega)$ and
\[
\nabla(\log\eta-\log u)
= \frac{\nabla\eta}{\eta}-\frac{\nabla u}{u}=0.
\]
Therefore, $\log\eta-\log u$ is constant on each connected component of $\Omega$, and since $\eta,u>0$ are continuous,
we obtain $\eta=\beta\,u$ in $\Omega$ for some constant $\beta>0$.
\end{proof}

Next, we briefly review the theory that will be used in Section \ref{Sec3}. For a comprehensive account of the theory discussed in the following, we refer the interested reader to \cite{BessaMontenegro, LimaMontenegroSantos2010, lindqvist, SchoenYau1994}.

\begin{definition}[Weak divergence]
\label{def:weakdiv}
Let $(M,g)$ be a Riemannian manifold and let $X \in L_{\mathrm{loc}}^{1}(M)$ be a vector field, in the sense that $\|X\| \in L_{\mathrm{loc}}^{1}(M)$. 
We say that a function $h \in L_{\mathrm{loc}}^{1}(M)$ is the weak divergence of $X$, and we write $h = \operatorname{Div} X$, if for every test function $\eta \in \mathcal{C}_0^{\infty}(M)$ the following identity holds:
\[ \int_{M} \eta \operatorname{Div}(X) dM = -\int_{M} \langle \nabla \eta, X \rangle dM. \]
\end{definition}
We denote by $\mathcal{W}^{1,1}(M)$ the space of vector fields on $M$
that admits a weak divergence. 
\begin{remark}\label{caracteeizationDiv}
If $X \in \mathcal{W}^{1,1}(M)$ and $f \in \mathcal{C}^{\infty}(M)$ then $fX \in \mathcal{W}^{1,1}(M)$ with $\operatorname{Div}(fX)= \langle \nabla f , X \rangle +f\operatorname{Div}(X)$. Futhermore, if $f \in \mathcal{C}_0^{\infty}(M)$, then
\begin{equation}
    \int_M \operatorname{Div}(fX) dM =\int_M \Big[ \langle \nabla f , X \rangle +f\operatorname{Div}(X)\Big] dM = 0.
\end{equation}
Conversely, if $fX \in \mathcal{W}^{1,1}(M)$ for all $f \in \mathcal{C}_0^{\infty}(M)$,
then $X \in \mathcal{W}^{1,1}(M)$. In this case, the weak divergence of $X$ is
given by
\begin{equation*}
\operatorname{Div}(X) = \sum_{i=1}^{\infty} \operatorname{Div}\!\left(\xi_i X\right),  
\end{equation*} where $\{\xi_i\}_{i\in\mathbb{N}}$ is a partition of unity subordinated to a locally finite open covering of $M$. 
\end{remark}
A sufficient condition ensuring that a vector field $X$ belongs to the Sobolev space $\mathcal{W}^{1,1}(M)$ is established in \cite[Lemma 3.5]{BessaMontenegro}, leading to the following criterion

\begin{definition}
Let $(M, g)$ be a Riemannian manifold. A vector field $X$ on a domain $\Omega \subset M$ is said to be of class $L^{\infty}(\Omega)$ if
\[
\|X\|_\infty \coloneqq \operatorname*{ess\,sup}_{x\in \Omega} |X(x)| < \infty.
\]
Where $|X(x)| = g(X(x),X(x))^{1/2}$.
\end{definition} 
\begin{prop}[Bessa--Montenegro \cite{BessaMontenegro}]\label{LemmaSobolev}
Let $\Omega \subset M$ be a bounded domain in a smooth Riemannian manifold $M$,
and let $\mathcal{F} \subset M$ be a closed subset such that
$\mathcal{H}^{n-1}(\mathcal{F} \cap \Omega) = 0$.
Let $X$ be a vector field satisfying
$X \in \mathcal{C}^1(\Omega \setminus \mathcal{F}) \cap L^{\infty}(\Omega)$
and $\operatorname{div}(X) \in L^{1}(\Omega)$.
Then $X \in \mathcal{W}^{1,1}(\Omega)$, and its weak divergence satisfies
$\operatorname{Div}(X) = \operatorname{div}(X)$ in $\Omega \setminus \mathcal{F}$.
\end{prop}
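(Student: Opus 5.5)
The plan is to reduce the statement to its smooth counterpart by approximation. Using the partition-of-unity characterization in Remark \ref{caracteeizationDiv}, it suffices to prove that $fX \in \mathcal{W}^{1,1}(\Omega)$ for every $f \in \mathcal{C}_0^\infty(\Omega)$. Concretely, I will show that for every such $f$,
\[
\int_\Omega \langle \nabla f, X\rangle\, d\mu = -\int_\Omega f\, \operatorname{div}(X)\, d\mu,
\]
where $\operatorname{div}(X)$ is the pointwise divergence on $\Omega\setminus\mathcal{F}$. This function lies in $L^1(\Omega)$ by hypothesis, and $\mathcal{F}\cap\Omega$ has measure zero (its $\mathcal{H}^{n-1}$-measure vanishes, so its $\mathcal{H}^n$-measure does too). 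The identity then exhibits $\operatorname{div}(X)$ as the weak divergence of $X$, which gives both conclusions at once.

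The key step is to excise $\mathcal{F}$ using small balls. Fix $f$ and the compact set $K=\operatorname{supp} f \subset \Omega$. Since $\mathcal{H}^{n-1}(\mathcal{F}\cap K)=0$ and $\mathcal{F}\cap K$ is compact, for each $\varepsilon>0$ there is a finite cover of $\mathcal{F}\cap K$ by geodesic balls $B(x_i,r_i)$, with $r_i$ smaller than the local injectivity/convexity radius, such that $\sum_i r_i^{\,n-1}<\varepsilon$. Let $U_\varepsilon=\bigcup_i B(x_i,r_i)$. I will then apply the classical divergence theorem to the $\mathcal{C}^1$ field $fX$ on $\Omega\setminus \overline{U_\varepsilon}$. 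This region has Lipschitz (piecewise smooth) boundary inside $\operatorname{supp} f$, and $f$ vanishes near $\partial\Omega$. The result is
\[
\int_{\Omega\setminus U_\varepsilon}\bigl(\langle\nabla f,X\rangle + f\operatorname{div}X\bigr)d\mu = -\int_{\partial U_\varepsilon} f\langle X,\nu\rangle\, dA .
\]
The boundary term is bounded by $\|f\|_\infty\|X\|_\infty \sum_i \operatorname{Area}(\partial B(x_i,r_i)) \le C\|f\|_\infty\|X\|_\infty\,\varepsilon$. Here $C$ comes from comparing small geodesic spheres with Euclidean ones on the compact set $K$, and this is exactly where boundedness of $X$ is used.

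Finally, let $\varepsilon\to0$. The volume $|U_\varepsilon|\le C\sum r_i^n\to 0$. Moreover $\langle\nabla f,X\rangle\in L^\infty$ and $f\operatorname{div}X\in L^1$. Dominated convergence (equivalently, absolute continuity of the integral) therefore gives convergence of the left-hand side to the integral over all of $\Omega$, while the right-hand side tends to $0$. This yields the desired identity.

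I expect the main obstacle to be technical: the boundary of a finite union of balls is only piecewise smooth, so the divergence theorem must be applied on a Lipschitz domain. To handle this cleanly, I would first try a mollified cutoff. Choose $\psi_\varepsilon\in\mathcal{C}^\infty$ with $\psi_\varepsilon=1$ near $\mathcal{F}\cap K$, supported in the doubled balls, and satisfying $\int|\nabla\psi_\varepsilon|\le C\sum r_i^{n-1}<C\varepsilon$. A sum of bump functions with gradient $\lesssim 1/r_i$ on each ball works. Then integrate $\operatorname{div}\bigl((1-\psi_\varepsilon)fX\bigr)$ against $1$, which is legitimate since that field is $\mathcal{C}^1$ with compact support in $\Omega\setminus\mathcal{F}$. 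The extra term $\int f\langle\nabla\psi_\varepsilon,X\rangle$ is then bounded by $C\|f\|_\infty\|X\|_\infty\varepsilon$, and no boundary regularity issue arises.
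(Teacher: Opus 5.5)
Your argument is correct. The paper gives no proof of this proposition; it only quotes it from Bessa--Montenegro, so the text offers no route to compare against. What you wrote is the natural self-contained proof. You excise $\mathcal{F}\cap\operatorname{supp} f$ by finitely many small balls with $\sum_i r_i^{n-1}<\varepsilon$. The bound $\|X\|_\infty$ then kills the boundary (or cutoff-gradient) term. For the bulk term, $\operatorname{div}X\in L^1$ together with $\operatorname{vol}(U_\varepsilon)\to0$ does the job. This is exactly why a closed set of zero $\mathcal{H}^{n-1}$-measure is removable for bounded fields.

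There are a few points of precision.

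First, the identity $\int_\Omega\langle\nabla f,X\rangle\,d\mu=-\int_\Omega f\operatorname{div}X\,d\mu$ for all $f\in\mathcal{C}_0^\infty(\Omega)$ is, by definition, the statement that $\operatorname{div}X$ is the weak divergence of $X$. So the detour through Remark~\ref{caracteeizationDiv} is unnecessary. That remark is about $fX$ admitting \emph{some} weak divergence, which is not what you actually prove.

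Second, in the cutoff version a plain sum $\sum_i\chi_i$ of bumps is only $\ge1$ (not $\equiv1$) near $\mathcal{F}\cap K$ where the balls overlap, and it has no uniform bound. Then $(1-\psi_\varepsilon)fX$ need not vanish near $\mathcal{F}\cap K$, and the limit of $\int(1-\psi_\varepsilon)\bigl(\langle\nabla f,X\rangle+f\operatorname{div}X\bigr)d\mu$ is not controlled. Use instead $\psi_\varepsilon=1-\prod_i(1-\chi_i)$ with $0\le\chi_i\le1$. Then $0\le\psi_\varepsilon\le1$, $\psi_\varepsilon=1$ on $U_\varepsilon$, $|\nabla\psi_\varepsilon|\le\sum_i|\nabla\chi_i|$, and $\operatorname{vol}(\operatorname{supp}\psi_\varepsilon)\le C\sum_i r_i^{n}\to0$. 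Absolute continuity of the integral then gives the limit.

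Third, take all radii below a fixed fraction of $\operatorname{dist}(K,\partial\Omega)$ and of the injectivity radius on a compact neighbourhood of $K$. This keeps the doubled balls in a fixed compact subset of $\Omega$, where the uniform bounds $\operatorname{Area}(\partial B(x,r))\le Cr^{n-1}$ and $\operatorname{vol}(B(x,r))\le Cr^{n}$ hold.

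With these adjustments the proof is complete. As you observed, the cutoff version also avoids a problem with the first version: a finite union of balls need not have Lipschitz complement, because of tangencies. The first version would otherwise need Gauss--Green for sets of finite perimeter or a generic choice of radii.
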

In addition to extension of Barta's Theorem, the work \cite{BessaMontenegro} provides a version of this theorem for smooth fields, giving an estimate for the fundamental tone through a min-max technique in fields $X$ defined in $\mathcal{W}^{1,1}(M)$. Furthermore, the result \cite[Theorem 1.7]{BessaMontenegro} remains valid for the $p$-fundamental tone and was proven in \cite[Theorem 1.3]{LimaMontenegroSantos2010}. For the reader’s convenience, we record the result below
\begin{theorem}[\cite{LimaMontenegroSantos2010}]\label{Thm:BMS}
Let $(M, g)$ be a Riemannian manifold. Then the following estimate for the $p$-fundamental tone of $M$ holds
\[
\lambda_{1,p}(M) \geq \sup \left\{\inf_{\Omega} \bigl( (1-p)\|X\|^q+\operatorname{Div}(X) \bigr), X \in \mathcal{W}^{1,1}(M)\right\}.
\]
Where $1 / p+1 / q=1$.    
\end{theorem}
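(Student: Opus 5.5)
The inequality follows from a pointwise Young-type estimate on the flux term, integrated against $|\varphi|^p$; the supremum is then taken over $X$. Fix $X\in\mathcal{W}^{1,1}(M)$ and set $c_X \coloneqq \inf_M\bigl((1-p)\|X\|^q+\operatorname{Div}(X)\bigr)$. We may assume $c_X>-\infty$, since otherwise there is nothing to prove. Take $\varphi\in\mathcal{C}_0^\infty(M)$. By Remark \ref{caracteeizationDiv}, with $f=|\varphi|^p$ (approximated by smooth compactly supported functions, or using that $|\varphi|^p\in\mathcal{C}^1_0$ for $p>1$), we get
\[
\int_M |\varphi|^p\operatorname{Div}(X)\,dM=-\int_M\bigl\langle \nabla |\varphi|^p, X\bigr\rangle dM \le p\int_M |\varphi|^{p-1}|\nabla\varphi|\,\|X\|\,dM .
\]

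Next we apply Young's inequality with exponents $p$ and $q$ to the right-hand side, with $a=|\nabla\varphi|$ and $b=|\varphi|^{p-1}\|X\|$. Since $(p-1)q=p$, this gives $p\,ab\le |\nabla\varphi|^p+(p-1)|\varphi|^p\|X\|^q$. Substituting, we obtain
\[
\int_M |\varphi|^p\bigl(\operatorname{Div}(X)+(1-p)\|X\|^q\bigr)dM\le \int_M|\nabla\varphi|^p\,dM .
\]
The left-hand side is at least $c_X\int_M|\varphi|^p\,dM$, so $\int_M|\nabla\varphi|^p\ge c_X\int_M|\varphi|^p$ for every $\varphi\in\mathcal{C}_0^\infty(M)$.

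By density of $\mathcal{C}_0^\infty(M)$ in $W_0^{1,p}(M)$ and the Rayleigh characterization \eqref{eq:Rayleigh quotient}, this gives $\lambda_{1,p}(M)\ge c_X$. Taking the supremum over $X$ then yields the theorem.

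The main obstacle is justifying the integration by parts with $f=|\varphi|^p$, which is only $\mathcal{C}^1$ and not smooth. This needs a weak divergence identity for Lipschitz or $\mathcal{C}^1$ compactly supported test functions. The plan is to mollify $|\varphi|^p$ (or use $(\varphi^2+\epsilon)^{p/2}-\epsilon^{p/2}$ times a cutoff) and pass to the limit using $X\in L^1_{\mathrm{loc}}$ and $\operatorname{Div}X\in L^1_{\mathrm{loc}}$. A secondary point is integrability of $\|X\|^q$ on the support; if it is not integrable, the inequality is trivially satisfied there, since the term enters with a negative sign.
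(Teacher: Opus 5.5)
Your argument is correct. The paper gives no proof of this statement; it only quotes it from \cite{LimaMontenegroSantos2010}. Your route is the standard proof of that result, the $p$-analogue of the Bessa--Montenegro argument for $p=2$: test the weak divergence against $|\varphi|^p$, use Young's inequality with $(p-1)q=p$, and conclude by density.

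One justification should be replaced. Integrability of $\|X\|^q$ on $\operatorname{supp}\varphi$ is automatic once $c_X>-\infty$, because $(p-1)\|X\|^q\le \operatorname{Div}(X)-c_X$ a.e. and the right-hand side lies in $L^1_{\mathrm{loc}}(M)$. That is what makes cancelling $(p-1)\int_M|\varphi|^p\|X\|^q$ legitimate. A non-integrable term would not make the inequality ``trivially satisfied''; it would break your lower bound $c_X\int_M|\varphi|^p$.
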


The estimate above is sharp because, if $u$ is the first Dirichlet eigenfunction of the \(p\)-Laplacian. That is,
\[ \triangle_p u = \lambda_{1,p}^{\mathcal{D}}(\Omega) |u|^{p-2}u.  \]
Consider $X$ the smooth vector field given by 
\[ X = -\frac{|\nabla u|^{p-2} \nabla u}{|u|^{p-2} u}.  \]
Since $u \in W_0^{1,p}(\Omega)$ is a solution of \eqref{Dirichleteigenvalue}, it follows by \cite[Lemma 5]{le} that $u \in \mathcal{C}^{1,\alpha}(\Omega)$, therefore $X \in \mathcal{W}^{1,1}(\Omega)$. Thus, using \cite[Remark 2.1]{LimaMontenegroSantos2010} we obtain that $\lambda_{1,p}^{\mathcal{D}}(\Omega) = (1-p)\|X\|^q+\operatorname{div}(X)$. Furthermore, we can repeat the same argument from the proof of \cite[Theorem 1.3]{LimaMontenegroSantos2010} replacing $M$ with $M \setminus \mathcal{F}$ where $\mathcal{F}$ is a set of zero Riemann measure, then the result given in Theorem \ref{Thm:BMS} remains valid outside of that set.
\begin{remark}\label{rmk:BMS}
Let $(M, g)$ be a Riemannian manifold. Then the following estimate for the $p$-fundamental tone of $M$ holds
$$
    \lambda_{1,p}(M) \geq \sup \left\{\inf_{M \setminus \mathcal{F}} \bigl( (1-p)\|X\|^q+\operatorname{Div}(X) \bigr), X \in \mathcal{W}^{1,1}(M)\right\}.
$$
Where $1 / p+1 / q=1$ and $\mathcal{F}$ has zero Riemannian volume. 
\end{remark}

Barta’s formulation via vector fields, as presented originally in \cite{cheung} in submanifolds of the  Hyperbolic space and generalized in \cite{BessaMontenegro}
and \cite{LimaMontenegroSantos2010}, allows one to estimate the $p$-fundamental tone
without requiring positive test functions or any boundary regularity.
Moreover, it naturally incorporates geometric information through the divergence of $X$, making it possible to derive explicit spectral estimates in terms of curvature,
immersions, or volume comparison. The inequality for the $p$-fundamental tone established in the work of Lima--Santos--Montenegro \cite[Theorem 1.1]{LimaMontenegroSantos2010} is the key ingredient in the proof of Theorem \ref{thmlowerptone}. This estimate is obtained through a max--min technique applied to the class of all admissible smooth vector fields. More precisely, it yields the following estimate

\begin{theorem}[Lima--Santos--Montenegro]\label{LimaSantosMontenegro}
Let $\Omega \subset M$ be a domain $(\partial \Omega \neq \emptyset)$ in a smooth Riemannian manifold $(M,g)$. Denote by $\mathfrak{X}(\Omega)$ the set of all smooth vector fields $X \in L^{\infty}(\Omega)$ .
Then
\[
 \lambda_{1,p}(\Omega) \geq \dfrac{1}{p^p} \left( \sup_{X \in \mathfrak{X}(\Omega) } \left\{ \frac{   \displaystyle \inf_{\Omega}{\operatorname{div}(X)}}{  \Vert X \Vert_{\infty}} \right\}  \right)^p. 
\]
\end{theorem}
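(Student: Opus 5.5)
The plan is to test the Rayleigh quotient \eqref{eq:Rayleigh quotient} against an arbitrary $\varphi \in \mathcal{C}_0^\infty(\Omega)$, integrate $\operatorname{div}(X)$ against $|\varphi|^p$ by parts, and then optimize over $\varphi$ and $X$.

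\textbf{Reduction to fields with positive divergence.} First, only fields $X \in \mathfrak{X}(\Omega)$ with
\[
c_X \coloneq \inf_{\Omega} \operatorname{div}(X) > 0
\]
need to be considered. If $c_X \le 0$ for every admissible $X$, the right-hand side is read as $0$, i.e.\ with the convention $(\max\{0,\cdot\})^p$. The inequality is then trivial because $\lambda_{1,p}(\Omega)\ge 0$. So fix $X$ with $0<c_X$ and $\|X\|_\infty<\infty$.

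\textbf{Key step: integration by parts and Hölder.} Take $\varphi \in \mathcal{C}_0^\infty(\Omega)$, $\varphi\not\equiv 0$. Since $p>1$, the function $|\varphi|^p$ is $\mathcal{C}^1$ with compact support, and $\nabla |\varphi|^p = p|\varphi|^{p-2}\varphi\nabla\varphi$. The divergence theorem (no boundary term, since the support is compact) gives
\[
c_X\int_\Omega |\varphi|^p\,d\mu \le \int_\Omega \operatorname{div}(X)\,|\varphi|^p\,d\mu = -p\int_\Omega |\varphi|^{p-2}\varphi\,\langle \nabla\varphi, X\rangle\,d\mu \le p\|X\|_\infty\int_\Omega |\varphi|^{p-1}|\nabla\varphi|\,d\mu .
\]
Hölder's inequality with exponents $q=p/(p-1)$ and $p$ bounds the last integral by
\[
\Big(\int_\Omega|\varphi|^p\,d\mu\Big)^{(p-1)/p}\Big(\int_\Omega|\nabla\varphi|^p\,d\mu\Big)^{1/p}.
\]
Dividing by $\big(\int_\Omega|\varphi|^p\,d\mu\big)^{(p-1)/p}$ and raising to the $p$-th power (both sides are nonnegative) yields
\[
\frac{\int_\Omega|\nabla\varphi|^p\,d\mu}{\int_\Omega|\varphi|^p\,d\mu}\ \ge\ \frac{1}{p^p}\left(\frac{c_X}{\|X\|_\infty}\right)^p .
\]

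\textbf{Conclusion.} The inequality above holds for every $\varphi \in \mathcal{C}_0^\infty(\Omega)$. Taking the infimum over such $\varphi$ and using the density of $\mathcal{C}_0^\infty(\Omega)$ in $W_0^{1,p}(\Omega)$, as in the proof of Theorem \ref{thm:pbarta}, gives $\lambda_{1,p}(\Omega)\ge p^{-p}(c_X/\|X\|_\infty)^p$. Finally, taking the supremum over $X$ gives the claim, since $t\mapsto t^p$ is increasing on $[0,\infty)$.

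The computation itself is routine. The only point needing care is the sign convention when the supremum is nonpositive, which the first step handles. Note also that no regularity of $\partial\Omega$ is used, because all test functions have compact support in $\Omega$.
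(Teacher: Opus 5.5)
The paper gives no proof of this statement. It quotes it from Lima--Montenegro--Santos and uses it only as a black box in the proof of Theorem \ref{thmlowerptone}.

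Your argument is correct and complete. Integrating $\operatorname{div}(|\varphi|^pX)$ over the compact support of $\varphi$ and bounding the cross term by $p\|X\|_\infty\int_\Omega|\varphi|^{p-1}|\nabla\varphi|\,d\mu$ is the standard Cheung--Leung argument adapted to $p\neq 2$. Applying H\"older and then passing to $W^{1,p}_0(\Omega)$ by density completes it. As you note, it uses neither $\partial\Omega$ nor any boundedness of $\Omega$. Your sign convention is harmless, and in fact not needed: nonzero bounded divergence-free fields always exist on $\Omega$, so the supremum is always $\ge 0$.

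The statement also fits the paper's own framework: it is a corollary of Theorem \ref{Thm:BMS}, obtained by rescaling the field. For $t>0$, the smooth bounded field $tX\in\mathcal{W}^{1,1}(\Omega)$ satisfies, pointwise,
\[
(1-p)|tX|^q+\operatorname{div}(tX)\ \ge\ t\,c_X-(p-1)\,t^q\|X\|_\infty^q .
\]
Since $(p-1)q=p$, the right-hand side is maximized at $t_\ast=\bigl(c_X/(p\|X\|_\infty^q)\bigr)^{p-1}$, where it equals $\bigl(c_X/(p\|X\|_\infty)\bigr)^p$. So the two routes are the same estimate in different forms: Theorem \ref{Thm:BMS} uses Young's inequality pointwise, and optimizing the scale of $X$ reproduces your H\"older step, including the constant $p^{-p}$.

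Your direct route is more elementary. It needs only smooth bounded fields and the classical divergence theorem. The route through Theorem \ref{Thm:BMS} shows that the bound there is at least as strong as the present one. It also extends at once to fields in $\mathcal{W}^{1,1}$ with weak divergence, which the paper needs where distance functions fail to be smooth on cut loci.
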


\section{Proof of the Main Results}\label{Sec3} For the proof of Theorem \ref{thm:Cheng}, we employ the same techniques used in \cite{BessaMontenegro}, in conjunction with Theorem \ref{Thm:BMS} and Remark \ref{rmk:BMS}. The rigidity statement follows analogously to the argument in \cite[Theorem 1.8]{BessaMontenegro}. To guarantee the isometry required in Theorem \ref{thm:Cheng}, we use a result of Bishop (see \cite[Theorem 3.1]{BessaMontenegro} or \cite{BishopCrittenden1964}) together with the Laplacian Comparison Theorem \cite{GreeneWu1979}. For the proof of Theorem \ref{thm:pminsub_ball}, we make use of the Hessian Comparison Theorem, as presented in \cite{SchoenYau1994}, to obtain the desired comparison estimates, and we again apply Bishop’s theorem to ensure rigidity. Moreover, Theorem \ref{thmlowerptone} is established by means of the inequality given in \cite[Theorem 1.1]{LimaMontenegroSantos2010}, within the same admissible class considered in the proof of \cite[Theorem 1.6]{Bessa-Silvana}. Finally, Theorem \ref{Thm:pKazdame} establishes a correspondence between the first Dirichlet eigenvalue problem and a blow-up analysis of solutions. This correspondence is ensured by the Barta-type characterization provided by Theorem \ref{thm:pbarta}. We now state the main results that will be used throughout this section. Consider the function
\begin{equation}\label{espaceformfunc}
S_c(t) \coloneq
\begin{cases}
\dfrac{1}{\sqrt{c}}\sin(\sqrt{c}t), & c>0,\\
t, & c=0,\\
\dfrac{1}{\sqrt{-c}}\sinh(\sqrt{-c}t), & c<0.
\end{cases}    
\end{equation}

The next three lemmas are well known, and their proofs can be found, respectively, in  \cite{BishopCrittenden1964}, \cite{GreeneWu1979}, and \cite{SchoenYau1994}. Bishop’s theorem will be used to ensure the rigidity required in Theorems \eqref{thm:Cheng} and \eqref{thm:pminsub_ball}. The Laplacian Comparison Theorem and the Hessian Comparison Theorem will play a key role in establishing the comparison estimates for the $p$-fundamental tone in Theorems \eqref{thm:Cheng} and \eqref{thm:pminsub_ball}, respectively.

\begin{lem}[Bishop] \label{thm:bishop}
Let $M$ be an $m$-dimensional Riemannian manifold and let $\gamma_\xi$ be a unit-speed
geodesic issuing from a point $p\in M$ in the direction $\xi\in \mathbb{S}^{m-1}$.
Assume that the radial sectional curvatures along $\gamma_\xi$ satisfy
\[
\bigl\langle \mathrm{R}\bigl(\gamma_\xi'(t),v\bigr)\gamma_\xi'(t),v\bigr\rangle
\le c|v|^2,
\qquad \forall\, t\in(0,r)
\]
for every $v\perp \gamma_\xi'(t)$, and that $S_c(t)$ does not vanish on $(0,r)$.
Then
\begin{align}
\left(\frac{\sqrt{g(t,\xi)}}{S_c^{m-1}(t)}\right)' &\geq 0 
\qquad \text{on } (0,r), \label{eq:bishop1}\\
\sqrt{g(t,\xi)}-S_c^{m-1}(t) &\geq 0
\qquad \text{on } (0,r]. \label{eq:bishop2}
\end{align}
Moreover, equality in either \eqref{eq:bishop1} or \eqref{eq:bishop2} at some point
$t_0\in(0,r)$ holds if and only if
\[
\mathcal{R}=c \cdot \mathrm{Id}
\quad \text{and} \quad
\mathcal{A}=S_c \cdot \mathrm{Id}
\quad \text{on } [0,t_0].
\]
\end{lem}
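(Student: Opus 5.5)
The plan is to pass to geodesic polar coordinates around $p$ and reduce both inequalities to a Riccati comparison for the Jacobi fields along $\gamma_\xi$. Write $\sqrt{g(t,\xi)}=\det\mathcal{A}(t)$, where $\mathcal{A}(t)$ is the endomorphism of $\gamma_\xi'(t)^\perp$ solving the Jacobi equation
\[
\mathcal{A}''+\mathcal{R}\mathcal{A}=0,\qquad \mathcal{A}(0)=0,\qquad \mathcal{A}'(0)=\mathrm{Id},
\]
with $\mathcal{R}=\mathrm{R}(\gamma_\xi',\cdot)\gamma_\xi'$. Because $S_c$ does not vanish on $(0,r)$ and $\mathcal{R}\le c$, a Rauch-type argument (no conjugate points before the first zero of $S_c$) shows that $\mathcal{A}$ is invertible on $(0,r)$.

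Set $U=\mathcal{A}'\mathcal{A}^{-1}$. Then $U$ is symmetric, since the Wronskian $\mathcal{A}'^{*}\mathcal{A}-\mathcal{A}^{*}\mathcal{A}'$ is constant and vanishes at $0$. It satisfies the Riccati equation
\[
U'+U^2+\mathcal{R}=0.
\]
In these terms,
\[
\bigl(\log\sqrt{g}\bigr)'=\operatorname{tr}U,\qquad \bigl(\log S_c^{m-1}\bigr)'=(m-1)\,u_c,
\]
where $u_c=S_c'/S_c$ solves $u_c'+u_c^2+c=0$. So \eqref{eq:bishop1} is equivalent to $\operatorname{tr}U\ge (m-1)u_c$ on $(0,r)$.

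To prove this trace inequality, I would compare $U$ with $u_c\,\mathrm{Id}$ as matrices. The difference $W=U-u_c\mathrm{Id}$ satisfies
\[
W'+W(U+u_c)=-(\mathcal{R}-c)\ \ge 0 .
\]
Both $U$ and $u_c\mathrm{Id}$ behave like $t^{-1}\mathrm{Id}+O(t)$ near $0$, so $W\to 0$ as $t\to0$. The standard Riccati comparison (Eschenburg--Heintze) then gives $W\ge0$. Concretely, for a unit vector $v$ one studies $\langle W v,v\rangle$ along parallel frames, or works with the quantity $S_c^2\,\mathcal{A}^{-1*}(\cdots)\mathcal{A}^{-1}$ to integrate the equation. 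Alternatively, one can use the index-form argument: for a fixed parallel unit $v$, compare the index of the Jacobi field $\mathcal{A}v$ on $[0,t]$ with that of $\frac{S_c}{S_c(t)}P(\mathcal{A}(t)v)$ for parallel $P$, which gives $\langle U v,v\rangle\ge u_c|v|^2$ directly. Taking the trace yields \eqref{eq:bishop1}.

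Then \eqref{eq:bishop2} follows by integration. The ratio $\sqrt{g}/S_c^{m-1}$ is nondecreasing and tends to $1$ as $t\to0$ (both behave like $t^{m-1}$), hence it is $\ge1$ on $(0,r)$, and by continuity also at $r$ where it is defined.

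For rigidity, equality in \eqref{eq:bishop1} at $t_0$ means $\operatorname{tr}W(t_0)=0$ with $W(t_0)\ge0$, so $W(t_0)=0$. In the index-form argument, equality forces the comparison field to be Jacobi, with $\langle\mathcal{R}v,v\rangle=c$ along $[0,t_0]$. This gives $\mathcal{R}=c\,\mathrm{Id}$, and the Jacobi equation then gives $\mathcal{A}=S_c\mathrm{Id}$ on $[0,t_0]$. Equality in \eqref{eq:bishop2} at $t_0$ forces the monotone ratio to be constant equal to $1$ on $(0,t_0]$, so equality holds in \eqref{eq:bishop1} there and we are in the previous case. The converse is a direct computation. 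The main obstacle is the matrix Riccati comparison with its singular initial condition at $t=0$, together with its equality case. I would handle it through the index lemma, which cleanly delivers both the inequality and the rigidity.
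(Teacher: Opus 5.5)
The paper does not prove this lemma; it quotes it from Bishop--Crittenden, where it comes from comparing Jacobi fields of $M$ with those of the model space. Your reduction is sound. You have $\sqrt{g}=\det\mathcal{A}$, $U=\mathcal{A}'\mathcal{A}^{-1}$ symmetric, and $W=U-u_c\,\mathrm{Id}$ with $W'+W(U+u_c)=c\,\mathrm{Id}-\mathcal{R}\ge 0$ and $W\to 0$ at $t=0$. The first inequality is exactly $\operatorname{tr}W\ge 0$, and quoting the Eschenburg--Heintze comparison, including its equality statement, would close the argument.

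The problem is the mechanism you commit to for the key step. Compare, inside $M$, the Jacobi field $J$ with $J(0)=0$, $J(t)=w$ against $X=\frac{S_c}{S_c(t)}E$, where $E$ is parallel with $E(t)=w$. The index lemma gives $\langle U(t)w,w\rangle=I_t(J,J)\le I_t(X,X)$. The hypothesis $K\le c$ gives $I_t(X,X)\ge\int_0^t\bigl(|X'|^2-c|X|^2\bigr)\,ds=u_c(t)|w|^2$. Both are lower bounds for $I_t(X,X)$, so nothing follows about $\langle U(t)w,w\rangle$ versus $u_c(t)|w|^2$. This is the argument for the opposite hypothesis $K\ge c$, where it yields $U\le u_c\,\mathrm{Id}$. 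Your rigidity step (``equality forces the comparison field to be Jacobi'') rests on the same nonexistent chain of inequalities.

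There are two ways to repair this.

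\emph{Transplant the other way, as Rauch does.} Move $J$ to $\mathbb{M}^m(c)$ by a linear isometry commuting with parallel transport, getting $\tilde X$ with $|\tilde X|=|J|$ and $|\tilde X'|=|J'|$. Let $\tilde J$ be the model Jacobi field with the same endpoint values. Apply the index lemma in the model, which has no conjugate points since $S_c>0$ on $(0,t]$:
\[
u_c(t)|w|^2=I^c_t(\tilde J,\tilde J)\le I^c_t(\tilde X,\tilde X)=\int_0^t\bigl(|J'|^2-c|J|^2\bigr)\,ds\le I_t(J,J).
\]
Equality forces $\langle\mathcal{R}J,J\rangle=c|J|^2$ along every such $J$.

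\emph{Stay with your Riccati equation.} A direct computation gives
\[
\bigl(\mathcal{A}^{*}W\mathcal{A}\bigr)'=\mathcal{A}^{*}\bigl(W^{2}+c\,\mathrm{Id}-\mathcal{R}\bigr)\mathcal{A}\ \ge 0,
\qquad
\mathcal{A}^{*}W\mathcal{A}=\mathcal{A}^{*}\mathcal{A}'-u_c\,\mathcal{A}^{*}\mathcal{A}\to 0 \ \text{ as } t\to 0 .
\]
Hence $\mathcal{A}(t)^{*}W(t)\mathcal{A}(t)=\int_0^t\mathcal{A}^{*}\bigl(W^2+c\,\mathrm{Id}-\mathcal{R}\bigr)\mathcal{A}\,ds\ge0$, which gives $W\ge0$. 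If $W(t_0)=0$, the integrand vanishes on $(0,t_0]$. This forces $W\equiv0$ and $\mathcal{R}\equiv c\,\mathrm{Id}$ there, and then $\mathcal{A}=S_c\,\mathrm{Id}$ by uniqueness for the Jacobi equation.

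Running this up to the first singular time of $\mathcal{A}$, where it would give $\det\mathcal{A}\ge S_c^{m-1}>0$, also proves invertibility of $\mathcal{A}$ on $(0,r)$. So you need not appeal separately to Rauch for that. The rest of your plan is fine: integrating to get the second inequality, and reducing equality in it to equality in the first.
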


\begin{lem}[Laplacian Comparison Theorem] Let $M$ be a complete Riemannian manifold and take $x_0$, $x_1 \in M$. Let $\gamma\colon\left[0, t(x_1)\right) \to \mathbb{R}$ be a minimizing geodesic joining $x_0$ and $x_1$ where $t(x)$ is the distance function $t(x):=\operatorname{dist}_{M}(x,x_0).$ Denote by  $K_M(\partial_t,x)$ the radial sectional curvature along the geodesic $\gamma$ and assume that $K_M(\partial_t,x) \leq K_f(\partial_t,t(x))$ then
\begin{equation*}
    \triangle t(x) \geq (n-1)\dfrac{f'(t(x))}{f(t(x))}. 
\end{equation*}
In particular, when $K_M$ is constant $f(r) = S_c(r)$ with $S_c$ given in \eqref{espaceformfunc}.
\end{lem}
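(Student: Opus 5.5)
The statement just above is the Laplacian Comparison Theorem. The plan is the standard Riccati comparison along the radial geodesic, carried out in a neighborhood of $\gamma$ free of cut points.

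First, assume $x_1$ is not in $\operatorname{Cut}(x_0)$ and $x_1\neq x_0$, so that $t$ is smooth near $\gamma(s)$ for $s\in(0,t(x_1)]$. In geodesic polar coordinates write $\triangle t=\partial_t \log\sqrt{g(t,\xi)}$, where $\sqrt{g}$ is the volume density of the exponential map. Equivalently, $\triangle t(\gamma(s))=\operatorname{tr}\mathcal{S}(s)$, where $\mathcal{S}=\operatorname{Hess}t|_{\gamma'^{\perp}}$ is the shape operator of the distance spheres. This operator satisfies the matrix Riccati equation
\begin{equation*}
\mathcal{S}'+\mathcal{S}^2+\mathcal{R}_{\gamma'}=0, \qquad \mathcal{S}(s)\sim \tfrac1s\,\mathrm{Id} \text{ as } s\to0^+,
\end{equation*}
with $\mathcal{R}_{\gamma'}=R(\cdot,\gamma')\gamma'$ restricted to $\gamma'^{\perp}$.

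Second, take traces and set $\sigma=\operatorname{tr}\mathcal{S}$. The argument should avoid Cauchy--Schwarz, since that goes the wrong way for a lower bound. Instead compare with the model: $\mathcal{S}_f=\frac{f'}{f}\mathrm{Id}$ solves $\mathcal{S}_f'+\mathcal{S}_f^2+K_f\,\mathrm{Id}=0$. Since $\mathcal{R}_{\gamma'}\le K_f\,\mathrm{Id}$ by hypothesis, the matrix Riccati comparison principle gives $\mathcal{S}\ge \mathcal{S}_f$ as quadratic forms for all $s\in(0,t(x_1)]$. Both solutions have the same $1/s$ asymptotics at $0$, so the comparison is started there.

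To prove $\mathcal{S}\ge\mathcal{S}_f$ concretely, use Jacobi fields and the index lemma. For a unit $v\perp\gamma'(s_0)$ we have $\langle\mathcal{S}v,v\rangle=I(J,J)$, where $J$ is the Jacobi field with $J(0)=0$ and $J(s_0)=v$. Define the transplanted field $W(s)=\frac{f(s)}{f(s_0)}E(s)$, with $E$ parallel and $E(s_0)=v$. Then:
\begin{itemize}
\item the index lemma gives $I(J,J)\le I(W,W)$;
\item this inequality needs the absence of conjugate points before $t(x_1)$, which holds because $\gamma$ is minimizing and $x_1\notin\operatorname{Cut}(x_0)$;
\item a direct computation gives $I(W,W)=f'(s_0)/f(s_0)$.
\end{itemize}
This yields only an upper bound, which is the wrong direction for a curvature upper bound. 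So for $K\le K_f$ the correct route is instead Rauch/Bishop: one shows $(\sqrt{g}/f^{m-1})'\ge0$. That is exactly Lemma \ref{thm:bishop}'s inequality \eqref{eq:bishop1}, with $S_c$ replaced by $f$ (the proof is the same, since only the Riccati comparison is used). Differentiating $\log$ gives $\triangle t=\partial_t\log\sqrt g\ge (m-1)f'/f$.

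This yields the inequality at non-cut points. The dimension written as $n$ in the statement is the dimension of $M$.

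The main obstacle is the cut locus. The statement places $x_1$ at the end of a minimizing geodesic, where $t$ may fail to be smooth. The plan is to prove the inequality at interior points $\gamma(s)$, $s<t(x_1)$, which are never cut points, and to note that $f/f'$ requires $f>0$ on $(0,t(x_1))$. At $x_1$ itself, interpret the inequality in the barrier or distributional sense. For $\varepsilon>0$, the function $t_\varepsilon(x)=\varepsilon+\operatorname{dist}(x,\gamma(\varepsilon))$ is an upper barrier for $t$ at $x_1$ that is smooth near $x_1$. Applying the smooth case to it and letting $\varepsilon\to0$ gives the bound. In the applications we only need it off $\operatorname{Cut}(x_0)$, which has measure zero.
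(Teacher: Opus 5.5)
\textbf{Verdict.} At non-cut points your argument is correct once you rest it on the cited Riccati comparison. Your own attempt to justify that comparison does not work, though, and your extension to cut points is false.

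\textbf{Non-cut points.} You have $\triangle t(\gamma(s))=\operatorname{tr}\mathcal S(s)$. The matrix Riccati comparison (Eschenburg--Heintze), with $\mathcal R_{\gamma'}\le K_f\,\mathrm{Id}$ and the common $\frac1s\mathrm{Id}$ asymptotics, gives $\mathcal S\ge\frac{f'}{f}\mathrm{Id}$, and tracing yields the lemma. That is the content of the Greene--Wu reference, which is all the paper offers.

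Your ``concrete'' justification of $\mathcal S\ge\mathcal S_f$ fails for three reasons:
\begin{enumerate}
\item Placing $W=\frac{f}{f(s_0)}E$ in $M$ and using the index lemma in $M$ is the argument for curvature \emph{lower} bounds.
\item The identity $I(W,W)=f'(s_0)/f(s_0)$ is false in $M$. Since $I(W,W)=f(s_0)^{-2}\int_0^{s_0}\bigl(f'^2-K_M(\gamma',E)f^2\bigr)\,ds$, the hypothesis $K_M\le K_f$ only gives $I(W,W)\ge f'(s_0)/f(s_0)$. Combined with $I(J,J)\le I(W,W)$, this yields nothing.
\item Falling back on Lemma~\ref{thm:bishop} amounts to citing the statement itself, because $\partial_t\log\bigl(\sqrt g/f^{m-1}\bigr)\ge0$ is literally $\triangle t\ge(m-1)f'/f$.
\end{enumerate}

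The fix is to transplant in the opposite direction, as in Rauch's theorem:
\begin{itemize}
\item Take the $M$-Jacobi field $J$ with $J(0)=0$ and $J(s_0)=v$, $|v|=1$. Set $\widetilde W=|J|\,\widetilde E$ along a radial geodesic of the model with warping function $f$, where $\widetilde E$ is unit and parallel, and let $I_f$ denote the model index form.
\item Since $\bigl|\,|J|'\,\bigr|\le|J'|$ and $K_M(\gamma',J)\le K_f$, you get $I_M(J,J)\ge I_f(\widetilde W,\widetilde W)$.
\item Now apply the index lemma \emph{in the model}, which needs only $f>0$ on $(0,s_0]$. Compare $\widetilde W$ with $\widetilde J=\frac{f}{f(s_0)}\widetilde E$; both vanish at $0$ and equal $\widetilde E(s_0)$ at $s_0$. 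This gives $I_f(\widetilde W,\widetilde W)\ge I_f(\widetilde J,\widetilde J)=f'(s_0)/f(s_0)$.
\end{itemize}
Hence $\operatorname{Hess}t(v,v)=I_M(J,J)\ge f'(s_0)/f(s_0)$ for every unit $v\perp\gamma'$, and you can take the trace.

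\textbf{Cut points.} The extension to a cut point $x_1$ is wrong. You have $t\le t_\varepsilon$ with equality at $x_1$, so $t_\varepsilon$ is an \emph{upper} support function. Such barriers bound $\triangle t$ from above; this is Calabi's trick under Ricci lower bounds. A lower bound on $\triangle t_\varepsilon(x_1)$ says nothing about $\triangle t$ from below.

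The inequality is in fact false there. On a flat torus, where $f(s)=s$, at a generic cut point $t$ is locally the minimum of two smooth distance functions with different gradients. So $t$ has no $C^1$ lower support function at such a point. Its distributional Laplacian also carries a negative singular part on $\operatorname{Cut}(x_0)$.

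The lemma must therefore be read only on $M\setminus\operatorname{Cut}(x_0)$, which is how the paper uses it. In the applications the cut locus is harmless not because it has measure zero, since a measure-zero set can still carry such a singular part. It is harmless because of the hypothesis $\mathcal H^{m-1}(\operatorname{Cut}(p_0)\cap B_M(p_0,r))=0$ fed into Proposition~\ref{LemmaSobolev}.
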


\begin{lem}
[Hessian Comparison Theorem]\label{lem:hessian-comparison}
Let $(M,g)$ be a complete Riemannian manifold and choose $x_0,x_1\in M$. 
Denote by $t(x)\coloneqq \operatorname{dist}_M(x_0,x)$ the distance function from $x_0$, and let $\gamma\colon [0,\rho(x_1)]\to M$ be a minimizing geodesic joining $x_0$ to $x_1$. Let $K$ denote the sectional curvatures of $M$ along $\gamma$. Then the Hessian of $t$ at $\gamma(t(x))$ for all $X\perp \gamma'(t)$ satisfies the following inequality
\begin{equation*}
\operatorname{Hess} t(x)(X,X) \geq \frac{S'_c(t)}{S_c(t)} \cdot \Vert X \Vert .  \end{equation*} 
Where $S_c$ is a model function given in \eqref{espaceformfunc}.
  
\end{lem}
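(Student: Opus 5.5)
The statement is the Hessian Comparison Theorem (Lemma \ref{lem:hessian-comparison}), and I will prove it by the standard Jacobi-field/index-form argument. The inequality should be read as $\operatorname{Hess} t(X,X)\ge \frac{S_c'}{S_c}(t)\,\|X\|^2$ for $X\perp\gamma'$, under the upper bound $K\le c$ on the sectional curvatures along $\gamma$, and with $t<\pi/\sqrt{c}$ when $c>0$.

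\textbf{Step 1 (reduction to an index form).} Since $\gamma$ is minimizing, $x_1$ is not conjugate to $x_0$ along $\gamma$. Assume for now that $x_1\notin\operatorname{Cut}(x_0)$, so that $t$ is smooth near $x_1$. Fix $X\perp\gamma'(t_1)$, where $t_1=t(x_1)$. Let $J$ be the Jacobi field along $\gamma$ with $J(0)=0$ and $J(t_1)=X$. Then $J\perp\gamma'$, and the second variation formula for the lengths of the geodesics $\gamma_s$ from $x_0$ to $\exp(sX)$ gives
\[
\operatorname{Hess} t(X,X)=I(J,J)=\int_0^{t_1}\bigl(|J'|^2-\langle R(J,\gamma')\gamma',J\rangle\bigr)\,ds = \langle J',J\rangle(t_1).
\]

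\textbf{Step 2 (a Rauch-type comparison).} Set $f=|J|$. Where $J\neq 0$, the identity $(f^2)''=2|J'|^2-2\langle R(J,\gamma')\gamma',J\rangle$, combined with the Cauchy--Schwarz bound $|J'|^2\ge (f')^2$ and the curvature bound $K\le c$, yields
\[
f''+c f\ge 0 .
\]
Since $f(0)=0$, the function $S_c$ solves $S_c''+cS_c=0$ with $S_c(0)=0$, $S_c'(0)=1$, and $S_c>0$ on $(0,t_1]$, the Sturm comparison applies: $\bigl(f'S_c-fS_c'\bigr)'\ge 0$ and this Wronskian vanishes at $0$. Hence $f'/f\ge S_c'/S_c$ on $(0,t_1]$. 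The point $0$ and possible zeros of $J$ need care. Near $0$ we have $J(s)\approx sJ'(0)$ with $J'(0)\neq 0$, so $f>0$ on some $(0,\varepsilon)$. Because there are no conjugate points, $J$ does not vanish on $(0,t_1]$.

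\textbf{Step 3 (conclusion).} At $t_1$,
\[
\langle J',J\rangle=ff'\ge \frac{S_c'}{S_c}\,f^2=\frac{S_c'}{S_c}(t_1)\,\|X\|^2,
\]
which is the claim. An alternative route avoids Sturm theory: take the comparison field $W(s)=\frac{S_c(s)}{S_c(t_1)}E(s)$ with $E$ parallel and $E(t_1)=X$. The index lemma gives $I(J,J)\le I(W,W)$, but this is the wrong direction for a lower bound, which is why I use Step 2 instead. If $x_1$ is a cut point, I interpret the inequality in the barrier (support) sense, or restrict to $M\setminus\operatorname{Cut}(x_0)$, as the paper does elsewhere.

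\textbf{Main obstacle.} The delicate point is Step 2. One must get the sign conventions right, since an upper curvature bound should give a lower Hessian bound. One must also justify that $f$ stays positive, so that the quotient $f'/f$ makes sense, and that $S_c$ stays positive; this is where the restriction $t<\pi/\sqrt{c}$ for $c>0$ enters. I would first handle the Wronskian argument on $(\varepsilon,t_1]$ and then let $\varepsilon\to0$, using $f(s)/s\to|J'(0)|$ and $S_c(s)/s\to1$.
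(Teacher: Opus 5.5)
Your proof is correct, and your reading of the statement is the right one. The lemma as printed omits the hypothesis $K\le c$ along $\gamma$ and writes $\|X\|$ where $\|X\|^2$ is meant. It also only makes sense off $\operatorname{Cut}(x_0)$, and for $t<\pi/\sqrt{c}$ when $c>0$.

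The paper gives no proof; it simply cites Schoen--Yau, where the standard argument is an index-form comparison with the model rather than your Sturm argument on $|J|$. There, one writes $J$ in a parallel orthonormal frame along $\gamma$ and transplants it to a field $\widetilde J$ along a unit-speed geodesic of $\mathbb{M}^m(c)$, so that $|\widetilde J|=|J|$ and $|\widetilde J'|=|J'|$. Then
\[
\operatorname{Hess} t(X,X)=I_M(J,J)\ \ge\ \int_0^{t_1}\bigl(|J'|^2-c|J|^2\bigr)\,ds=I_c(\widetilde J,\widetilde J)\ \ge\ I_c(\bar J,\bar J)=\frac{S_c'}{S_c}(t_1)\,\|X\|^2,
\]
where $\bar J=\frac{S_c(s)}{S_c(t_1)}\widetilde E$ is the model Jacobi field with the same endpoint. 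The last inequality is the index lemma applied \emph{in the model space}. So your remark that the index lemma points the wrong way is true only for your comparison field $W$ placed on $M$. Placed in the model, it gives exactly the bound you need.

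What the two routes buy:
\begin{itemize}
\item Yours is pure ODE. As a by-product it yields Rauch's estimate $|J(s)|\ge |J'(0)|\,S_c(s)$, which is the one-dimensional counterpart of the monotonicity in Bishop's lemma used elsewhere in the paper. However, it is intrinsically one-sided: the Cauchy--Schwarz step $|J'|\ge |f'|$ only helps under an upper curvature bound.
\item The index-form argument is symmetric. It compares any two manifolds with ordered radial curvatures, and it also gives the reverse Hessian inequality under a lower curvature bound.
\end{itemize}

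One small inaccuracy: a minimizing geodesic can end at a conjugate point (antipodal points on the round sphere). So ``since $\gamma$ is minimizing, $x_1$ is not conjugate'' is false as stated. It is your assumption $x_1\notin\operatorname{Cut}(x_0)$ that excludes this, and it is what guarantees both the existence of $J$ and $J\neq 0$ on $(0,t_1]$.
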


\subsection{Proof of Theorem \ref{thm:Cheng}}
\noindent \textbf{Theorem 1.4}
\textit{ Let $(M^m,g)$ be a Riemannian $m$-manifold and let $p_0\in M$. Assume that the radial sectional curvature of $M$ satisfies $K(\partial_t,v) \leq c$, $x\in B_M(p_0,r)\setminus \operatorname{Cut}(p_0)$, $v\perp \partial_t,\ |v|\leq1$, where $\partial_t$ denotes the radial unit vector field along minimal geodesics issuing from $p_0$. Let $\mathbb M^m(c)$ be the simply connected $m$-dimensional space form of constant sectional curvature $c$, and suppose that $\mathcal{H}^{m-1}\big(\operatorname{Cut}(p_0)\cap B_M(p_0,r)\big)=0$. Then the bottom of the Dirichlet spectrum of the geodesic ball $B_M(p_0,r)$ satisfies
\begin{equation}
\lambda_{1,p}\big(B_M(p_0,r)\big) \geq \lambda_{1,p}^{\mathcal{D}}\big(B(r)\big).    \end{equation}\label{comparison}
Moreover, equality holds if and only if the geodesic balls
$B_M(p_0,r)$ and $B(r)$ are isometric.}

\begin{proof}
Let $\mathbb{M}^m(c)$ be the $m$-dimensional model space of constant curvature $c$, with base point $x_0$ and radial coordinate given by $t(x):=\operatorname{dist}_{\mathbb{M}^n(c)}(x,x_0).$
Let $B(r)$ denote the geodesic ball of radius $r$ centered at $x_0$. Let us consider the eigenvalue problem associated with the model space $\mathbb{M}^m(c)$
\begin{equation}\label{eq:p-eig-modelo}
\begin{cases}
 \triangle_p u =  - \lambda_{1,p}^{\mathcal{D}}(B(r))\,|u |^{p-2}u 
& \text{em } B(r);\\
u = 0 & \text{em } \partial B(r).
\end{cases}
\end{equation}

We know from \cite[Lemma 5]{lindqvist92} that the problem above admits at least one solution $u$, such that $u$ does not change by sign in $B(r)$. Therefore, without loss of generality, we can assume $u\geq 0$ in $B(r) \cup \partial B(r)$, and also that $u>0$ inside $B(r)$. Furthermore, it follows from \cite[Theorem 1]{Bhattacharya1988} that there exists a solution to the eigenvalue problem \eqref{eq:p-eig-modelo} that is radial in $B(r)$. Since the first eigenvalue of the Dirichlet \(p\)-Laplacian is simple \cite[Theorem 1.3]{lindqvist}, we have that the solution $u$ must be radial. Thus, there exists $\omega:[0,r] \to \mathbb{R}$, such that $u(x) = \omega (t(x)).$ Furthermore, $\omega>0$ in $(0,r)$, $\omega(r)=0$. Let $u(x)=\omega(t(x))>0$ be  defined in $\mathbb{M}^{m}(c)$. Let $x\in B(r)\subset \mathbb{M}^{m}(c)$ and  $$\{e_1=\partial/\partial t, e_2=\partial/\partial\theta_1,\ldots, e_{n}=\partial/\partial_{m-1}\},$$ be an orthonormal basis of $T_x \mathbb{M}^{m}(c)$.
\begin{equation*}
\begin{aligned}
    \triangle_p(u)&= \diver (| \nabla u |^{p-2}\nabla u ) \\
    &= \sum_{i=1}^{m}
    \langle \nabla_{e_i}(|\nabla u|^{p-2}\nabla u), e_i\rangle  \\
    &=  \sum_{i=1}^{m} e_i(| \nabla u |^{p-2})\langle \nabla u, e_i\rangle + \sum_{i=1}^{m}|\nabla u |^{p-2}\langle \nabla_{e_i}( \nabla u),\, e_i\rangle \\
    &=  e_1(| \nabla u |^{p-2})\langle \nabla u, e_1\rangle + \sum_{i=2}^{m} \stackrel{=0}{e_i(| \nabla u |^{p-2})}\langle \nabla u, e_i\rangle + | \nabla u |^{p-2}\langle \nabla_{e_1}( \nabla u), e_1\rangle \\ 
    & \; \; \; \; +  \sum_{i=2}^{m}| \nabla u |^{p-2}\langle \nabla_{e_i}( \nabla u),\, e_i\rangle \\
&= (p-2)|\omega'(t)|^{p-2} \omega''(t)+|\omega'(t)|^{p-2}\left(\omega''(t)+\omega'(t)\stackrel{=0}{{\rm Hess} \,t(e_1,e_1)}\right) \\ 
& \; \; \; \; + |\omega'(t)|^{p-2}\omega'(t)\sum_{i=2}^{m}{\rm Hess} \, t(e_i,e_i) \\
    &= (p-2)|\omega'(t)|^{p-2}\omega''(t)+|\omega'(t)|^{p-2} \left[  \omega''(t)+ (m-1)\omega'(t) \frac{S_{c}'(t)}{S_{c}(t)}\right].
    \end{aligned}
\end{equation*}
Therefore, the eigenvalue equation $\triangle_p u +\lambda_{1,p}^{\mathcal{D}}(B(r))| u |^{p-2}u=0$ reads as
\begin{equation}\label{eq:eigenp}
    (p-2) |\omega'(t)|^{p-2}\omega''(t)+|\omega'(t)|^{p-2}\left[  \omega''(t) + (m-1) \omega'(t) \frac{S_{c}'(t)}{S_{c}(t)}\right]+ \lambda_{1,p}^{\mathcal{D}}\big(B(r)\big)\omega^{p-1}(t)=0.
\end{equation}
A direct computation yields $\left(|\omega'|^{p-2}\omega'\right)' = (p-1)|\omega'|^{p-2}\omega''$, and therefore \eqref{eq:eigenp} can be rewritten as
\begin{equation}\label{eqspace2}
\left( |\omega'(t)|^{p-2} \omega'(t) \right)' + (m-1) \frac{S_{c}'(t)}{S_{c}(t)} |\omega'(t)|^{p-2} \omega'(t)   + \lambda_{1,p}^{\mathcal{D}}(B(r))(\omega(t))^{p-1}=0.
\end{equation}
Since $\omega$ is radial and, by \cite[Lemma 5]{le}, $\omega \in \mathcal{C}^{1,\alpha}(B(r))$, regularity at the center implies that $\omega'(0)=0$. Using equation \eqref{eqspace2} in the model space, we observe that
\begin{equation*}
\left( (S_{c}(t))^{m-1}|\omega'(t)|^{p-2} \omega'(t) \right)'  +\lambda_{1,p}^{\mathcal{D}}(B(r))(S_{c}(t))^{m-1}(\omega(t))^{p-1}=0.
\end{equation*}
Therefore, we can integrate from $0$ to $t$, with $t \in (0,r)$, and using that $\omega'(0)=0$, we obtain
\begin{equation*}
    \omega'(t) \leq -\frac{\lambda_{1,p}^{\mathcal{D}}(B(r))(S_{c}(t))^{m-1}}{|\omega'(t)|^{p-2}}\int_0^{t} s^{m-1}(\omega(s))^{p-1}ds,
\end{equation*}
ensuring that $\omega'(t) < 0$ for $t \in (0,r)$ and by Hopf's Lemma (see \cite[Theorem 5]{Vazquez}) we have that $\omega'(t)=0$ iff $t=0$. On the other hand,
\[ B_M(r) \setminus \mathrm{Cut}(p_0) = \exp_{p_0} ( \{ t \xi \in T_{p_0}M; \, 0 \leq t < \min \{r,\text{d}\xi \}; \ | \xi |=1 \} ). \]
Now, consider the function $\overline{\omega} \colon B_M(p_0,r) \to [0,\infty)$ defined by
\begin{equation}\label{modelfunction}
\psi(x) \coloneq \left\{\begin{array}{cl}
\omega(t) & \text { if } x=\exp_{p_0}(t \xi), \ t \in[0, d(\xi)) \cap[0, r]; \\
0 & \text { if } x=\exp_{p_0}(d(\xi)\xi) \in \operatorname{Cut}(p_0).
\end{array}\right.
\end{equation}
Define the vector field $X$ on $B_M(p_0,r)$ by
\begin{equation}\label{Field}
X(x) \coloneq
\begin{cases}
- \dfrac{|\nabla \psi|^{p-2}\nabla\psi}{|\psi|^{p-2}\psi}, 
& \text{if } x\in B_M(p_0,r)\setminus\big(\{p_0\}\cup \operatorname{Cut}(p_0)\big); \\
0,
& \text{if } x\in B_M(p_0,r)\cap\big(\{p_0\}\cup \operatorname{Cut}(p_0)\big).
\end{cases}
\end{equation}
The vector field $X \in \mathcal{W}^{1,1}(B_M(p_0,r))$, we shall establish this result later. Setting $\mathcal{F} \coloneq \{p_0\}\cup \operatorname{Cut}(p_0)$ as a consequence, we have $\operatorname{div}(X) \in L^{1}(\Omega)$. It then follows from Proposition \ref{LemmaSobolev} that $X \in \mathcal{W}^{1,1}(\Omega)$, and that the weak divergence $\operatorname{Div}(X)$ coincides with the classical divergence $\operatorname{div}(X)$ in $\Omega \setminus \mathcal{F}$. By hypothesis, we have $\mathcal{H}^{n-1}\bigl( B_M(p_0,r) \cap \mathcal{F} \bigr)=0$
using Theorem \ref{Thm:BMS} together with Remark \ref{rmk:BMS}, we obtain the following estimate
\begin{eqnarray}\label{Newton-Montenegro-Barnabé}
\begin{aligned}
    \lambda_{1,p}(B_M(p_0,r)) &\geq \inf_{\Omega} \{ (1-p)\|X\|^q+\operatorname{Div}(X) \} \\
    &= \inf_{\Omega \setminus \mathcal{F}} \{ (1-p)\|X\|^q+\operatorname{Div}(X) \} \\
    &= \inf_{\Omega \setminus \mathcal{F}} \left\{ -\frac{\triangle_p \psi}{\psi^{p-1}} \right\}\cdot
    \end{aligned}
\end{eqnarray}

Note that, on the set where $t$ is smooth, we have $\nabla \psi(x) = \nabla (\omega \circ t)(x)$. By the chain rule, it follows that $\nabla \psi(x) = \omega'(t(x))\,\nabla t(x)$. Therefore, $|\nabla \psi(x)| = |\omega'(t(x))|\cdot |\nabla t(x)|$. Since $|\nabla t(x)| = 1$, it follows that $|\nabla \psi(x)| = |\omega'(t(x))|$.
\begin{eqnarray}\label{divergent}
    \begin{aligned}
      \triangle_p \psi &=  \operatorname{div}(|\nabla \psi|^{p-2}\nabla \psi) \\
      &= \operatorname{div}(|\omega' (t)|^{p-2}\omega'(t)\nabla t) \\
      &= (|\omega' (t)|^{p-2}\omega'(t))' + |\omega' (t)|^{p-2}\omega'(t) \triangle t \\
      &= | \omega(t)|^{p-2} \big( (p-1)\omega''(t) + \omega'(t) \triangle t \big).
    \end{aligned}
\end{eqnarray}
Since the radial sectional curvature satisfies
$K(\partial_t,v) \leq c$ for all $v \perp \partial_t$ with $|v| \leq 1$.
Then, by the Laplacian Comparison Theorem, we have $\triangle t \leq (m-1){S_c'(t)}/{S_c(t)}$ in  $B_M(p_0,r)\setminus \operatorname{Cut}(p_0)$, where $S_c$ denotes the model function of the simply connected space form $\mathbb{M}^m(c)$. Consequently, by Bishop's Theorem together with \eqref{divergent} and \eqref{eqspace2}, it follows that, for $0 < t < d(\xi)$, the following inequality holds
\begin{equation}\label{estimatesup}
\begin{aligned}
   -\dfrac{\triangle_p \psi(\exp_{p_0}(t \xi))}{\psi^{p-1}} &= -\frac{1}{|\omega(t)|} \big( (p-1)\omega''(t) + \omega'(t) \triangle t(\exp_{p_0}(t \xi)) \big) \\
&= -\frac{1}{|\omega(t)|} \left( (p-1)\omega''(t) + \omega'(t) \frac{\sqrt{g(t,\xi)}\,'}{\sqrt{g(t,\xi)}} \right) \\
&\geq  -\frac{1}{|\omega(t)|} \left( (p-1)\omega''(t) + (m-1)\omega'(t) \frac{S_c'(t)}{S_c(t)} \right) \\
&= \lambda_{1,p}^{\mathcal{D}}(B(r)).
\end{aligned}    
\end{equation}
where $g(t,\xi)$ denotes the determinant of the metric tensor expressed in these
coordinates. Thus, using the estimate given in \eqref{Newton-Montenegro-Barnabé} together with \eqref{estimatesup}, it follows that
\begin{equation*}
    \lambda_{1,p}(B_M(p_0,r)) \geq \lambda_{1,p}^{\mathcal{D}}(B(r)).
\end{equation*}

Now assume that equality holds in \eqref{comparison}. In this case, since the
infimum of the Rayleigh quotient is attained, we must, for the same reason as
before, obtain a positive solution $\varphi$ to the Dirichlet eigenvalue
problem.
\begin{equation}\label{eq:p-eigbola}
\begin{cases}
 \triangle_p \varphi =  - \lambda_{1,p}^{\mathcal{D}}(B_M(p_0,r))\,|\varphi |^{p-2}\varphi 
& \text{em } (B_M(p_0,r);\\
\varphi  =  0 & \text{em } \partial (B_M(p_0,r)).
\end{cases}
\end{equation}
In this way, we may apply Theorem \ref{thm:pbarta} with $\varphi = \psi$ and
observe that, for every $t \in (0, d(\xi))$, the following identity
holds:
\begin{equation}\label{IGUAL}
    \lambda_{1,p}(B_M(p_0,r)) =  - \frac{\triangle_p  \psi(\exp_{p_0}(t \xi))}{ \psi^{p-1}}.
\end{equation}
Therefore, \eqref{IGUAL} forces the inequality in \eqref{estimatesup} to become
an equality, and hence we must have that
\begin{equation*}
   \frac{\sqrt{g(t,\xi)}\,'}{\sqrt{g(t,\xi)}} =(m-1)\omega'(t) \frac{S_c'(t)}{S_c(t)} \cdot
\end{equation*}
By Bishop's Theorem, we have $\mathcal{R} = c \cdot \mathrm{Id}$ and
$\mathcal{A} = S_c \cdot \mathrm{Id}$ on the interval $[0,r]$, which yields an
isometry between $B_M(p_0,r)$ and $B(r)$.

\medskip
\noindent\textbf{Claim}. \textit{The vector field $X \in \mathcal{W}^{1,1}(B_M(p_0,r))$}. In fact, using Remark \ref{caracteeizationDiv}, it suffices to show that for every $f \in \mathcal{C}_0^{\infty}(B_M(p_0,r))$ one has $fX \in \mathcal{W}^{1,1}(B_M(p_0,r))$. Since $\mathcal{F} = \{p_0\} \cup \operatorname{Cut}(p_0)$, by Proposition \ref{LemmaSobolev} it is enough to prove that $\operatorname{div}(fX) \in L^{1}(B_M(p_0,r))$ and $fX \in \mathcal{C}^1(B_M(p_0,r)\setminus \mathcal{F}) \cap L^{\infty}(B_M(p_0,r))$. By construction, the vector field $X$ is smooth in $B_M(p_0,r)\setminus \mathcal{F}$. We now follow an argument analogous to that in Bessa--Montenegro \cite{BessaMontenegro}. First, observe that condition \eqref{Field} ensures that $fX \in \mathcal{C}^1(B_M(p_0,r)\setminus \mathcal{F}) \cap L^{\infty}(B_M(p_0,r))$. Moreover, in the set $B_M(p_0,r)\setminus \mathcal{F}$ we have $\operatorname{div}(X) = \operatorname{Div}(X)$ and
$\operatorname{div}(fX) = \langle \nabla f, X \rangle + f\,\operatorname{div}(X)$.
Integrating over $B_M(p_0,r)\cap \operatorname{supp}(f)$ yields
\begin{equation}\label{div}
\begin{aligned}
    \int_{B_M(p_0,r)\cap\operatorname{supp}(f)} \operatorname{div}(fX) \, \rm{d}\mu &= \int_{B_M(p_0,r)\cap\operatorname{supp}(f)} \left[ \langle \nabla f, X \rangle + f\operatorname{div}(X) \right] \, \rm{d}\mu \\
    &\leq \int_{B_M(p_0,r)\cap\operatorname{supp}(f)} |\langle \nabla f, X \rangle|  \, {\rm d}\mu + \int_{B_M(p_0,r)\cap\operatorname{supp}(f)} |f \operatorname{div}(X) |  \, \rm{d}\mu \\
    &\leq \int_{B_M(p_0,r)\cap\operatorname{supp}(f)} |\nabla f| |X|  \, {\rm d}\mu + \int_{B_M(p_0,r)\cap\operatorname{supp}(f)} |f| |\operatorname{div}(X)|  \, \rm{d}\mu. 
\end{aligned}
\end{equation}
Where $d\mu$ denotes the induced Riemannian measure on $B_M(p_0,r)\cap \operatorname{supp}(f)$. The first term in \eqref{div} is controlled by the following standard estimate.
\begin{equation}
    \int_{B_M(p_0,r)\cap\operatorname{supp}(f)} |\nabla f||X|  \, {\rm d}\mu \leq \sup_{\operatorname{supp}(f)}\left\{ |\nabla f| |X| \right\}\mu\left( B_M(p_0,r)\cap\operatorname{supp}(f) \right) < \infty.
\end{equation}
For the second term in \eqref{div}, we note that
\begin{equation}\label{div2}
\begin{aligned}
\int_{B_M(p_0,r)\cap\operatorname{supp}(f)} |f| |\operatorname{div}(X)| \, \rm{d}\mu &\leq  \sup_{\operatorname{supp}(f)}\left\{ |f| \right\} \left( \int_{B_M(p_0,r)\cap\operatorname{supp}(f)} |\operatorname{div}(X)|  \, \rm{d}\mu \right). 
\end{aligned}
\end{equation}
It remains to control the last term in \eqref{div2}. By Remark 2.1 in \cite{LimaMontenegroSantos2010}, we have
\begin{equation}\label{divpsi}
    \begin{aligned}
        \mathrm{div}(X) &= \mathrm{div} \left(- \frac{|\nabla \psi|^{p-2}\nabla \psi}{|\psi|^{p-2}\psi} \right) \\
        &= - \frac{\mathrm{div} \left( |\nabla \psi|^{p-2}\nabla \psi \right)}{\psi^{p-1}} - \left\langle  |\nabla \psi|^{p-2}\nabla \psi, \frac{1}{\psi^{p-1}}  \right\rangle \\
        &= -\frac{\triangle_p \psi}{\psi^{p-1}} + (p-1) \psi^{-p} |\nabla \psi|^{p-2}\langle \nabla \psi, \nabla \psi \rangle \\
        &= -\frac{\triangle_p \psi}{\psi^{p-1}} + (p-1) \frac{|\nabla \psi|^{p}}{\psi^p}.
    \end{aligned}
\end{equation}
Since \( |X(x)| \leq \bigl(|\psi'(t)|^{p-2}\psi'(t)\bigr)\big/\bigl(|\psi(t)|^{p-2}\psi(t)\bigr) < \infty\) for \( x = \exp_{p_0}(t\xi) \in B_M(p_0,r)\cap \operatorname{supp}(f) \), we may repeat the computation in \eqref{estimatesup} and combine it with \eqref{divpsi} to obtain
\begin{equation}\label{divl}
\mathrm{div}(X)  = \frac{1}{|\omega(t)|} \left( (1-p)\omega''(t) + (p-1)\dfrac{(\omega'(t))^p}{(\omega(t))^p} - \omega'(t) \frac{\sqrt{g(t,\xi)}\,'}{\sqrt{g(t,\xi)}} \right).
\end{equation}
Now, we integrate \eqref{divl} over $B_M(p_0,r)\cap \operatorname{supp}(f)$.
\begin{equation}\label{divbounded}
\begin{aligned}
         \int_{B_M(p_0,r)\cap\operatorname{supp}(f)} |\operatorname{div}(X)|  \, \rm{d}\mu  \leq  (p-1)&\int_\xi \int_0^{t(\xi)} \left( \left |\dfrac{\omega''}{\omega}\right| +  \left |\dfrac{(\omega')^p}{\omega^p}\right|\right)\sqrt{g(t,\xi)} dtd\xi \\
         +& \int_\xi \int_0^{t(\xi)} \left| \frac{\omega'}{\omega} \right| \frac{\sqrt{g(t,\xi)}\,'}{\sqrt{g(t,\xi)}} dtd\xi. 
         \end{aligned}
\end{equation}
Where $t(\xi)$ denotes the largest $t < \min\{d(\xi), r\}$ such that
$\exp_{p_0}(t\xi) \in \operatorname{supp}(f)$. In this case, note that the equation \eqref{divbounded} is uniformly bounded. Moreover, substituting \eqref{divbounded} into \eqref{div2}, we obtain the desired bound for the second term in \eqref{div}.
\end{proof}

\subsection{Proof of Theorem \ref{thm:pminsub_ball}}
\medskip

\noindent
\textbf{Theorem \ref{thm:pminsub_ball}} \textit{
Let $(N^n,g)$ be an $n$-dimensional Riemannian manifold and fix $p_0\in N$. Assume that the radial sectional curvatures of $N$ with respect to $p_0$ satisfy $K_N(\partial_t,v) \leq c$ for every $x\in B_N(p_0,r)\setminus \mathrm{Cut}(p_0)$ and every unit vector 
$v\perp \partial_t$. Suppose moreover that $\mathcal{H}^{n-1}\!\big(\mathrm{Cut}(p_0)\cap B_N(p_0,r)\big)=0$. Let $\varphi:M^m\hookrightarrow N^n$ be a minimal immersion and let 
$\Omega\subset \varphi^{-1}(B_N(p_0,r))$ be a connected component. 
Assume that the extrinsic distance function $t(x)=\operatorname{dist}_N(\varphi(x),p_0)$ satisfies $|\nabla^M t|\ge k>0$ in $\Omega$. Let $B(r)\subset \mathbb{M}^m(c)$ denote the geodesic ball of radius $r$ in the 
$m$-dimensional simply connected space form of constant curvature $c$. 
Then for every $2\le p<\infty$ and $r\le r_\ast(c)$, it holds that
\begin{equation}\label{ppBessaMontenegro}
\lambda_{1,p}(\Omega) \geq k^{p-2}\,\lambda_{1,p}^{\mathcal D}(B(r)).    \end{equation}
If equality holds for some bounded $\Omega$, then necessarily 
$k=1$ and $\Omega$ is isometric to $B(r)$.}

\begin{proof}
Let $\mathbb{M}^m(c)$ be the $m$-dimensional model space of constant curvature $c$, with base point $x_0$ and radial coordinate given by $t(x):=\operatorname{dist}_{\mathbb{M}^m(c)}(x,x_0).$
Let $B(r)$ denote the geodesic ball of radius $r$ centered at $x_0$. Consider the eigenvalue problem associated with the model space $\mathbb{M}^m(c)$
\begin{equation}\label{eq:p-eig-modelo2}
\begin{cases}
 \triangle_p u =  - \lambda_{1,p}^{\mathcal{D}}(B(r))\,|u |^{p-2}u 
& \text{em } B(r);\\
u  =  0 & \text{em } \partial B(r).
\end{cases}.
\end{equation}
Thus, using the same argument as in the proof of Theorem \ref{thm:Cheng}, there
exists a function $\omega\colon [0,r] \to \mathbb{R}$ such that
$u(x) = \omega(t(x))$. Furthermore, $\omega > 0$ in $(0,r)$, $\omega(r) = 0$,
and $\omega'(t) \leq 0$ e $\omega'(t) = 0$ iff $t=0$. Let $u(x) = \omega(t(x)) > 0$ be normalized, i.e, $\omega(0)=1$ be defined in $\mathbb{M}^m(c)$.
Therefore, the eigenvalue equation $\triangle_p u + \lambda_{1,p}^{\mathcal{D}}(B(r))\,|u|^{p-2}u = 0$ can be written as in \eqref{eq:eigenp}, that is, for all $t \in [0,r]$ we have
\begin{equation}\label{eq:eigenp2}
   \left( |\omega'(t)|^{p-2} \omega'(t) \right)' + (m-1) \frac{S_{c}'(t)}{S_{c}(t)} |\omega'(t)|^{p-2} \omega'(t) + \lambda_{1,p}^{\mathcal{D}}(B(r))  (\omega(t))^{p-1}=0.
\end{equation}
For each $\xi \in T_{p_0}N$ with $|\xi|=1$, define $d(\xi)>0$ to be the largest real number such that the geodesic $\gamma_\xi(t)=\exp_{p_0}(t\xi)$ minimizes the distance between $\gamma_\xi(0)=p_0$ and $\gamma_\xi(t)$ for all $t \in [0,d(\xi)]$ (with the possibility that $d(\xi)=\infty$). Under these assumptions, we have that
\[ B_N(r) \setminus \mathrm{Cut}(p_0) = \exp_{p_0} ( \{ t \xi \in T_{p_0}N; \,  0 \leq t < \min \{r,\text{d}\xi \}; \ | \xi |=1 \} ). \]
Consider the function $\overline{\omega} \colon B_N(p_0,r) \to [0,\infty)$ defined by
\begin{equation}\label{modelfunction}
\overline{\omega}(x) \coloneqq \begin{cases}
\omega(t) & \text { if } x=\exp_{p_0}(t \xi), \ t \in[0, d(\xi)) \cap[0, r]; \\
0 & \text { if } x=\exp_{p_0}(d(\xi)\xi) \in \operatorname{Cut}(p_0).
\end{cases}
\end{equation}
In this setting, let $\psi \colon \Omega \to \mathbb{R}$ be defined by $\psi \coloneqq \overline{\omega} \circ \varphi$, where $\varphi:M \hookrightarrow N$ is a minimal immersion. Consider the vector field $X$ given by
\[ X \coloneqq - \frac{|\nabla \psi|^{p-2}\nabla \psi}{|\psi|^{p-2}\psi} \cdot \]
Note that the vector field $X$ is not smooth precisely in the set where the distance function from $p$ is not smooth, that is, $\mathcal{F} = \varphi^{-1}(\operatorname{Cut}_N(p_0))$. By hypothesis, we have $\mathcal{H}^{n-1}\bigl( \Omega \cap \mathcal{F} \bigr)=0$.
Due to the regularity of $\psi$, it follows a fortiori that $X \in \mathcal{C}^1(\Omega \setminus \mathcal{F}) \cap L^{\infty}(\Omega)$. Furthermore, the divergence of $X$ can be explicitly computed as in \eqref{divpsi}. As a consequence, we have $\operatorname{div}(X) \in L^{1}(\Omega)$ it then follows from \cite[Lemma 3.1]{BessaMontenegro} that $X \in \mathcal{W}^{1,1}(\Omega)$, and that the weak divergence $\operatorname{Div}(X)$ coincides with the classical divergence $\operatorname{div}(X)$ in $\Omega \setminus \mathcal{F}$.
Using Theorem \ref{Thm:BMS} together with Remark \ref{rmk:BMS}, we obtain the following estimate
\begin{equation}\label{pbartabessamontenegro}
\begin{aligned}
    \lambda_{1,p}(\Omega) &\geq \inf_{\Omega} \{ (1-p)\|X\|^q+\operatorname{Div}(X) \} \\
    &= \inf_{\Omega \setminus \mathcal{F}} \{ (1-p)\|X\|^q+\operatorname{Div}(X) \} \\
    &= \inf_{\Omega \setminus \mathcal{F}} \left\{ -\frac{\triangle_p \psi}{\psi^{p-1}}  \right\}.
    \end{aligned}
\end{equation}
Let $\varphi \colon M \hookrightarrow N$ be an isometric minimal immersion of a complete Riemannian $m$-manifold $M$ into a complete Riemannian $n$-manifold $N$ with sectional curvature $K_N \leq c \leq 0$. Let $\Omega \subset \varphi^{-1}(B(r))$ be a connected component and let $\psi = \overline{\omega} \circ \varphi \colon \Omega \to [0,\infty)$, where $\overline{\omega} \colon B(r) \to [0,\infty)$ denotes the transplanted function on $N$, as defined in \eqref{modelfunction}. Our purpose is to compute the $p$-Laplacian $\triangle_p \psi$. Let $\{e_1,\dots,e_m\}$ be a local orthonormal frame of $T_xM$ and we identify $e_i$ with $d\varphi_x(e_i)\in T_{\varphi(x)}N$. In this setting, we obtain on $\Omega\setminus\mathcal F$ that
\begin{equation}
\begin{aligned}\label{Jorge-Kautrofiots}
    \triangle_p \psi &= \operatorname{div} \left( |\nabla \psi|^{p-2} \nabla \psi  \right) \\
&= \sum_{i=1}^m \left\langle \nabla_{e_i} |\nabla \psi|^{p-2}\,\nabla \psi, e_i \right\rangle \\
&= \sum_{i=1}^m  \left[ \overbrace{ \left\langle  \nabla |\nabla \psi|^{p-2}, e_i \right\rangle}^{=\, e_i \left( |\nabla \psi|^{p-2}\right)} \left\langle \nabla \psi, e_i \right\rangle + |\nabla \psi|^{p-2} \left\langle \nabla_{e_i}\nabla \psi, e_i \right\rangle \right] \\
&= |\nabla \psi|^{p-2} \triangle \psi + \sum_{i=1}^m \left\langle  \nabla |\nabla \psi|^{p-2}, e_i \right\rangle \left\langle \nabla \psi, e_i \right\rangle \\
&= |\nabla \psi|^{p-2} \triangle \psi + \left\langle  \nabla |\nabla \psi|^{p-2},
\nabla \psi \right\rangle \\
&= |\nabla \psi|^{p-2} \triangle \psi + (p-2)|\nabla \psi|^{p-3} \left\langle 
\nabla \psi, \nabla |\nabla \psi| \right\rangle.
\end{aligned}
\end{equation}
Moreover, on the set $\{|\nabla\psi|>0\}$ one has 
\[\left\langle \nabla\!\left(|\nabla \psi|^{p-2}\right), \nabla \psi \right\rangle
=(p-2)|\nabla \psi|^{p-4}\operatorname{Hess}\psi(\nabla \psi, \nabla \psi),\] 
and therefore substituting this identity into \eqref{Jorge-Kautrofiots}, we obtain 
\begin{equation}\label{jorgekaushess}
\Delta_p \psi = |\nabla \psi|^{p-2}\Delta \psi
+(p-2)|\nabla \psi|^{p-4}\operatorname{Hess}\psi(\nabla \psi, \nabla \psi).   
\end{equation}
Hence, the above identity holds almost everywhere in $\Omega\setminus\mathcal F$. On the other hand, $\Delta\psi$ can be computed via the Jorge--Koutrofiotis formula
\cite[p. 13]{JorgeKoutrofiotis1981} (see also \cite{BessaMontenegro2003}). Since $\psi=\overline{\omega}\circ\varphi$, we obtain
\begin{equation}\label{eq:JK-Laplacian}
\Delta\psi(x) = \sum_{i=1}^m \operatorname{Hess}_N \overline{\omega}\big(\varphi(x)\big)(e_i,e_i)
+ \left\langle \nabla^N \overline{\omega}\big(\varphi(x)\big),\sum_{i=1}^m \mathrm{I}\mathrm{I}(e_i,e_i)\right\rangle,
\end{equation}
where $\mathrm{I}\mathrm{I}$ denotes the second fundamental form of $\varphi$ and $\{e_1,\dots,e_m\}$ is a local orthonormal frame on $M$
(identified with $d\varphi_x(e_i)\in T_{\varphi(x)}N$). Since $\varphi$ is minimal immersion, its mean curvature vector vanishes, hence $\sum_{i=1}^m \mathrm{I}\mathrm{I}(e_i,e_i)=mH=0$.
Therefore, substituting \eqref{eq:JK-Laplacian} into \eqref{jorgekaushess}, we get 
\begin{equation}\label{JorgeKausp}
    \begin{aligned}
        \triangle_p \psi(x) &= |\nabla \psi|^{p-2} \left( \sum_{i=1}^m \operatorname{Hess}_N \overline{\omega}\big(\varphi(x)\big)(e_i,e_i) + \stackrel{=0}{\left\langle \nabla^N \overline{\omega}\big(\varphi(x)\big) , \sum_{i=1}^m \mathrm{I}\mathrm{I}(e_i,e_i)  \right\rangle}  \right)  \\ &+ (p-2)|\nabla \psi|^{p-4}\operatorname{Hess}\psi(\nabla \psi, \nabla \psi) \\
  &= |\nabla \psi|^{p-2} \sum_{i=1}^m \operatorname{Hess}_N \overline{\omega}\big(\varphi(x)\big)(e_i,e_i) + (p-2)|\nabla \psi|^{p-4}\operatorname{Hess}\psi(\nabla \psi, \nabla \psi). 
    \end{aligned}
\end{equation}
Assume $\varphi(x)=\exp_{p_0}(t\xi)$ and set $t=d_N(p_0,\varphi(x))$.
Let $\{e_1,\ldots,e_m\}$ be an orthonormal basis of $T_xM$, identified with $d\varphi_x(e_i)\in T_{\varphi(x)}N$. Choose the basis so that $e_2,\ldots,e_m$ are tangent to the geodesic sphere
$\partial B_N(p_0,t)$ and $e_1=\cos(\alpha(x))\,\partial_t+\sin(\alpha(x))\,\partial_\theta$,
where $\partial_\theta\in\mathrm{span}\{e_2,\ldots,e_m\}$, $|\partial_\theta|=1$, and
$\langle \partial_\theta,\partial_t\rangle=0$.
Since $\overline{\omega}=\omega\circ t$, we have $\operatorname{Hess}_N\overline{\omega}=\omega''(t)\,dt\otimes dt+\omega'(t)\operatorname{Hess}_N(t)$ on $N\setminus \mathrm{Cut}(p_0)$. Therefore, proceeding analogously to \cite[p. 12]{BessaMontenegro}, we can compute $\triangle\psi$ for $x\in\Omega\setminus\mathcal F$, obtaining the following expression for the Laplacian of $\psi$.
\begin{equation}\label{Laplacianoimersão}
\begin{aligned}
\triangle \psi(x)
&= \sum_{i=1}^m  \operatorname{Hess}_N \overline{\omega}\big(\varphi(x)\big)(e_i,e_i) \\
&= \omega''(t)\,\bigl(1 - \sin^2 \left( \alpha(x) \right) \bigr) \\
&\quad + \omega'(t) \sin^2 \left( \alpha(x) \right)
   \operatorname{Hess}_N(t)\!\left(\frac{\partial}{\partial\theta},
   \frac{\partial}{\partial\theta}\right) \\
&\quad + \omega'(t)\sum_{i=2}^m 
   \operatorname{Hess}_N(t)(e_i, e_i).
\end{aligned}
\end{equation}
Substituting \eqref{Laplacianoimersão} into \eqref{JorgeKausp}, we compute \eqref{jorgekaushess} for $x\in \Omega\setminus\mathcal F$. Since $\psi=\omega(t)$ and $\nabla\psi=\omega'(t)\nabla^M t$,
we have $|\nabla\psi|=|\omega'(t)|\,|\nabla^M t|
=|\omega'(t)|\cos\alpha(x)$. Moreover, $\operatorname{Hess}\psi(\nabla\psi,\nabla\psi)
=(\omega'(t))^2\cos^2\alpha(x)\, \operatorname{Hess}\psi(e_1,e_1)$, and
$\operatorname{Hess}\psi(e_1,e_1) = \omega''(t)\cos^2\alpha(x) + \omega'(t)\sin^2\alpha(x) \operatorname{Hess}_N(t)(\partial_\theta,\partial_\theta)$. Therefore,
\begin{equation}
\begin{aligned}
     \triangle_p \psi &= (p-1)\cos^{p-2}{(\alpha(x))}|\omega'(t)|^{p-2}\omega''(t)\left(1 - \sin^2{\alpha(x)} \right) \\
     &+(p-1)|\omega'(t)|^{p-2}\cos^{p-2}{(\alpha(x))}\,\omega'(t) \sin^2 \left( \alpha(x) \right) \operatorname{Hess}_N(t)\!\left(\frac{\partial}{\partial\theta},
   \frac{\partial}{\partial\theta}\right) \\
   &+ |\omega'(t)|^{p-2}\cos^{p-2}{(\alpha(x))}\, \omega'(t)\sum_{i=2}^m 
   \operatorname{Hess}_N(t)(e_i, e_i). 
\end{aligned}
\end{equation}
Now we will proceed as follows, we will add and subtract appropriate terms to achieve the desired model space equality, more precisely by adding and subtracting the terms $(m-1)\bigl(S_c'(t)/S_c(t)\bigr)|\omega'(t)|^{p-2}\omega'(t)\cos^{p-2}{\left(\alpha(x)\right)}$, $(p-1)\bigl(S_c'(t)/S_c(t)\bigr)|\omega'(t)|^{p-2}\omega'(t)\sin^2{\left(\alpha(x)\right)}\cos^{p-2}{\left(\alpha(x)\right)}$ we obtain  for $t\in(0,r)$  the following expression
\begin{equation}\label{pLaplacianoimersão}
\begin{aligned}
\triangle_p \psi(x)
&=  \cos^{p-2}{\left(\alpha(x)\right)}\left[ \Bigl(|\omega' \bigl(t \bigr)|^{p-2} \omega' \bigl(t\bigr)\Bigr)' +  (m-1)\frac{S'_c(t)}{S_c(t)} |\omega' \bigl(t\bigr)|^{p-2} \omega' \bigl(t\bigr) \right] \\
&\quad + \sum_{i=2}^m \left( \operatorname{Hess}_N(t)(e_i, e_i) - \frac{S'_c(t)}{S_c(t)}\right)|\omega' (t)|^{p-2} \omega'(t)\cos^{p-2}{\left(\alpha(x)\right)} \\
&\quad + (p-1)\left(\frac{S'_c(t)}{S_c(t)}\omega'(t) - \omega''(t) \right)  |\omega'(t)|^{p-2} \sin^2\! \alpha(x)\cos^{p-2}{\left(\alpha(x)\right)}  \\
& \quad + (p-1)\left( \operatorname{Hess}_N(t) \left( \dfrac{\partial}{\partial \theta}, \dfrac{\partial}{\partial \theta} \right) - \frac{S'_c(t)}{S_c(t)}\right) |\omega' (t)|^{p-2}\omega'(t) \sin^2\! \alpha(x)\cos^{p-2}{\left(\alpha(x)\right)}.
\end{aligned}
\end{equation}
Observe that the first term in \eqref{pLaplacianoimersão} can be replaced by means of the model space identity \eqref{eq:eigenp2}. Consequently, on the set $\{ x \in \Omega \,;\, |\nabla \psi(x)| > 0 \}$, we have $\cos(\alpha(x)) > 0$. Therefore, it follows that
\begin{equation}\label{pLaplacianoimersão2}
\begin{aligned}
-\frac{\triangle_p \psi(x)}{(\psi(x))^{p-1}\cos^{p-2}{\left(\alpha(x)\right)}}
&= \lambda_{1,p}^{\mathcal{D}}(B(r)) \\
&\quad - \sum_{i=2}^m \left( \operatorname{Hess}_N(t)(e_i, e_i) - \frac{S'_c(t)}{S_c(t)}\right) |\omega' (t)|^{p-2} |\omega(t)|^{1-p} \omega' (t) \\
&\quad - (p-1)\left( \frac{S'_c(t)}{S_c(t)}\omega'(t) - \omega''(t) \right)  |\omega'(t)|^{p-2} |\omega(t)|^{1-p} \sin^2\! \alpha(x) \\
& \quad - (p-1)\left( \operatorname{Hess}_N(t) \left( \dfrac{\partial}{\partial \theta}, \dfrac{\partial}{\partial \theta} \right) - \frac{S'_c(t)}{S_c(t)}\right) |\omega' (t)|^{p-2}|\omega(t)|^{1-p} \omega'(t) \sin^2\! \alpha(x).
\end{aligned}
\end{equation}
Our goal is to show that the terms after $\lambda_{1,p}^{\mathcal D}(B(r))$ are nonnegative. By assumption, the radial sectional curvature of $N$ with respect to $p_0$ satisfies $\mathrm K_N(\partial_t,v)\le c$ for all $x\in B_N(p_0,r)\setminus\mathrm{Cut}(p_0)$ and all unit vectors $v\perp\partial_t$. Hence, by the Hessian Comparison Theorem (see Lema \ref{lem:hessian-comparison}), we obtain
\[
\operatorname{Hess}_N(t(x))(v,v)-\frac{S_c'(t)}{S_c(t)}\ge 0,
\]
for every $v\perp\partial_t$, where $t=t(x)$ and $x=\exp_{p_0}(t\xi)$. Under these assumptions, and recalling that $\omega'(t)\le 0$, we have
\begin{equation}
\begin{aligned}
\sum_{i=2}^m \left( \operatorname{Hess}_N(t)(e_i, e_i) - \frac{S'_c(t)}{S_c(t)}\right) |\omega' (t)|^{p-2} |\omega(t)|^{1-p} \omega' (t) &\leq 0. \\ \left( \operatorname{Hess}_N(t) \left( \dfrac{\partial}{\partial \theta}, \dfrac{\partial}{\partial \theta} \right) - \frac{S'_c(t)}{S_c(t)}\right) |\omega' (t)|^{p-2}|\omega(t)|^{1-p}\omega'(t) \sin^2\! \alpha(x) &\leq 0.
\end{aligned}
\end{equation}
It remains to verify the validity of the following inequality
\begin{equation}\label{inequality2}
\left( \frac{S'_c(t)}{S_c(t)}\omega'(t) - \omega''(t) \right)  |\omega'(t)|^{p-2} |\omega(t)|^{1-p} \sin^2\! \alpha(x) \leq 0.
\end{equation}
To prove this, observe that \eqref{inequality2} imposes constraints on the model space equation \eqref{eq:eigenp2}. More precisely, in order to guarantee \eqref{inequality2}, it suffices to prove that the following inequality holds for all $t\in(0,r)$
\begin{equation}\label{restricmodel}
        \left( p+m-2 \right)\frac{S'_c(t)}{S_c(t)}|\omega'(t)|^{p-2}\omega'(t) + \lambda_{1,p}^{\mathcal{D}}(B(r))(\omega(t))^{p-1} \leq 0.
\end{equation}
Where, $B(r)$ denotes a geodesic ball in the model space $\mathbb{M}^m(c)$. Without loss of generality, we may assume that $c\in\{-1,0,1\}$. For simplicity, we begin with the flat model space, that is, the case $c=0$.

\medskip
\noindent\textbf{Case 1: The flat model space ($c=0$)}. In this setting, substituting the model function given in \eqref{espaceformfunc} with $c=0$ into the equation \eqref{restricmodel}, we obtain the following result.
\begin{equation}\label{restricmodel1}
        \left( p+m-2 \right)\frac{1}{t}|\omega'(t)|^{p-2}\omega'(t) + \lambda_{1,p}^{\mathcal{D}}(B(r))(\omega(t))^{p-1} \leq 0.
\end{equation}
In this case, we begin by observing that
\begin{equation}\label{flatine1}
    \left( t^{m-1} \left| \omega'(t) \right|^{p-2}\omega'(t) \right)' + \lambda_{1,p}^{\mathcal{D}}(B(r))t^{m-1}(\omega(t))^{p-1}=0.
\end{equation}
Define the function $\mathcal{V}_0\colon (0,r)\to\mathbb{R}$ by
\begin{equation*}
\mathcal{V}_0(t) \coloneqq \exp{  \left\{- \lambda_{1,p}^{\mathcal{D}}(B(r))\frac{t^2}{2(p+m-2)}  \right\}}.  
\end{equation*}
Observe that the function $\mathcal{V}_0$ satisfies the following equation
\begin{equation}\label{flatine2}
    \left( t^{m-1} \mathcal{V}_0'(t) \right)' + \left(\frac{m}{p+m-2} \right)\lambda_{1,p}^{\mathcal{D}}(B(r))t^{m-1}\left( 1 - \frac{\lambda_{1,p}^{\mathcal{D}}(B(r))t^2}{m(p+m-2)} \right)\mathcal{V}_0(t)=0.
\end{equation}
Multiplying equation \eqref{flatine1} by $\mathcal{V}_0(t)$ and equation \eqref{flatine2} by $-(\omega(t))^{p-1}$, summing the resulting expressions, and integrating from $0$ to $t$, we obtain the following estimate for all $t \in(0,r)$.
\begin{equation}\label{flateq}
    \begin{aligned}
    t^{m-1} \left( \mathcal{V}_0'(t)\left( \omega(t) \right)^{p-1} - \mathcal{V}_0(t)|\omega'(t)|^{p-2}\omega'(t) \right) &=  \left( \frac{(p-2) \lambda_{1,p}^{\mathcal{D}}(B(r)) }{p+m-2} \right) \int_{0}^{t} s^{m-1}(\omega(s))^{p-1}\mathcal{V}_0(s)ds \\
      &+  \left( \frac{\lambda_{1,p}^{\mathcal{D}}(B(r))}{(p+m-2)} \right)^2 \int_{0}^{t} s^{m+1}(\omega(s))^{p-1}\mathcal{V}_0(s)ds \\
      &+ \int_{0}^{t}s^{m-1}\omega'(s) \underbrace{\left[ (p-1)|\omega(s)|^{p-2} - |\omega'(s)|^{p-2}  \right]}_{\coloneq G(s)}\mathcal{V}'_0(s)ds.
    \end{aligned}
\end{equation}
Studying the sign of the right-hand side of \eqref{flateq} reduces to determining the sign of the following function
\begin{equation*}
   \Psi_0(t) \coloneq \int_{0}^{t} s^{m-1}  \underbrace{\left[ (p-2)|w(s)|^{p-1} + \dfrac{\lambda_{1,p}^{\mathcal{D}}(B(r))}{p+m-2}s^2|\omega(s)|^{p-1} + (p-1)|\omega(s)|^{p-2}|\omega'(s)| - |\omega'(s)|^{p-1} \right]}_{\coloneq \Phi_0(s)}\mathcal{V}_0(s)ds. 
\end{equation*}
Since $\Psi_0'(t)=t^{m-1}\Phi_0(t)\mathcal{V}_0(t)$ and $t^{m-1}\mathcal{V}_0(t)>0$ for all $t\in(0,r)$, the behavior of $\Psi_0$ is entirely determined by the sign of $\Phi_0$. Note that, in the case $p=2$, as shown in \cite[Theorem 1.10]{BessaMontenegro}, one has
\[
\Phi_0(s)=\frac{\lambda_{1,2}^{\mathcal{D}}(B(r))}{m}\,s^{2}\,|\omega(s)|.
\]
In particular, $\Phi_0(s)>0$ for all $s\in(0,r)$, which guarantees that $\Psi$ is positive throughout the interval $(0,r)$. We are therefore left to consider the case $p\in(2,\infty)$. Recall that since $\omega$ is the first Dirichlet $p$-eigenfunction in $B(r)$, it belongs to $\mathcal{C}^{1,\alpha}(B(r))$ for some $\alpha\in(0,1)$ (see \cite[Lemma 5]{le}). Since $\omega$ is normalized so that $\omega(0)=1$, we obtain $\Phi_0(0)=p-2>0$. Moreover, by Hopf's boundary point lemma (see \cite[Theorem 5]{Vazquez}), we have $\Phi_0(r)=-|\omega'(r)|^{p-2}<0$. Therefore, by continuity, the function $\Phi_0$ admits at least one zero in $(0,r)$. For $c=0$, we define the real number $r_\star(c)\in(0,r)$ by
\begin{equation}
r_\star(0) \coloneq \inf \left\{ t>0 \; ; \; \int_{0}^{t} s^{m-1} \Phi_0(s) \mathcal{V}_0(s) ds \leq 0 \right\}.
\end{equation}
The figure below depicts the behavior of the function $\Psi_0$ and the procedure used to define the critical radius $r_\star$.
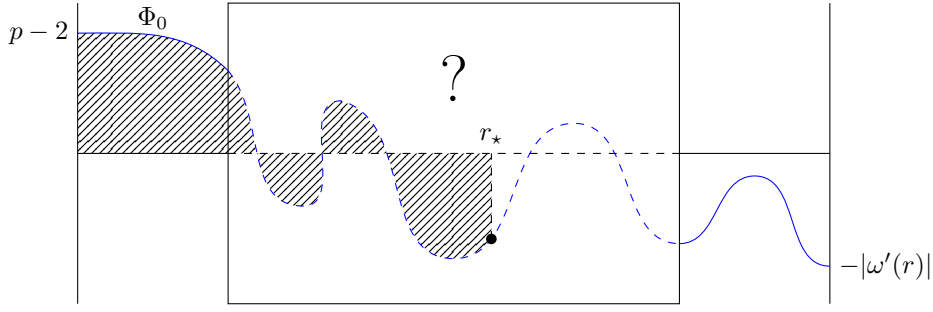
\begin{figure}[htb!]
    \centering
\begin{tikzpicture}[scale=1,>=stealth]
		\def\u{5.0}
		\def\v{2.0}
		\def\l{3.0}
		\def\k{2.0}

		\path[draw,name path=l1] (-\u,0) -- (-\l,0);
		\path[draw,name path=l2,dashed] (-\l,0) -- (\l,0);
		\path[draw,name path=l3] (\l,0) -- (\u,0);
		
		\path[draw,name path=l4] (-\u,-\v) -- (-\u,\v);
		\path[draw,name path=l5] (\u,-\v) -- (\u,\v);
		\path[name path=aux1] (0.5,-\v) -- (0.5,\v);
		\path[draw,name path=q1] (-\l,-\k) rectangle (\l,\k);

		\path[draw,name path=curva1,blue] (-\u,1.6) to[out=0] (-\l,1.1);
		
		\path[draw,name path=curva2,blue,dashed] (-\l,1.1) to[out=-50,in=180] (-\l+1,-0.7) to[out=0,in=180] (-\l+1.5,0.7) to[out=-15,in=180] (-\l+3,-1.4)to[out=0,in=180] (-\l+4.6,0.4)to[out=0,in=180] (\l,-1.2);
		
		\path[draw,name path=curva3,blue] (\l,-1.2) to[out=0,in=180] (\l+1,-0.3) to[out=0,in=180] (\u,-1.5);

		\path[name intersections={of=aux1 and curva2}];
		\coordinate (p0) at (intersection-1);
		\path[draw,name path=l6,dashed] (p0) -- (p0|-\u,0) node[above] {$r_{\star}$};

		\node[left] (t1) at (-\u,1.6) {$p-2$}; 	
		\node[] (t2) at (0,1) {{\Huge ?}}; 
		\node[right] (t3) at (\u,-1.5) {$-|\omega'(r)|$};

		\path[name intersections={of=l2 and curva2}];
		\coordinate (p1) at (intersection-1);
		\coordinate (p2) at (intersection-2);
		\coordinate (p3) at (intersection-3);
		
		\path[pattern=north east lines] (-\u,1.6) to[out=0] (-\l,1.1)  to[out=-55,in=105] (p1) -- (-\u,0) ;
		
		\path[pattern=north east lines] (p1) to[out=-80,in=185] (-\l+1,-0.7) to[out=0,in=-88] (p2);
		
		\path[pattern=north east lines] (p2) to[out=90,in=180] (-\l+1.5,0.7) to[out=-20,in=110] (p3);
		
		\path[pattern=north east lines] (p3) to[out=-70,in=180] (-\l+3,-1.4) to[out=10,in=-126] (p0) -- (p0|-\u,0) -- cycle;
		
		\fill (p0) circle (2pt);
	
		\node (tk) at (-4,1.8) {$\Phi_0$};

	\end{tikzpicture}

    \caption{Typical behavior of the function $\Phi_0(s)$.}
    \label{fig:placeholder}
\end{figure}

\noindent In this case, for all $t\in(0,r_\star)$, we have $\mathcal{V}_0'(t)\,\omega(t)^{p-1}-\mathcal{V}_0(t)\,|\omega'(t)|^{p-2}\omega'(t)\geq 0$, which guarantees the inequality required in \eqref{restricmodel1}.

\medskip
\noindent\textbf{Case 2: The spherical model space ($c=1$).} In this case, by substituting $c=1$ into the model function \eqref{espaceformfunc}, in order to prove \eqref{inequality2} it suffices to show that the following inequality holds for all $t\in(0,\pi/2)$.

\begin{equation}\label{restricmodelshpere}
        \left( p+m-2 \right)\frac{S'_1(t)}{S_1(t)}|\omega'(t)|^{p-2}\omega'(t) + \lambda_{1,p}^{\mathcal{D}}(B(r))(\omega(t))^{p-1} \leq 0.
\end{equation}
First, we note that
\begin{equation}\label{modelsheric1}
    \left( S_1^{m-1}(t)|\omega'(t)|^{p-2}\omega'(t)\right)' +\lambda_{1,p}^{\mathcal{D}}(B(r)) S_1^{m-1}(t)(\omega(t))^{p-1} = 0.
\end{equation}
We define $\mathcal{V}_1\colon(0,\pi/2)\to\mathbb{R}$ by
\begin{equation*}
    \mathcal{V}_1(t) \coloneqq  \left( S_1'(t) \right)^{-\frac{\lambda_{1,p}^{\mathcal{D}}(B(r))}{p+m-2}}.  
\end{equation*}
After some algebraic manipulations, we obtain the following identity
\begin{equation}\label{modelsheric2}
    \left(S_1^{m-1}(t)\mathcal{V}'_1(t) \right)' - \lambda_{1,p}^{\mathcal{D}}(B(r))S_1^{m-1}(t)\left[ \frac{m}{p+m-2} + \frac{1}{p+m-2} \left| \frac{S_1(t)}{S_1'(t)} \right|^{2} \left( 1 + \dfrac{\lambda_{1,p}^{\mathcal{D}}(B(r))}{p+m-2} \right)  \right]\mathcal{V}_1(t)=0.
\end{equation}
Multiplying \eqref{modelsheric1} by $\mathcal{V}_1(t)$ and \eqref{modelsheric2} by $-(\omega(t))^{p-1}$, we arrive at the following expression
\begin{equation}\label{identitysheprical}
    \begin{aligned}
    S_1^{m-1}(t) \left( \mathcal{V}'_1(t)\left( \omega(t) \right)^{p-1} - \mathcal{V}_1(t)|\omega'(t)|^{p-2}\omega'(t) \right) &= \int_{0}^{t} S_1^{m-1}(s)\omega'(s) \left[ (p-1)|\omega(s)|^{p-2} - |\omega'(s)|^{p-2}  \right] \mathcal{V}_1'(s)ds \\
    &+C_1(m,p) \int_{0}^{t} S_1^{m-1}(s)(\omega(s))^{p-1}\mathcal{V}_1(s)ds \\
    &+C_2(m,p) \int_{0}^{t}  S_1^{m-1}(s) (\omega(s))^{p-1} \left|\frac{S_1(s)}{S_1'(s)}\right|^2\mathcal{V}_1(s)ds.
    \end{aligned}
\end{equation}
Where, $C_1(m,p)$ and $C_2(m,p)$ are the constants defined by
\begin{equation}\label{CK}
    C_1(m,p) \coloneq  \lambda_{1,p}^{\mathcal{D}}(B(r)) \left( \frac{p+2m-2}{p+m-2} \right) \quad \text{e} \quad C_2(m,p) \coloneq  \lambda_{1,p}^{\mathcal{D}}(B(r))\left( \frac{1}{p+m-2} + \frac{\lambda_{1,p}^{\mathcal{D}}(B(r))}{(p+m-2)^2}\right).
\end{equation}
Note that, by substituting $p=2$, the right-hand side of \eqref{identitysheprical} is always positive. For $p \in (2,\infty)$, we can rewrite the right-hand side of \eqref{identitysheprical} as follows:
\begin{equation*}
       \Psi_1(t) \coloneq \int_{0}^{t}  S_1^{m-1}(s)  \underbrace{ \left[ \left( C_1 + C_2\left|\frac{S_1(s)}{S_1'(s)}\right|^2\right) |\omega(s)|^{p-1} + C_3\left|\frac{S_1(s)}{S_1'(s)}\right| \left( \frac{|\omega'(s)|^{p-1}}{p-1}- |\omega(s)|^{p-2}|\omega'(s)|  \right) \right]}_{\coloneq \Phi_1(s)}\mathcal{V}_1(s)ds 
\end{equation*}
where $C_3(m,p)=\lambda_{1,p}^{\mathcal{D}}(B(r))/(p+m-2)$. 
Since $\Psi_1'(t)=S_1^{m-1}(t)\Phi_1(t)\mathcal{V}_1(t)$ and $S_1^{m-1}(t)\mathcal{V}_1(t)>0$ for all $t\in(0,r)$ with $r<\pi/2$, the behavior of $\Psi_1(t)$ on $(0,r)$ is completely determined by $\Phi_1(t)$. As observed previously, $\omega(0)=1$, $\omega'(0)=0$, and $S_1(0)=0$, so it follows that $\Phi(0)=C_1(m,p)>0$. By Hopf's Lemma, $\omega'(r)<0$, and thus 
\[\Phi_1(r)=C_3(m,p)\,\frac{S_1(r)}{S_1'(r)}\,\frac{1}{p-2}|\omega'(r)|^{p-2}>0.\]
Now, since $p>2$. Then Young's inequality can be applied in the following form
\begin{equation*}
    |\omega(t)|^{p-2}|\omega'(t)| \leq \dfrac{p-2}{p-1}|\omega(t)|^{p-1} + \frac{1}{p-1}|\omega'(t)|^{p-1}.
\end{equation*}
In this way, we obtain the following bound for the nonlinear term of $\Psi_1$
\begin{equation*}
  \frac{1}{p-1}|\omega'(t)|^{p-1} -  |\omega(t)|^{p-2}|\omega'(t)| \geq -\dfrac{p-2}{p-1}|\omega(t)|^{p-1}. 
\end{equation*}
We now observe that
\begin{equation}\label{sphericlowerbound}
    \begin{aligned}
       \Phi_1(s) =& \left( C_1 + C_2\left|\frac{S_1(s)}{S_1'(s)}\right|^2\right) |\omega(s)|^{p-1} + C_3\left|\frac{S_1(s)}{S_1'(s)}\right| \left( \frac{|\omega'(s)|^{p-1}}{p-1}- |\omega(s)|^{p-2}|\omega'(s)|  \right) \\
           \geq& \left( C_1 + C_2\left|\frac{S_1(s)}{S_1'(s)}\right|^2\right) |\omega(s)|^{p-1} -  C_3\dfrac{p-2}{p-1}\left|\frac{S_1(s)}{S_1'(s)}\right||\omega(t)|^{p-1} \\
           =& \left( C_1 + C_2\left|\frac{S_1(s)}{S_1'(s)}\right|^2 - C_3\dfrac{p-2}{p-1}\left|\frac{S_1(s)}{S_1'(s)}\right| \right)|\omega(t)|^{p-1}.
    \end{aligned}
\end{equation}
The positivity of the last term in \eqref{sphericlowerbound} can be verified by an elementary computation. The idea is to show that the function lies above a quadratic barrier $Q$ on this interval, as illustrated in the figure below
 
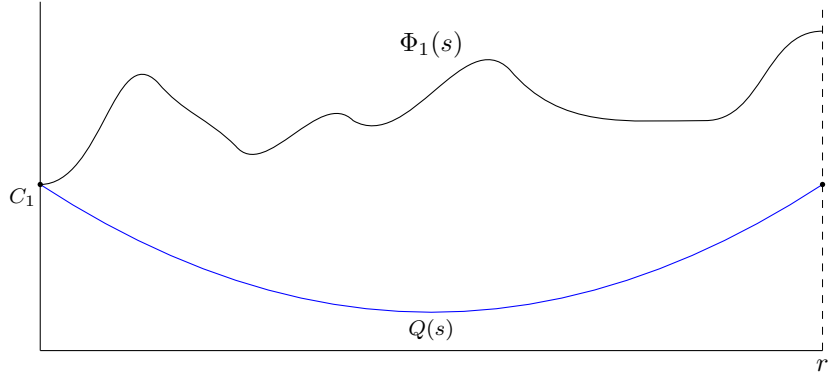
\begin{figure}[htb!]
    \centering
\begin{tikzpicture}[scale=1,>=stealth,x=5.2cm,y=1.7cm]
		\def\u{3.0}
		\def\r{2.0}
		\def\v{2.0}
		\def\t{2.0}
		\coordinate (p1) at (-\r,0.5);
		\coordinate (p2) at (0,\v);
		\coordinate (p3) at (\r,0.5);

		\path[draw,name path=l1] (-0,0) -- (2,0);
		\path[draw,name path=l2] (0,0) -- (0,0.91*\u);
		\path[draw,name path=l3,dashed,very thin] (2,0) -- (2,0.9*\u);
		\path[draw,domain=-0:2,blue,name path=para] plot ({\x},{0.3+pow(\x-1,2)});
		
		\path[name intersections={of=l2 and para}];
		\coordinate (p0) at (intersection-1);
		\fill (p0) circle (1pt);
		
		\path[name intersections={of=l3 and para}];
		\coordinate (p1) at (intersection-1);
		\fill (p1) circle (1pt);
		
		\node[below=-1] (t2) at (1,0.3) {\footnotesize{$Q(s)$}}; 
		\node[below] (t3) at (2,0) {$r$};
		\node[] (t4) at (1,2.4) {\normalsize{$\Phi_1(s)$}};
		\node[below left=-2] (t5) at (p0) {\footnotesize{$C_1$}};

		\draw[] (p0) to[out=0] (0.3,2.1) to[out=-50] (0.5,1.6)  to[out=-50] (0.8,1.8) to[out=-30] (1.2,2.2) to[out=-50,in=180] (1.7,1.8)  to[out= 0,in=180] (2,2.5);
	\end{tikzpicture}
\caption{Quadratic barrier for $\Phi_1$.}
\end{figure}

\noindent Consequently, $\Phi_1(t)>0$ for all $t\in(0,r)$, which ensures that $\Psi_1(t)>0$ on the same interval. Therefore, it follows from \eqref{identitysheprical} that, for all $t\in(0,r)$ with $r<\pi/2$, one has $\mathcal{V}_1'(t)\bigl(\omega(t)\bigr)^{p-1}-\mathcal{V}_1(t)|\omega'(t)|^{p-2}\omega'(t)\ge 0$, as required in \eqref{restricmodelshpere}.

\medskip
\noindent\textbf{Case 3: The hiperbolic model space ($c=-1$).} For the remaining case, we consider the model function \eqref{espaceformfunc} with $c=-1$. In this setting, inequality \eqref{inequality2} holds, and it suffices to prove that

\begin{equation}\label{restricmodelhiperbolic}
        \left( p+m-2 \right)\frac{S'_{-1}(t)}{S_{-1}(t)}|\omega'(t)|^{p-2}\omega'(t) + \lambda_{1,p}^{\mathcal{D}}(B(r))(\omega(t))^{p-1} \leq 0.
\end{equation}
A straightforward computation yields the following identity
\begin{equation}\label{modelhiperbolic1}
    \left( S_{-1}^{m-1}(t)|\omega'(t)|^{p-2}\omega'(t)\right)' + \lambda_{1,p}^{\mathcal{D}}(B(r))S_{-1}^{m-1}(t)(\omega(t))^{p-1} = 0.
\end{equation}
Define the function $\mathcal{V}_{-1}\colon (0,r)\to\mathbb{R}$ by
\begin{equation*}
    \mathcal{V}_{-1}(t) \coloneqq  \left( S_{-1}'(t) \right)^{-\frac{\lambda_{1,p}^{\mathcal{D}}(B(r))}{p+m-2}}.  
\end{equation*}
After some algebraic manipulations, we obtain the following equation
\begin{equation*}\label{modelhiperbolic2}
    \left(S_{-1}^{m-1}(t)\mathcal{V}_{-1}'(t) \right)' + \lambda_{1,p}^{\mathcal{D}}(B(r))S_{-1}^{m-1}(t)\left[ \frac{m}{p+m-2} - \frac{1}{p+m-2} \left( 1 + \dfrac{\lambda_{1,p}^{\mathcal{D}}(B(r))}{p+m-2} \right)\left|\frac{S_{-1}(t)}{S_{-1}'(t)}\right|^2  \right]\mathcal{V}_{-1}(t)=0.
\end{equation*}
We multiply the equation above by $-(\omega(t))^{p-1}$ and equation \eqref{modelhiperbolic1} by $\mathcal{V}_{-1}(t)$. Adding the resulting expressions and integrating from $0$ to $t$, we obtain the following
\begin{equation}\label{identityhiperbolic}
    \begin{aligned}
        S_{-1}^{m-1}(t)\left( \mathcal{V}_{-1}'(t)\left( \omega(t) \right)^{p-1} - \mathcal{V}_{-1}(t)|\omega'(t)|^{p-2}\omega'(t) \right) &= \int_{0}^{t} S_{-1}^{m-1}(s)\omega'(s) G(s)\mathcal{V}_{-1}'(s)ds \\
    &+ C_2(m,p) \int_{0}^{t}  S_{-1}^{m-1}(s)(\omega(s))^{p-1}\left|\frac{S_{-1}(s)}{S_{-1}'(s)}\right|^2\mathcal{V}_{-1}(s)ds \\
    &+C_4(m,p)\int_{0}^{t}  S_{-1}^{m-1}(s)(\omega(s))^{p-1}\mathcal{V}_{-1}(s)ds.
    \end{aligned}
\end{equation}
Here, $C_4(m,p) \coloneq \dfrac{p-2}{p+m-2}$, $C_2(m,p)$ is defined as in \eqref{CK}, and $G$ is given by \eqref{flateq}. In this setting, the sign of the right-hand side of \eqref{identityhiperbolic} is completely determined by the sign of the function $\Psi_{-1}\colon (0,r)\to\mathbb{R}$, defined by
\begin{equation*}
       \Psi_{-1}(t) \coloneq \int_{0}^{t}  S_1^{m-1}(s)  \underbrace{ \left[ \left( C_4 + C_2\left|\frac{S_{-1}(s)}{S_{-1}'(s)}\right|^2\right) |\omega(s)|^{p-1} + C_3\left|\frac{S_{-1}(s)}{S_{-1}'(s)}\right| |\omega'(s)| \mathcal{H}(s) \right]}_{\coloneq \Phi_{-1}(s)}\mathcal{V}_{-1}(s)ds. 
\end{equation*}
Given $C_3 = C_3(m,p)$ in the spherical case, we have that $\Psi_{-1}'(t) = S_1^{m-1}(t)\Phi_{-1}(t)\mathcal{V}_{-1}(t)$. Since the weight $S_1^{m-1}(t)\mathcal{V}_{-1}(t)$ is strictly positive for all $t \in (0,r)$, it follows that the qualitative behavior of the derivative $\Psi_{-1}'$ is entirely determined by the sign of the function $\Phi_{-1}$. Recall that $G(t) = (p-1)|\omega(t)|^{p-2} - |\omega'(t)|^{p-2}$. In this setting, for $p=2$ it is straightforward to verify that $\Psi_{-1}(t)>0$. Therefore, it remains to analyze the case $p\in(2,\infty)$. Recall that $\omega(0)=1$, $\omega'(0)=0$, and $\omega'(r)<0$. Consequently, we obtain that $\Phi_{-1}(0)=C_4(m,p)$ and that
\[\Phi_{-1}(r)= -C_3(m,p) \left| \frac{S_{-1}(r)}{S_{-1}'(r)} \right||\omega'(r)|^{p-1}<0.\]
Since $\omega \in \mathcal{C}^{1,\alpha}(B(r))$, it follows that the function $\Phi_{-1}$ is H\"older continuous. Consequently, $\Phi_{-1}(t)$ admits at least one real zero in $(0,r)$. Under these assumptions, for $c=-1$ we define $r_\star(c)$ as the real number determined as follows.
\begin{equation}
r_\star(-1) \coloneq \inf \left\{ t>0 \; ; \; \int_{0}^{t} S_1^{m-1}(s) \Phi_{-1}(s) \mathcal{V}_{-1}(s) ds \leq 0 \right\}.
\end{equation}
The function $\Phi_{-1}$ enjoys the same structural properties illustrated in Figure \eqref{fig:placeholder}, after adapting the construction to the hyperbolic case. Hence, for all $t \in (0,r_\star(-1))$, the right-hand side of \eqref{identityhiperbolic} is positive, which implies
\[
\mathcal{V}_{-1}'(t)\bigl(\omega(t)\bigr)^{p-1}
-\mathcal{V}_{-1}(t)\,|\omega'(t)|^{p-2}\omega'(t)
\ge 0.
\]
Consequently, the inequality in \eqref{restricmodelhiperbolic} holds on $(0,r_\star(-1))$. We now return to inequality \eqref{inequality2}. From Cases 1, 2, and 3, it follows that for every $t \in (0,r_\star(c))$, where $r_\star(c)$ is a constant depending only on the model space $\mathbb{M}^m(c)$,
\begin{equation*}
\left( \frac{S'_c(t)}{S_c(t)}\omega'(t) - \omega''(t) \right)  |\omega'(t)|^{p-2} |\omega(t)|^{1-p} \sin^2\! \alpha(x) \leq 0.
\end{equation*}
Thus, in the interval $(0,r_\star(c))$, using equation \eqref{pLaplacianoimersão2}, we obtain the following comparison inequality for the function $\psi$
\begin{equation}\label{modelcontrol}
    -\frac{\triangle_p \psi(x)}{(\psi(x))^{p-1}\cos^{p-2}{\left(\alpha(x)\right)}} \geq \lambda_{1,p}^{\mathcal{D}}(B(r_\star(c))).
\end{equation}
Recall that $\mathcal{F}=\varphi^{-1}(\operatorname{Cut}_N(p))$ and that $\mathcal{H}^{m-1}(\Omega\cap\mathcal{F})=0$. Therefore, the estimate \eqref{pbartabessamontenegro} can be applied to \eqref{modelcontrol} in $\Omega$. According to hypothesis we have, the extrinsic distance function $t=\operatorname{dist}_N(\varphi(\cdot),p)$ restricted to $\Omega$ is a uniform submersion, namely $|\nabla^{M} t|\ge k>0$ in $\Omega$. Since $|\nabla^{M} t|=\cos\alpha(x)$, it follows that $\cos\alpha(x)\ge k$ in $\Omega$, and consequently we obtain the following estimate.

\begin{equation*}
    \lambda_{1,p}(\Omega) \geq \inf_{x \in {\Omega \setminus \mathcal{F}}} \left\{ - \frac{\triangle_p \psi(x)}{\psi(x)^{p-1}} \right\} \geq k^{p-2} \cdot \lambda_{1,p}^{\mathcal{D}}(B(r)),
\end{equation*}
This guarantees the desired inequality in \eqref{ppBessaMontenegro}. We now establish the rigidity criterion required in Theorem \ref{thm:pminsub_ball}. Assume that $\Omega$ is bounded and that $\lambda_{1,p}(\Omega)=k^{p-2} \cdot \lambda_{1,p}^{\mathcal{D}} \bigl(B(r_\star(c))\bigr)$. Observe that $\varphi(\partial\Omega) \subseteq \partial B(r_\star(c))$. Since $\varphi\in \mathcal{C}^{\infty}(M)$, it follows that $\psi\in \mathcal{C}^{1,\alpha}(\Omega)$, and hence $\psi$ may be used as a test function in Theorem \ref{thm:pbarta}. Consequently, we obtain $\lambda_{1,p}(\Omega)=\triangle_p\psi/\psi^{p-1}$. However, in view of \eqref{pLaplacianoimersão2}, this identity forces $\cos{\alpha(x)}=k_0$ and therefore the following equality
\begin{equation*}
 \begin{aligned}
 \sum_{i=2}^m \left( \operatorname{Hess}_N(t)(e_i, e_i) - \frac{S'_c(t)}{S_c(t)}\right) |\omega' (t)|^{p-2} |\omega(t)|^{1-p} \omega' (t)&= 0, \\
 \left( \frac{S'_c(t)}{S_c(t)}\omega'(t) - \omega''(t) \right)  |\omega'(t)|^{p-2} |\omega(t)|^{1-p} \sin^2\! \alpha(x) &= 0, \\
 \left( \operatorname{Hess}_N(t) \left( \dfrac{\partial}{\partial \theta}, \dfrac{\partial}{\partial \theta} \right) - \frac{S'_c(t)}{S_c(t)}\right) |\omega' (t)|^{p-2}|\omega(t)|^{1-p} \omega'(t) \sin^2\! \alpha(x) &= 0,   
 \end{aligned}   
\end{equation*}
for all $t$ such that $\varphi(x)=\exp_{p_0}(t\xi)$. Therefore, $ \sin^2\! \alpha(x)=0$ and $\cos^2 \alpha(x)=1$ for all $x \in \Omega$ and  we have that $e_1(\varphi(x))= \partial/\partial t$. We now integrate the vector field $\partial/\partial t$. It follows that, in $N \cap \varphi(\Omega)$, there exists a minimizing geodesic joining $\varphi(x)$ to $p_0$. Consequently, $\Omega$ is a geodesic ball in $M$ of radius $r$, centered at $\varphi^{-1}(p_0)$. Since $\psi$ is a 
$p$-eigenfunction transplanted to $M$ from the model space $\mathbb{M}(c)$, it follows that
\[
    \triangle_p^{M} \omega(t) = \triangle_p^{\mathbb{M}(c)} \omega(t), \quad \text{for all} \ t = \operatorname{dist}(p_0,\varphi(q)) \ \text{with} \ q \in M.
\]
Finally, it suffices to rewrite the above identity in geodesic coordinates, which yields the following equation
\begin{equation*}
  \left( |\omega'(t)|^{p-2} \omega'(t) \right)' +  \frac{\sqrt{g(t,\xi)}\,'}{\sqrt{g(t,\xi)}}(t,\theta)|\omega'(t)|^{p-2} \omega'(t)   = \left( |\omega'(t)|^{p-2} \omega'(t) \right)' + (m-1) \frac{S_{c}'(t)}{S_{c}(t)} |\omega'(t)|^{p-2} \omega'(t).
\end{equation*}
Therefore, since $w'(t)=0$ if and only if $t=0$, it follows from the Bishop--Gromov Comparison Theorem that $\Omega = B_M(\varphi^{-1}(p_0), r)$ is isometric to the geodesic ball $B(r)$ in the model space $\mathbb{M}(c)$.
\end{proof}

\subsection{Proof of Corollary \ref{radiuscor}}
\textbf{Corollary \ref{radiuscor}}
\textit{Assume the hypotheses of Theorem \ref{thm:pminsub_ball}
in the Euclidean case ($c=0$).
Let $\varphi : M^m \hookrightarrow \mathbb{R}^n$ be a minimal immersion and let 
$\Omega$ be a connected component of 
$\varphi^{-1}(B_{\mathbb{R}^n}(p_0,r))$.
Then
\begin{equation}\label{lowerradiostone}
r \geq \left( k^{p-2} \cdot \frac{\lambda_{1,p}^{\mathcal D}(B_1^m)}{\lambda_{1,p}(\Omega)}\right)^{1/p},
\end{equation}
where $B_1^m$ denotes the unit ball in $\mathbb{R}^m$.}

\begin{proof}
Let $B_r^{m}$ denote the Euclidean ball of radius $r$ in $\mathbb{R}^{m}$. To prove the desired result, it suffices to show that
\begin{equation}\label{scaling}
    \lambda_{1,p}^{\mathcal{D}}(B^{m}_r) = \frac{1}{r^p}\lambda_{1,p}^{\mathcal D}(B_1^m).
\end{equation}
Indeed, if \eqref{scaling} holds, then Theorem \ref{thm:pminsub_ball}
yields the following lower bound for the $p$-fundamental tone
\begin{equation}
    \lambda_{1,p}(\Omega) \geq k^{p-2} \cdot \lambda_{1,p}^{\mathcal{D}}(B^{m}_r) = k^{p-2} \cdot \frac{1}{r^p}\lambda_{1,p}^{\mathcal D}(B_1^m).
\end{equation}
Thus establishing the desired inequality in \eqref{lowerradiostone}.
It remains to prove \eqref{scaling}. To this end, we use the variational
characterization given in \eqref{eq:Rayleigh quotient}. Consequently,
\begin{equation}
      \lambda_{1,p}^{\mathcal{D}}(B^{m}_r)
    = \inf \left\{ \frac{\displaystyle \int_{B^{m}_r} |\nabla u |^p dx} {\displaystyle \int_{B^{m}_r} |u|^p dx}; \, u \in \mathrm{W}^{1,p}_0(\Omega), \, \varphi \not\equiv 0
    \right\}.
\end{equation}
Now consider the change of variables $x=ry$ and define $v(y)=u(ry)$. Then
$\nabla_y v(y)= r\, \nabla_x u(ry)$, and, since $dx = r^{m}dy$, it follows that 
\begin{equation}\label{eqcor}
     \frac{\displaystyle \int_{B^{m}_r} |\nabla u |^p dx} {\displaystyle \int_{B^{m}_r} |u|^p dx} =  r^{-p} \cdot\frac{\displaystyle \int_{B^{m}_1} |\nabla v |^p dy} {\displaystyle \int_{B^{m}_1} |v|^p dy}.
\end{equation}
Finally, note that 
\[
T\colon W^{1,p}_0(B_r^{m}) \to W^{1,p}_0(B_1^{m}), 
\qquad (Tu)(y) = u(ry),
\]
is a bijective mapping. Hence, taking the infimum over 
$W^{1,p}_0(B_r^{m})$ in \eqref{eqcor} gives the equality in \eqref{scaling}.
\end{proof}

\subsection{Proof of Theorem \ref{thmlowerptone}}

We now establish a quantitative lower bound for the first $p$-eigenvalue of extrinsic domains in submanifolds immersed in a product space $N \times \mathbb{R}$.  The estimate reflects the interplay between the radial curvature of the ambient manifold $N$ and the mean curvature of the immersion. More precisely, under an upper radial curvature bound on $N$ and a local control on the mean curvature, we obtain an explicit positive lower bound for the $p$-fundamental tone of connected components of extrinsic geodesic balls. The result can be interpreted as a comparison theorem in which the quantity $(m-2)(S'_c/S_c)(r) - h(x_0,r)$ measures the competition between the ambient curvature and the mean curvature of the immersion.
\medskip

\noindent \textbf{Theorem 1.7}
Let $\varphi\colon M \hookrightarrow N \times \mathbb{R}$ be a complete immersed $m$-submanifold with locally bounded mean curvature. Assume that $N$ has radial sectional curvature bounded above by $K_N(\partial_t,v) \leq c$ along the geodesics issuing from a point $x_0\in N$. Let $\Omega(r)\subset \varphi^{-1}\left(B_N(x_0,r)\times\mathbb{R}\right)$ be a connected component and suppose that $r \leq \min \left\{\operatorname{inj}_N(x_0), \pi/2\sqrt{c} \right\}$. Additionally, suppose that $r$ is chosen
\begin{enumerate}
\item If $|h(x_0,r)|< \Lambda < \infty$ let $r \leq (S_c / S'_c)(\Lambda/(m-2))$;
\item If $\displaystyle \lim_{r \to \infty}h(x_0,r) = \infty$ let $r \leq (S_c / S'_c)(h(x_0,r)/(m-2))$, where $r_0$ is so that $(m-2)(S'_c/S_c)-h(x_0,r_0)=0.$
\end{enumerate}
Then, for every $p>1$, whenever either $1$ or $2$ holds, it follows that
\begin{equation}\label{lowestimateptone}
 \lambda_{1,p}(\Omega(r)) \geq    \frac{1}{p^{p}}
\left((m-2)\frac{S'_c}{S_c}(r)-h(x_0,r)\right)^{p} > 0.
\end{equation}

\begin{proof}
Let $\varphi \colon M \hookrightarrow N \times \mathbb{R}$ be a complete immersed $m$-dimensional submanifold with locally bounded mean curvature, and assume that $N$ has radial sectional curvature bounded above by $K_N(\partial_t,v) \leq c$ along the geodesics issuing from a point $x_0 \in N$. Let $\Omega(r)$ be a connected component of $\Omega(r) \coloneqq \varphi^{-1}\bigl(B_N(x_0,r) \times \mathbb{R}\bigr)$. Define the function $\widetilde{t} \colon N \times \mathbb{R} \to \mathbb{R}$ by $\widetilde{t}(x,s) \coloneqq \operatorname{dist}_N(x,x_0)$, and set $\widetilde{\varphi} \coloneqq \widetilde{t} \circ \varphi \colon \Omega(r) \to \mathbb{R}$ and $X \coloneqq \nabla \widetilde{\varphi}$. The goal here will be to find $r \leq \min \left\{\operatorname{inj}_N(x_0), \pi/2\sqrt{c} \right\}$, with the convention that ${\pi}/{2\sqrt{c}} = +\infty$ if $c \leq 0$, and that $\inf_{\Omega(r)} \operatorname{div} X > 0$. Under these assumptions, we may apply Theorem \ref{LimaSantosMontenegro} to the vector field $X$ in order to obtain the following lower bound for the $p$-fundamental tone of $\Omega(r)$.
\begin{equation}\label{ptoneestimateee}
     \lambda_{1,p}(\Omega(r)) \geq \left( \frac{\displaystyle \inf_{\Omega(r)}{\operatorname{div}(X)}}{p  \Vert X \Vert_{\infty}} \right)^p. 
\end{equation}

Let us initially consider an orthonormal basis 
$\{\nabla t, \partial/\partial \theta_1, \ldots, \partial/\partial \theta_{n-1}, \partial/\partial s\}$ 
of $T_{(q,s)}(N \times \mathbb{R})$, where 
$\{\nabla t, \partial/\partial \theta_1, \ldots, \partial/\partial \theta_{n-1}\}$ 
is an orthonormal basis of $T_q N$ in polar coordinates centered at $x_0$. 
Let $\{e_1, \ldots, e_m\}$ be an orthonormal basis of $T_x\Omega$, and write
\begin{equation}
    e_i = \alpha_i \cdot \nabla t_N + \beta_i \cdot \partial/\partial \theta_s + \sum_{j=1}^{n-1}\gamma_i^{j}\partial/\partial \theta_j.
\end{equation}
Where $\alpha_i$, $\beta_i$, and $\gamma_i^{\,j}$ are real numbers satisfying $\alpha_i^{2} + \beta_i^{2} + \sum_{j=1}^{n-1} (\gamma_i^{\,j})^{2} = 1$ for $i = 1,\ldots,m$. Note that $\operatorname{div}(X) = \triangle \widetilde{\varphi}$. Moreover, using the Jorge--Koutroufiotis formula \cite{JorgeKoutrofiotis1981} and identifying the orthonormal basis $\{e_1,\ldots,e_m\}$ of $T_x M$ with $\{d\varphi(e_1),\ldots,d\varphi(e_m)\}$ in $T_{\varphi(x)}(N \times \mathbb{R})$, it follows that
\begin{equation}\label{JKCCC}
    \triangle\Tilde{\varphi}(x) = \sum_{i=1}^m \operatorname{Hess} \, \Tilde{t}(\varphi(x))(e_i,e_i)
   + \left\langle \nabla \Tilde{t} , \sum_{i=1}^m \mathrm{I}\mathrm{I}(e_i,e_i)  \right\rangle .
\end{equation}
Thus, using the estimates given in equations (3.8) and (3.9) in \cite{Bessa-Silvana}, we obtain that
\begin{equation*}
    \sum_{i=1}^{m}\operatorname{Hess} \, \Tilde{t}(e_i,e_i) \geq \sum_{i=1}^{m}(1 -\alpha_i^2 -\beta_i^2)\frac{S'_c}{S_c}(r) \quad \text{and} \quad  \left\langle \nabla \Tilde{t} , \sum_{i=1}^m \mathrm{I}\mathrm{I}(e_i,e_i)  \right\rangle \geq -h(x_0,r)\sqrt{1 - \sum_{i=1}^{m}\alpha_i}.
\end{equation*}
Using the identity $\alpha_i^{2} + \beta_i^{2} + \sum_{j=1}^{n-1} (\gamma_i^{\,j})^{2} = 1$ in the first inequality above, adding the resulting expression to the right-hand side, and substituting it into \eqref{JKCCC}, we obtain the following estimate
\begin{equation*}
    \triangle\Tilde{\varphi} \geq (m-2)\dfrac{S'_c}{S_c}(r) - h(x_0,r)>0.
\end{equation*}
Therefore, for the vector field $X = \nabla \widetilde{\varphi}$, in both cases (1) and (2) considered above, the inequality \eqref{ptoneestimateee} (see the detailed argument in \cite[p. 8]{Bessa-Silvana}) yields the following estimate
\begin{equation*}
 \lambda_{1,p}(\Omega(r)) \geq    \frac{1}{p^{p}}
\left((m-2)\frac{S'_c}{S_c}(r)-h(x_0,r)\right)^{p} > 0.
\end{equation*}
as initially required.
\end{proof}

\subsection{Proofs of the \textit{p}-stability criteria}
\textbf{Corollary \ref{p-stability}} \textit{Assume the hypotheses of Theorem \ref{thm:pminsub_ball}.
Let $\varphi : M^m \hookrightarrow N^n$ be a minimal immersion and let 
$\Omega$ be a connected component of 
$\varphi^{-1}(B_N(p_0,r))$.
Fix $2 \leq p < \infty$ and set $\mathcal V(x) = \|A(x)\|^p$ where $A$ denotes the second fundamental form of the immersion. If
\begin{equation}\label{p-st}
\sup_{x \in \Omega} \|A(x)\|^p \leq k^{p-2} \cdot \lambda_{1,p}^{\mathcal D}(B(r)).
\end{equation}
Then $\Omega$ is $p$-stable with respect to the potential $\|A\|^p$.}

\begin{proof}
Let $\Omega \subset M$ be as in Theorem \ref{thm:pminsub_ball}.
For $2 \le p < \infty$. Consider the $p$-stability functional, given $u \in \mathcal{C}_0^\infty(\Omega)$
\begin{equation}\label{s1}
    Q_p(u) \coloneq \int_{\Omega} \Big( |\nabla u|^p - \|A\|^p |u|^p \Big).
\end{equation}
By the variational characterization of the first Dirichlet eigenvalue
of the $p$-Laplacian given in \eqref{eq:Rayleigh quotient}, we have
\[
 \lambda_{1,p}^{\mathcal{D}}(\Omega)
    = \inf \left\{ \frac{\displaystyle \int_{\Omega} |\nabla u |^p d\mu} {\displaystyle \int_{\Omega} |u|^p d\mu}; \, u \in \mathrm{W}^{1,p}_0(\Omega), \, \varphi \not\equiv 0
    \right\}.
\]
Therefore, by density argument (see \cite[Corollary A.4]{le}), for all  $u \in \mathcal{C}_0^\infty(\Omega)$
\begin{equation}\label{s2}
\int_{\Omega} |\nabla u|^p \geq \lambda_{1,p}(\Omega) \int_{\Omega} |u|^p. 
\end{equation}
Combining \eqref{s1} and \eqref{s2}, we obtain the estimate
\[
Q_p(u) \geq \big( \lambda_{1,p}(\Omega) - \sup_{\Omega} \|A\|^p\big)
\int_{\Omega} |u|^p.
\]
According to the hypotheses of Theorem \ref{thm:pminsub_ball}, we have $\lambda_{1,p}(\Omega) \geq k^{p-2} \cdot \lambda_{1,p}^{\mathcal D}(B(r))$. By the assumption given in \eqref{p-st}, $\sup_{x \in \Omega} \|A(x)\|^p \leq k^{p-2} \cdot \lambda_{1,p}^{\mathcal D}(B(r))$, and hence $\lambda_{1,p}(\Omega) - \sup_{\Omega} \|A\|^p \ge 0$. It follows that $Q_p(u) \ge 0$ for every $u \in C_0^\infty(\Omega)$, which proves the $p$-stability of $\Omega$ with respect to the potential $\|A\|^p$. 
\end{proof}
\medskip
\noindent
 \textbf{Corollary 1.5} \textit{Assume the hypotheses of Theorem \ref{thmlowerptone}. Let $\varphi \colon M \hookrightarrow \mathbb{R}^{n}$ be a minimal immersion into Euclidean space. Suppose that the second fundamental form $A$ in $\Omega_r$ satisfies
\begin{equation*}
    \|A\| \leq \frac{m-1}{pr}.
\end{equation*}
Then $\Omega_r$ is $p$-stable with respect to the potential $\|A\|^{p}$.}
\begin{proof}
Let $\Omega_r \subset M$ be as in Theorem \ref{thmlowerptone}.
For $2 \le p < \infty$. Consider the $p$-stability functional, given $u \in \mathcal{C}_0^\infty(\Omega)$
\begin{equation*}
    Q_p(u) \coloneq \int_{\Omega} \Big( |\nabla u|^p - \|A\|^p |u|^p \Big).
\end{equation*} 
It follows from equations \eqref{s1} and \eqref{s2} in Corollary \ref{p-stability} that the following inequality holds
\[
Q_p(u) \geq \big( \lambda_{1,p}(\Omega) - \sup_{\Omega} \|A\|^p\big)
\int_{\Omega} |u|^p.
\]
According to the hypotheses of Corollary \ref{corpstaby}, we have
\begin{equation*}
     \|A\|^{p} \leq \left( \frac{m-1}{p} \right)^{p} \cdot \frac{1}{r^p}.
\end{equation*}
By \eqref{lowestimateptone} in Theorem \ref{thmlowerptone}, we have $\lambda_{1,p}(\Omega) \ge \sup_{\Omega} \|A\|^{p}$. Hence, $\Omega_r$ is $p$-stable with respect to the potential $\|A\|^{p}$.
\end{proof}

\subsection{Proof of Theorem \ref{Thm:pKazdame}}

The result below can be interpreted within the framework of criticality theory for quasilinear elliptic operators. Indeed, the existence of a boundary blow-up solution to \eqref{intro:semilineareq} is intimately related to the sign of the principal Dirichlet eigenvalue of the operator $L_\Psi = -\triangle_p - \Psi$. Our theorem shows that the existence of such a solution forces the potential to lie below the principal eigenvalue, while the limiting case corresponds to a critical regime in which rigidity occurs and the potential must be constant. In this sense, the boundary blow-up problem detects the critical threshold of the operator.
\medskip

\noindent
\textbf{Theorem 1.8}
\textit{Let $M$ be a bounded Riemannian manifold with smooth boundary, and let 
$\Psi \in \mathcal{C}^0(\overline{M})$. Consider the problem
\begin{equation}\label{intro:semilineareq}
\begin{cases}
\triangle_p u - (p-1)|\nabla u|^p = \Psi & \text{in } M,\\
u = +\infty & \text{on } \partial M.
\end{cases}
\end{equation}
If \eqref{intro:semilineareq} admits a $\mathcal{C}^{1,\alpha}_{{\rm loc}}(M)$ solution, then $\inf_M \Psi \leq \lambda_{1,p}^{\mathcal{D}}(M)$. Moreover, if $\inf_M \Psi = \lambda_{1,p}^{\mathcal{D}}(M)$, then $\Psi \equiv \lambda_{1,p}^{\mathcal{D}}(M)$. Finally, if $\Psi \equiv \lambda$ is constant, then problem \eqref{intro:semilineareq} admits a solution if and only if $\lambda = \lambda_{1,p}^{\mathcal{D}}(M)$.}

\begin{proof} Let $(M,g)$ be a bounded Riemannian manifold with smooth and nonempty boundary.
We consider the following quasilinear problem with potential
\begin{equation}\label{semilineareq1}
\begin{cases}
\triangle_p \varphi + \Psi \varphi^{p-1} = 0 \quad & \text{in} \ \ M,\\
\varphi =0 & \text{on } \partial M,
\end{cases}
\end{equation}
where $p > 1$ and $\Psi \in \mathcal{C}^0(\overline{M})$. We begin by observing that problem \eqref{intro:semilineareq} admits a positive solution $v \in \mathcal{C}^{1,\alpha}_{\mathrm{loc}}(M)$ if and only if problem \eqref{semilineareq1} admits a positive solution $\varphi \in \mathcal{C}^{1,\alpha}(M)\cap \mathcal{C}^0(\overline{M})$. Following the approach of Kazdan--Kramer \cite{KazdanKramer}, we introduce the change of variables $v=-\log \varphi$. With this transformation, we obtain
\[
    \nabla v = -\frac{\nabla \varphi}{\varphi},
    \qquad 
    | \nabla v |^p = \frac{ |\nabla \varphi |^p}{|\varphi|^p}.
\]
Hence, we obtain the identity
    \begin{equation*}
    \triangle_p v 
    = \operatorname{div}( | \nabla v |^{p-2}\nabla v)
    = \operatorname{div}\!\left(
        -\frac{ |\nabla \varphi |^{p-2}\nabla \varphi}{\varphi^{p-1}}
      \right).
\end{equation*}
Thus, it follows that (see \cite[Remark 2.1]{LimaMontenegroSantos2010})
\begin{equation}\label{kazdamfonte}
    - \operatorname{div}\!\left( \frac{ |\nabla \varphi |^{p-2}\nabla \varphi}{\varphi^{p-1}}
      \right) =  (p-1) \dfrac{|\nabla \varphi |^p}{|\varphi|^p} - \frac{\triangle_p \varphi}{\varphi^{p-1}}.
\end{equation}
Therefore, we conclude that
\begin{equation*}
    \triangle_p v - (p-1)| \nabla v |^p 
    =  - \frac{\triangle_p \varphi}{\varphi^{p-1}}.
\end{equation*}
Moreover, since $\varphi \in \mathcal{C}^{1,\alpha}(M)$ and $\varphi|_{\partial M} \equiv 0$, it follows that $v \in \mathcal{C}^{1,\alpha}_{\mathrm{loc}}(M)$. Furthermore, we have that $\lim_{x \to \partial M} v(x) = +\infty$. Consequently, $v$ solves the $p$-Laplacian equation with gradient absorption term
\begin{equation}\label{intro:semilineareq2}
\begin{cases}
\triangle_p v - (p-1)|\nabla v|^p = \Psi & \text{in } M,\\
v = +\infty & \text{on } \partial M.
\end{cases}
\end{equation}
Thus, \eqref{intro:semilineareq2} is equivalent to \eqref{semilineareq1} under the transformation $\varphi = e^{-v}$. Since $\varphi \in \mathcal{C}^{1,\alpha}(M)$, $\triangle_p \varphi \in \mathcal{C}^{0}(M)$, and $\varphi>0$ in $M$, it follows that $\varphi$ is an admissible test function in Barta's inequality (see Theorem \ref{thm:pbarta}). Consequently, we obtain the following inequality
\begin{equation}\label{cadeia1}
\begin{aligned}
    \lambda_{1,p}^{\mathcal{D}}(M)  &\geq \inf_{M} \left\{ -\frac{\triangle_p \varphi}{\varphi^{p-1}}  \right\} \\
     &= \inf_{M} \left( \triangle_p v - (p-1)| \nabla v |^p \right) \\
    &= \inf_{M} \Psi.
\end{aligned}
\end{equation}
Since $\varphi$ is a positive solution of \eqref{semilineareq1}, it follows from \eqref{eqweak} that for every test function $\eta \in \mathcal{C}_0^{\infty}(M)$ we have
\begin{equation*}
\int_{M} \left\langle |\grad\varphi |^{p-2} \grad \varphi, \grad \eta \right\rangle d\mu =  \int_{M} \Psi(x) | \varphi |^{p-1} \eta d\mu.  
\end{equation*}
Choosing $\varphi = \eta$ and using the fact that $\Psi \in \mathcal{C}^0(\overline{M})$ we obtain the following inequality
\begin{equation*}
\int_{M}  |\grad\varphi |^{p} d\mu =  \int_{M} \Psi(x) | \varphi |^{p} d\mu \leq \sup_{x \in \overline{M}}  \Psi(x) \int_{M}  | \varphi |^{p} d\mu.
\end{equation*}
Therefore, since $\varphi \in W^{1,p}_0(M)$ we can use the variational characterization given in \eqref{eq:Rayleigh quotient} of the $p$-fundamental tone to obtain
\begin{equation}\label{Rayleigh quotient}
    \lambda_{1,p}^{\mathcal{D}}(M)
    \leq \frac{\displaystyle \int_{M} |\nabla \varphi  |^p d\mu} {\displaystyle \int_{M} |\varphi|^p d\mu} \leq \sup_{x \in \overline{M}}  \Psi(x).
\end{equation}
Finally, suppose that $\inf_{M} \Psi = \lambda_{1,p}^{\mathcal{D}}(M)$. Then equality must occur in \eqref{cadeia1}
\begin{equation*}
      \lambda_{1,p}^{\mathcal{D}}(M) = \inf_{M} \left\{ -\frac{\triangle_p \varphi}{\varphi^{p-1}}  \right\}.
\end{equation*}
In this case, it follows from Theorem \ref{thm:pbarta} that $\varphi$ is a scalar multiple of the first Dirichlet $p$-eigenfunction. Since $\varphi|_{\partial M}=0$ and in view of the $(p-1)$-homogeneity of the $p$-Laplacian, we conclude that $\varphi$ satisfies the following equation
\begin{equation}\label{varphi}
\begin{cases}
\triangle_p \varphi = -\lambda_{1,p}^{\mathcal{D}}(M)|\varphi|^{p-1} 
& \text{in} \ M,\\
\varphi = 0 
& \text{on} \ \partial M. 
\end{cases}
\end{equation}
Therefore, $\varphi$ satisfies both \eqref{semilineareq1} and \eqref{varphi}. It follows that, for every $x \in M$, one has $\Psi(x) - \lambda_{1,p}^{\mathcal{D}}(M) = 0$. By continuity, we conclude that $\Psi(x) = \lambda_{1,p}^{\mathcal{D}}(M)$ on $\overline{M}$. Finally, suppose that there exists a constant $\lambda$ such that $\Psi(x)=\lambda$ for all $x \in M$. If \eqref{semilineareq1} admits a solution, then it follows from \eqref{cadeia1} and \eqref{Rayleigh quotient} that
\[
\inf_M \Psi \le \lambda_{1,p}^{\mathcal{D}}(M) \le \sup_M \Psi.
\]
Hence, $\lambda = \lambda_{1,p}^{\mathcal{D}}(M)$. The converse implication is immediate.
\end{proof}

\section*{Acknowledgements}

The author would like to express his sincere gratitude to G. Pacelli Bessa for insightful discussions that significantly enriched this work. He also thanks L. Petrola Jorge for valuable notes concerning the $p$-Laplacian. In addition, the author gratefully acknowledges Vicent Gimeno and Vicent Palmer for their warm hospitality at the Universitat Jaume I during the doctoral research stay. This work was partially supported by CAPES--Brazil under Grant No. 88881.126989/2025-01.

\end{document}